\title{What precisely are $E_{\infty}$ ring spaces and $E_{\infty}$ ring spectra?}
\author{J\,P May}
\address{Department of Mathematics\\
The University of Chicago\\
Chicago, Illinois 60637}
\email{may@math.uchicago.edu}
\urladdr{http://www.math.uchicago.edu/~may}
\newtheorem{thm}{Theorem}[section]
\newtheorem{cor}[thm]{Corollary}
\newtheorem{prop}[thm]{Proposition}
\newtheorem{lem}[thm]{Lemma}
\theoremstyle{definition}
\newtheorem{defn}[thm]{Definition}
\newtheorem{step}[thm]{Step}
\theoremstyle{remark}
\newtheorem{rem}[thm]{Remark}
\let\c@equation\c@thm
\numberwithin{equation}{section}
\DeclareFontFamily{OMS}{rsfs}{\skewchar\font'60}
\DeclareFontShape{OMS}{rsfs}{m}{n}{<-5>rsfs5 <5-7>rsfs7 <7->rsfs10 }{}
\DeclareSymbolFont{rsfs}{OMS}{rsfs}{m}{n}
\DeclareSymbolFontAlphabet{\scr}{rsfs}
\newcommand{\sA}{\scr{A}}
\newcommand{\sC}{\scr{C}}
\newcommand{\sD}{\scr{D}}
\newcommand{\sF}{\scr{F}}
\newcommand{\sG}{\scr{G}}
\newcommand{\sI}{\scr{I}}
\newcommand{\sK}{\scr{K}}
\newcommand{\sL}{\scr{L}}
\newcommand{\sM}{\scr{M}}
\newcommand{\sN}{\scr{N}}
\newcommand{\sO}{\scr{O}}
\newcommand{\sP}{\scr{P}}
\newcommand{\sS}{\scr{S}}
\newcommand{\sT}{\scr{T}}
\newcommand{\sU}{\scr{U}}
\newcommand{\sV}{\scr{V}}
\newcommand{\sW}{\scr{W}}
\newcommand{\bC}{\mathbb{C}}
\newcommand{\bE}{\mathbb{E}}
\newcommand{\bF}{\mathbb{F}}
\newcommand{\bL}{\mathbb{L}}
\newcommand{\bN}{\mathbb{N}}
\newcommand{\bP}{\mathbb{P}}
\newcommand{\bR}{\mathbb{R}}
\newcommand{\bU}{\mathbb{U}}
\newcommand{\bZ}{\mathbb{Z}}
\newcommand{\al}{\alpha}
\newcommand{\be}{\beta}
\newcommand{\ga}{\gamma}
\newcommand{\de}{\delta}
\newcommand{\epz}{\varepsilon}
\newcommand{\ph}{\phi}
\newcommand{\et}{\eta}
\newcommand{\io}{\iota}
\newcommand{\la}{\lambda}
\newcommand{\tha}{\theta}
\newcommand{\rh}{\rho}
\newcommand{\si}{\sigma}
\newcommand{\ta}{\tau}
\newcommand{\ch}{\chi}
\newcommand{\ze}{\zeta}
\newcommand{\om}{\omega}
\newcommand{\GA}{\Gamma}
\newcommand{\LA}{\Lambda}
\newcommand{\DE}{\Delta}
\newcommand{\SI}{\Sigma}
\newcommand{\THA}{\Theta}
\newcommand{\OM}{\Omega}
\newcommand{\XI}{\Xi}
\newcommand{\PS}{\Psi}
\newcommand{\PH}{\Phi}
\newcommand{\com}{\circ}     
\newcommand{\iso}{\cong}     
\newcommand{\htp}{\simeq}    
\newcommand{\ten}{\otimes}   
\newcommand{\thp}{\ltimes}   
\newcommand{\sma}{\wedge}    
\newcommand{\wed}{\vee}      
\newcommand{\rtarr}{\longrightarrow}
\def\quickop#1{\expandafter\newcommand\csname #1\endcsname{\operatorname{#1}}}
\begin{document}

\begin{abstract} $E_{\infty}$ ring spectra were defined in 1972,
but the term has since acquired several alternative meanings.
The same is true of several related terms.  The new formulations 
are not always known to be 
equivalent to the old ones and even when they are, the notion of 
``equivalence'' needs discussion: Quillen equivalent categories 
can be quite seriously inequivalent.  Part of the confusion stems 
from a gap in the modern resurgence of interest in $E_{\infty}$ 
structures. $E_{\infty}$ ring spaces were also defined in 1972 and 
have never been redefined.  They were central to the early 
applications and they tie in implicitly to modern applications.  
We summarize the relationships between the old notions and various 
new ones, explaining what is and is not known. We take the opportunity 
to rework and modernize many of the early results.  New proofs and
perspectives are sprinkled throughout.
\end{abstract}

\maketitle

\tableofcontents

\section*{Introduction}

In the early 1970's, the theory of $E_{\infty}$ rings was intrinsically intertwined with a host of constructions and calculations that centered around the relationship between $E_{\infty}$ ring spectra and $E_{\infty}$ ring spaces 
\cite{CLM, MQR}.  The two notions were regarded as being on essentially the 
same footing, and it was understood that the homotopy categories of ringlike 
$E_{\infty}$ ring spaces ($\pi_0$ is a ring and not just a semi-ring) and of 
connective $E_{\infty}$ ring spectra are equivalent.  

In the mid 1990's, modern closed symmetric monoidal categories of spectra were introduced, allowing one to define a commutative ring spectrum to be a commutative monoid in any such good category of spectra.  The study of 
such rings is now central to stable homotopy theory. Work of several people, especially Schwede and Shipley, shows that, up to zigzags of Quillen equivalences, the resulting categories of commutative ring spectra are all equivalent. In one of these good categories, commutative ring spectra are
equivalent to $E_{\infty}$ ring spectra.  The terms $E_{\infty}$ ring spectra 
and commutative ring spectra have therefore been used as synonyms in recent
years.  A variant notion of $E_{\infty}$ ring spectrum that can be defined
in any such good category of spectra has also been given the same name.

From the point of view of stable homotopy theory, this is perfectly
acceptable, since these notions are tied together by a web of Quillen
equivalences.  From the point of view of homotopy theory as a whole,
including both space and spectrum level structures, it is not acceptable.  
Some of the Quillen equivalences in sight necessarily lose space level
information, and in particular lose the original connection between 
$E_{\infty}$ ring spectra and $E_{\infty}$ ring spaces. Since some modern applications, 
especially those connected with cohomological orientations 
and spectra of units, are best understood in terms of that connection, it 
seems to me that it might be helpful to offer a thorough survey of the structures in this general 
area of mathematics.  

This will raise some
questions.  As we shall see, some new constructions are not at present 
known to be equivalent, in any sense, to older constructions of objects 
with the same name, and one certainly cannot deduce comparisons formally.
It should also correct some misconceptions.  In some cases, an old name 
has been reappropriated for a definitely inequivalent concept. 

The paper divides conceptually into two parts. First, in \S\S 1--10, we 
describe and modernize additive and multiplicative infinite loop space theory.  
Second, in \S\S11--13, we explain how this early 1970's work fits into the 
modern framework of symmetric monoidal categories of spectra.  There will be
two sequels \cite{Sequel, Sequel2}. In the first, we recall how to construct
$E_{\infty}$ ring spaces from bipermutative categories. In the second, we
review some of the early applications of $E_{\infty}$ ring spaces. 

We begin by defining $E_{\infty}$ ring spaces. As we shall see in \S\ref{ringspace},
this is really quite easy.  The hard part is to produce examples, and that problem will
be addressed in \cite{Sequel}.  The definition requires a pair 
$(\sC,\sG)$ of $E_{\infty}$ operads, with $\sG$ acting in a 
suitable way on $\sC$, and $E_{\infty}$ ring spaces might better be called 
$(\sC,\sG)$-spaces.  It is a truism taken for granted since \cite{Geo} 
that all $E_{\infty}$ operads are suitably equivalent.  However, for 
$E_{\infty}$ ring theory, that is quite false.  The precise geometry 
matters, and we must insist that not all $E_{\infty}$ operads are alike. 
The operad $\sC$ is thought of as additive, and the operad $\sG$ is thought 
of as multiplicative.\footnote{As we recall in \S\S\ref{isoop},\ref{machines},
in many applications of additive infinite loop space theory, we must actually start 
with $\sG$, thinking of it as additive, and convert $\sG$-spaces to $\sC$-spaces 
before proceeding.}  

There is a standard canonical multiplicative operad $\sL$,
namely the linear isometries operad.  We recall it and related structures that
were the starting point of this area of mathematics in \S\ref{isoop}.  
In our
original theory, we often replaced $\sL$ by an operad $\sO\times\sL$, and we 
prefer to use a generic letter $\sG$ for an operad thought of as appropriate 
to the multiplicative role in the definition of $E_{\infty}$ ring spaces.  
The original definition of $E_{\infty}$ ring spaces was obscured because 
the canonical additive operad $\sC$ that was needed for a clean description was only 
discovered later, by Steiner \cite{St}.  We recall its definition and its relationship
to $\sL$ in \S\ref{Steiner}.  This gives us the canonical $E_{\infty}$ operad pair $(\sC,\sL)$.  

Actions by operads are equivalent to actions by an associated monad.
As we explain in \S\ref{monad}, that remains true for actions by operad pairs.
That is, just as $E_{\infty}$ spaces can be described as algebras over
a monad, so also $E_{\infty}$ ring spaces can be described as algebras
over a monad. In fact, the monadic version of the definition fits into a beautiful categorical way of thinking about distributivity that was first discovered by Beck \cite{Beck}.  This helps make the definition feel
definitively right.  

As we also explain in \S\ref{monad}, {\em different} monads can
have {\em the same} categories of algebras.  This has been known for years, 
but it is a new observation that this fact can be used to substantially 
simplify the mathematics.  In the sequel \cite{Sequel}, we will use this idea to give 
an elementary construction of $E_{\infty}$ ring spaces from bipermutative categories 
(and more general input data). We elaborate on this categorical observation and
related facts about maps of monads in Appendix A (\S14), which
is written jointly with Michael Shulman.

The early 1970's definition \cite{MQR} of an $E_{\infty}$ ring spectrum was 
also obscure, this time because the notion of ``twisted half-smash
product'' that allows a clean description was only introduced
later, in \cite{LMS}.  The latter notion encapsulates operadically
parametrized internalizations of external smash products.  As we 
recall in \S\ref{ringspec}, $E_{\infty}$ ring spectra are spectra in 
the sense of \cite{LMS, MayOld}, which we shall sometimes call LMS 
spectra for definiteness, with additional structure.  Just as $E_{\infty}$
spaces can be described in several ways as algebras over a monad, so also
$E_{\infty}$ ring spectra can be described in several ways as algebras over
a monad.  We explain this and relate the space and spectrum level monads
in \S\ref{monad2}. 

There is a $0^{th}$ space functor $\OM^{\infty}$ from spectra to 
spaces,\footnote{Unfortunately for current readability, 
in \cite{MQR} the notation 
$\SI^{\infty}$ was used for the suspension {\em prespectrum} functor, the notation $\OM^{\infty}$ was used for the spectrification functor that has 
been denoted by $L$ ever since \cite{LMS}, and the notation 
$Q_{\infty} = \OM^{\infty}\SI^{\infty}$ was used for the current 
$\SI^{\infty}$.}
which is right  adjoint to the suspension spectrum 
functor $\SI^{\infty}$.   A central
feature of the definitions, both conceptually and calculationally, is
that the $0^{th}$ space $R_0$ of an $E_{\infty}$ ring spectrum $R$ is an 
$E_{\infty}$ ring space.  Moreover, the space $GL_1R$ of unit components
in $R_0$ and the component $SL_1R$ of the identity are $E_{\infty}$-spaces,
specifically $\sL$-spaces.\footnote{$GL_1R$ and $SL_1R$ were called $FR$ and
$SFR$ when they were introduced in \cite{MQR}.  These spaces played a major 
role in that book, as we will explain in the second sequel \cite{Sequel2}. 
As we also explain there, $F$ and $GL_1S$ are both tautologically the same and
very different. The currently popular notations follow Waldhausen's later 
introduction \cite{Wald} of the higher analogues $GL_n(R)$.}  We shall say more 
about these spaces in \S\S\ref{spspcompare},\ref{machines},\ref{unitspec}.  

There is also a functor from $E_{\infty}$ ring spaces to $E_{\infty}$ ring
spectra.  This is the point of multiplicative infinite loop space theory 
\cite{MQR, Mult}.  Together with the $0^{th}$ space functor, it gives the 
claimed equivalence between the homotopy categories of ringlike $E_{\infty}$ 
ring spaces and of connective $E_{\infty}$ ring spectra.  We recall this 
in \S\ref{machines}.

The state of the art around 1975 was summarized in \cite{Bull}, and it
may help orient \S\S 1--10 of this paper to reproduce the diagram that survey focused on.
Many of the applications alluded to above are also summarized in \cite{Bull}. 
The abbreviations at the top of the diagram refer to permutative categories and 
bipermutative categories. We will recall and rework how the latter fit into 
multiplicative infinite loop space theory in the sequel \cite{Sequel}.  

\vspace{3mm}

{

\begin{equation}\label{DIAG}
\xymatrix{
& \text{PERM\ CATS}\ar[dd]_{B} & & \text{BIPERM\ CATS} \ar[dd]^{B} & \\
&&&& \\
& E_{\infty}\ \text{SPACES}\ar@/^2.2pc/[ddr] 
& & E_{\infty} \ \text{RING\ SPACES} \ar@/_2.2pc/[ddl] & \\
&&&&\\
& \text{SPACES} \ar[uu]_{C} \ar[dd]^{\SI^{\infty}}  &  
*+[F]{\text{BLACK \ BOX}} 
\ar@/^2.2pc/[ddl] \ar@/_2.2pc/[ddr]  
& E_{\infty}\ \text{SPACES} \ar[uu]^{C} \ar[dd]_{\SI^{\infty}}\\
&&&&\\
& \text{SPECTRA}\ar@/^5.4pc/[uuuu]^{\OM^{\infty}} & & E_{\infty}\ \text{RING\ SPECTRA} 
\ar@/_5.4pc/[uuuu]_{\OM^{\infty}} & \\}
\end{equation}

}

Passage through the black box is the subject of additive infinite loop
space theory on the left and multiplicative infinite loop space theory
on the right.  These provide functors from $E_{\infty}$ spaces to 
spectra and from $E_{\infty}$ ring spaces to $E_{\infty}$ ring spectra.
We have written a single black box because the multiplicative functor
is an enriched specialization of the additive one.  The black box gives
a recognition principle: it tells us how to recognize spectrum level objects
on the space level. 

We give a modernized description of these functors in \S\ref{machines}.
My early 1970's work was then viewed as ``too categorical'' by older algebraic
topologists.\footnote{Sad to say, nearly all of the older people active 
then are now retired or dead.}  
In retrospect, 
it was not nearly categorical enough for intuitive conceptual understanding.  
In the expectation that I am addressing a more categorically sophisticated 
modern audience, I explain in \S{\ref{phil} how the theory is based on an 
analogy with the Beck monadicity theorem.  One key result, a commutation 
relation between taking loops and applying the additive infinite loop space machine, 
was obscure in my earlier work, and I'll give a new proof in Appendix B (\S15).

The diagram above obscures an essential technical point.  The two entries 
``$E_{\infty}$ spaces'' are different.  The one on the upper left refers to 
spaces with actions by the additive $E_{\infty}$ operad $\sC$, and spaces 
there mean based spaces with basepoint the unit for the additive operadic product.  
The one on the right refers to spaces with actions by the multiplicative $E_{\infty}$ 
operad $\sG$, and spaces there mean spaces with an operadic unit point $1$ and a disjoint 
added basepoint $0$.  The functor $C$ 
is the free $\sC$-space functor, and it takes $\sG$-spaces with $0$ to 
$E_{\infty}$ ring spaces.  This is a key to understanding the various adjunctions 
hidden in the diagram.  The functors labelled $C$ and $\SI^{\infty}$ are left adjoints.

The unit $E_{\infty}$ spaces $GL_1R$ and $SL_1R$ of an $E_{\infty}$ ring spectrum $R$
can be fed into the additive infinite loop space machine to produce associated spectra
$gl_1R$ and $sl_1R$.  There is much current interest in understanding their structure.
As we recall in \S\ref{unitspec}, one can exploit the interrelationship between the 
additive and multiplicative structures to obtain a general theorem that describes the
localizations of $sl_1R$ at sets of primes in terms of purely multiplicative structure.
The calculational force of the result comes from applications to spectra arising from
bipermutative categories, as we recall and illustrate in the second sequel \cite{Sequel2}.
The reader may prefer to skip this section on a first reading, since it is not essential
to the main line of development, but it gives a good illustration of information about
spectra of current interest that only makes sense in terms of $E_{\infty}$ ring spaces.

Turning to the second part, we now jump ahead more than twenty years. In the 1990's, 
several categories of spectra that are symmetric monoidal under their smash product were 
introduced.  This allows the definition of commutative ring spectra as
commutative monoids in a symmetric monoidal category of spectra.   Anybody 
who has read this far knows that the resulting theory of ``stable commutative
topological rings'' has become one of the central areas of study in modern algebraic topology. 
No matter how such a modern category of spectra is constructed, the essential point is that 
there is some kind of external smash product in sight, which is commutative and associative 
in an external sense, 
and the problem that must be resolved is to figure out how to internalize it without 
losing commutativity and associativity.  

Starting from twisted half-smash products, this internalization is carried out 
in EKMM \cite{EKMM}, where the symmetric monoidal category
of $S$-modules is constructed.  We 
summarize some of the relevant
theory in \S\ref{EKMM}. Because the construction there
starts with twisted half-smash products, the resulting commutative
ring spectra are almost the same as $E_{\infty}$ ring spectra.  The
``almost'' is an important caveat. We didn't mention the unit condition
in the previous paragraph, and that plays an important and subtle role 
in \cite{EKMM} and in the comparisons we shall make.  As
Lewis noted \cite{Lewis} and we will rephrase, one cannot have a symmetric 
monoidal category of spectra that is as nicely related to spaces as one 
would ideally like.  The reason this is so stems from an old result of
Moore, which says that a connected commutative topological monoid is
a product of Eilenberg--Mac\,Lane spaces.  

In diagram spectra, in particular symmetric spectra and orthogonal
spectra \cite{HSS, MMSS}, the internalization is entirely different.
Application of the elementary categorical notion of left Kan extension 
replaces the introduction of the twisted half-smash product, and there
is no use of operads. However, there is a series of papers, primarily
due to Schwede and Shipley \cite{MM, MMSS, Schw, SS, Ship}, that lead to
the striking conclusion that all reasonable categories of spectra that 
are symmetric monoidal and have sensible Quillen model structures are 
Quillen equivalent.  Moreover, if one restricts to the commutative monoids, 
alias commutative ring spectra, in these categories, we again obtain Quillen 
equivalent model categories. 

Nevertheless, as we try to make clear 
in \S\ref{MMSS}, these last Quillen equivalences {\em lose essential information}.  
On the diagram spectrum side, one must throw
away any information about $0^{th}$ spaces in order to obtain the Quillen
equivalence with EKMM style commutative ring spectra.  In effect, this 
means that diagram ring spectra do not know about $E_{\infty}$ ring spaces
and cannot be used to recover the original space level results
that were based on implications of that structure.

Philosophically, one conclusion is that fundamentally important homotopical
information can be accessible to one and inaccessible to the other of a pair 
of Quillen equivalent model categories, contrary to current received
wisdom.  The homotopy categories of connective
commutative symmetric ring spectra and of ringlike $E_{\infty}$ ring spaces
are equivalent, but it seems impossible to know this without going through
the homotopy category of $E_{\infty}$ ring spectra, as originally defined.

We hasten to be clear.  It does not follow that $S$-modules are
``better'' than symmetric or orthogonal spectra.  There is by now a huge literature 
manifesting just how convenient, and in some contexts essential, diagram spectra 
are.\footnote{I've contributed to this in collaboration with Mandell, Schwede, Shipley,
and Sigurdsson
\cite{MM, MMSS, MS}.} Rather, it does follow that to have access to the full 
panoply of information and techniques this subject affords, one simply must 
be eclectic. To use either approach alone is to approach modern stable
homotopy theory with blinders on.

A little parenthetically, there is also a quite different alternative notion of a 
``naive $E_{\infty}$ ring spectrum'' (that is meant as a technical term, not a 
pejorative). For that,
one starts with internal iterated smash products and uses tensors
with spaces to define actions by an $E_{\infty}$ operad.  This 
makes sense in any good modern category of spectra, and the geometric 
distinction between different choices of $E_{\infty}$ operad is irrelevant.  
Most such categories of spectra do not know the difference between
symmetric powers $E^{(j)}/\SI_j$ and homotopy symmetric powers
$(E\SI_j)_+\sma_{\SI_j}E^{(j)}$, and naive $E_{\infty}$ ring 
spectra in such a good modern category of spectra are naturally 
equivalent to commutative ring spectra in that category, 
as we explain in \S\ref{naive}.  

This summary raises some important compatibility questions.  For example, 
there is a construction, due to Schlichtkrull \cite{Sch}, of unit spectra 
associated to commutative symmetric ring spectra.  It is based on the use 
of certain diagrams of spaces that are implicit in the structure of symmetric spectra.  
It is unclear that these unit spectra are equivalent to those 
that we obtain from the $0^{th}$ space of an  ``equivalent'' 
$E_{\infty}$ ring spectrum. Thus we now have two constructions, not known 
to be equivalent,\footnote{Since I wrote that, John Lind (at 
Chicago) has obtained an illuminating proof that they are.}  
of objects bearing the same name. Similarly, there is a construction of (naive) $E_{\infty}$ symmetric ring spectra associated to oplax bipermutative categories (which are not equivalent to bipermutative categories as originally defined) that is due 
to Elmendorf and Mandell \cite{EM}.  It is again not known whether or not their construction (at least when specialized to genuine bipermutative categories) gives symmetric ring spectra that are ``equivalent'' to the $E_{\infty}$ ring spectra that are constructed from bipermutative categories via our black box.  Again, we have two constructions that are not known to be equivalent, both thought of as giving the $K$-theory commutative ring spectra associated to bipermutative categories.

Answers to such questions are important if one wants to make consistent use of the alternative constructions, especially since the earlier constructions are 
part of a web of calculations that appear to be inaccessible with the newer constructions. The constructions of \cite{Sch} and \cite{EM} bear no relationship to $E_{\infty}$ ring spaces as they stand and therefore
cannot be used to retrieve the earlier applications or to achieve analogous
future applications.  However, the new constructions have significant advantages as well as significant disadvantages.  Rigorous comparisons are needed. We must be consistent as well as eclectic.  There is work to be done!

For background, Thomason and I proved in \cite{MT}, that any two 
infinite loop space machines (the additive black box in Diagram (\ref{DIAG})) are equivalent.  The proof broke into two quite different steps.  In the first, we compared input data. We showed that Segal's input data (special $\Gamma$-spaces) and 
the operadic input data of Boardman and Vogt and myself ($E_{\infty}$ spaces)
are each equivalent to a more general kind of input data, namely an action of the category of operators $\hat{\sC}$ associated to any chosen $E_{\infty}$ operad $\sC$.  We then showed that any two functors $\bE$ from $\hat{\sC}$-spaces to spectra that satisfy a group completion property 
on the $0^{th}$ space level are equivalent.  This property says that there 
is a natural group completion map $\eta\colon X\rtarr \bE_0X$, and we will
sketch how that property appears in one infinite loop space machine
in \S\ref{machines}.

No such uniqueness result is known for multiplicative infinite loop space
theory.  As we explain in \cite{Sequel}, variant notions of bipermutative
categories give possible choices of input data that are definitely
inequivalent.  There are also equivalent but inequivalent choices of output data, 
as I hope the discussion above makes clear.  We might take as target
any good modern category of commutative ring spectra and then, thinking
purely stably, all choices are equivalent.  However, the essential feature 
of \cite{MT} was the compatibility statement on the $0^{th}$ space level.  
There were no problematic choices since the correct homotopical notion of
spectrum is unambiguous, as is the correct homotopical relationship 
between spectra and their $0^{th}$ spaces (of fibrant approximations model categorically).  
As we have indicated, understanding multiplicative infinite loop space theory on the 
$0^{th}$ space level depends heavily on choosing the appropriate target 
category.\footnote{In fact, this point was already emphasized in the 
introduction of \cite{MT}.}

With the black box that makes sense of Diagram (\ref{DIAG}), 
there are stronger comparisons of input data and $0^{th}$ spaces 
than the axiomatization prescribes.  Modulo the inversion of a
natural homotopy equivalence, the map $\eta$ is a map
of $E_{\infty}$ spaces in the additive theory and a map of
$E_{\infty}$ ring spaces in the multiplicative theory.  This property
is central to all of the applications of \cite{CLM, MQR}.  For example,
it is crucial to the analysis of localizations of spectra of units in 
\S\ref{unitspec}.

This paper contains relatively little that is technically new, although
there are many new perspectives and many improved arguments.  It is  intended 
to give an overview of the global structure of this general area of mathematics, 
explaining the ideas in a context uncluttered by technical details.  It is a 
real pleasure to see how many terrific young mathematicians are interested 
in the theory of structured ring spectra, and my primary purpose is to help 
explain to them what was known in the early stages of the theory and how it 
relates to the current state of the art, in hopes that this might 
help them see connections with things they are working on now. Such a 
retelling of an old story seems especially needed since notations, 
definitions, and emphases have drifted over the years and their are some
current gaps in our understanding.  

Another
reason for writing this is that I plan to rework complete
details of the analogous equivariant story, a tale known decades ago but
never written down.  Without a more up-to-date 
nonequivariant blueprint, that story would likely be quite unreadable.   
The equivariant version will (or, less optimistically, may) give 
full details that will supersede those in the 1970's sources.

\vspace{2mm}

I'd like to thank the organizers of the Banff conference, 
Andy Baker and Birgit Richter, who are entirely to blame 
for the existence of this paper and its sequels.  They scheduled me for an 
open-ended evening closing talk, asking me to talk about the 
early theory.  They then provided the audience with enough 
to drink to help alleviate the resulting boredom.  This work 
began with preparation for that talk.  I'd also like to thank
John Lind and an eagle-eyed anonymous referee for catching numerous misprints
and thereby sparing the reader much possible confusion.

\section{The definition of $E_{\infty}$ ring spaces}\label{ringspace}

We outline the definition of $E_{\infty}$ spaces and $E_{\infty}$ ring
spaces.  We will be careful about basepoints throughout, since that 
is a key tricky point and the understanding here will lead to a streamlined
passage from alternative inputs, such as bipermutative categories, to 
$E_{\infty}$ ring spaces in \cite{Sequel}.  Aside from that, we focus on the intuition and refer the reader to \cite{Geo, MQR, Mult} for the combinatorial details.  Let $\sU$ denote the category of (compactly generated) unbased spaces and $\sT$ denote the category of based spaces.  We tacitly 
assume that based spaces $X$ have nondegenerate basepoints, or are well-based, which means that 
$\ast\rtarr X$ is a cofibration.

We assume that the reader is familiar with the definition of an operad.
The original definition, and our focus here, is on operads in $\sU$, as in 
\cite[p.\,1--3] {Geo}, but the definition applies equally well to define operads in any 
symmetric monoidal category \cite{MayDef, MayDef2}.  As in 
\cite{Geo}, we insist that the operad $\sO$ be reduced, in the sense that $\sO(0)$ is 
a point $\ast$.  This is important 
to the handling of basepoints.  Recall
that there is an element $\id\in \sO(1)$ that corresponds 
to the identity operation\footnote{The notation $1$ is 
standard in the literature, but that would obscure things for us later.} 
and that the 
$j$-th space $\sO(j)$ has a right action 
of the symmetric group $\SI_j$. There are structure maps 
\[\ga\colon \sO(k)\times \sO(j_1)\times \cdots \times \sO(j_k)\rtarr 
\sO(j_1+\cdots + j_k)\]
that are suitably equivariant, unital, and associative. We say that $\sO$ 
is an $E_{\infty}$ operad if $\sO(j)$ is contractible and $\SI_j$ acts freely.  

The precise details of the definition are dictated by looking at the structure present on the  endomorphism operad $\text{End}_X$ of a based space $X$.  This actually has two variants, $\text{End}_X^{\sT}$ and $\text{End}_X^{\sU}$, depending on whether or not we restrict to based maps.  The default will be 
$\text{End}_X = \text{End}_X^{\sT}$. The $j^{th}$ space $\text{End}_X(j)$ is the space of (based) maps $X^j\rtarr X$ and
\[ \ga(g;f_1,\cdots, f_k) = g\com (f_1\times \cdots \times f_k).\]
We interpret $X^0$ to be a point,\footnote{This is reasonable since the product of the empty set of objects is the terminal object.} and $\text{End}_X(0)$ is the map given by the 
inclusion of the basepoint.  Of course, $\text{End}^{\sU}_X(0) = X$, so
the operad $\text{End}^{\sU}_X$ is not reduced.

An action $\tha$ of $\sO$ on $X$ is a map of operads
$\sO\rtarr \text{End}_X$.  Adjointly, it is given by suitably 
equivariant, unital, and associative action maps
\[ \tha\colon \sO(j)\times X^j\rtarr X. \]
We think of $\sO(j)$ as parametrizing a $j$-fold product operation on $X$.  
The basepoint of $X$ must be $\tha(\ast)$, and there are two ways of thinking
about it.  We can start with an unbased space $X$, and then $\tha(\ast)$ gives
it a basepoint, fixing a point in $\text{End}^{\sU}_X(0)$, or we can think of the basepoint as preassigned, and then $\tha(\ast) = \ast$ is required. With 
$j_1=\cdots=j_k=0$, the compatibility of $\tha$ with the structure maps 
$\ga$ ensures that a map of operads $\sO\rtarr \text{End}_X^{\sU}$ 
necessarily lands in $\text{End}_X = \text{End}_X^{\sT}$. 

Now consider a pair $(\sC,\sG)$ of operads.  Write $\sC(0)=\{0\}$ 
and $\sG(0)=\{1\}$.  An action of $\sG$ on $\sC$ consists of maps
\[ \la\colon \sG(k)\times \sC(j_1)\times \cdots \times \sC(j_k)
\rtarr \sC(j_1\cdots j_k)  \]
for $k\geq 0$ and $j_i\geq 0$ that satisfy certain equivariance, unit, and 
distributivity properties; see \cite[p. 142-144]{MQR}, \cite[p. 8-9]{Mult},
or the sequel \cite[4.2]{Sequel}.    
We will give an alternative perspective in \S\ref{monad} that dictates
the details. To deal with basepoints, we interpret the empty product of 
numbers to be $1$ and, with $k=0$, we require that $\la(1) = \id\in \sC(1)$.
We think of $\sC$ as parametrizing addition and $\sG$ as parametrizing multiplication.  For example, we have an
operad $\sN$ such that $\sN(j) =\ast$ for all $j$. An $\sN$-space is precisely
a commutative monoid.  There is one and only one way $\sN$
can act on itself, and an $(\sN,\sN)$ space is precisely a 
commutative topological semi-ring or ``rig space'', a ring without 
negatives.  We say that $(\sC,\sG)$ is an $E_{\infty}$ operad pair
if $\sC$ and $\sG$ are both $E_{\infty}$ operads.  We give a canonical
example in \S\ref{Steiner}.

Of course, a rig space $X$ must have additive and multiplicative unit
elements $0$ and $1$,
and they must be different for non-triviality.  It is convenient 
to think of $S^0$ as $\{0,1\}$, so that these points are encoded by a map 
$e\colon S^0\rtarr X$.  In \cite{MQR,Mult}, we thought of both of these
points as ``basepoints''.  Here we only think of $0$ as a basepoint. 
This sounds like a trivial distinction, but it leads to a significant change 
of perspective when we pass from operads to monads in \S\ref{monad}.  We let 
$\sT_e$ denote the category of spaces $X$ together with a map 
$e\colon S^0\rtarr X$.  That is, it is the category of spaces under $S^0$. 

One would like to say that we have an endomorphism operad pair such that an action of an operad
pair is a map of operad pairs, but that is not quite how things work.  Rather, an action of $(\sC,\sG)$ on $X$ consists of an action $\tha$ of $\sC$ on $(X,0)$ and an action $\xi$ of $\sG$ on $(X,1)$ for which $0$ is a strict zero, so
that $\xi(g;y) = 0$ if any coordinate of $y$ is $0$, and for which the parametrized version of the left distributivity law holds.  In a 
rig space $X$, for variables $(x_{r,1},\cdots,x_{r,j_r})\in X^{j_r}$, 
$1\leq r\leq k$, we set $z_r = x_{r,1}+\cdots +x_{r,j_r}$ and find that 
\[ z_1\cdots z_k = \sum_{Q} x_{1,q_1}\cdots x_{k,q_k}, \]
where the sum runs over the set of sequences
$Q = (q_1,\cdots,q_k)$ such that $1\leq q_r\leq j_r$, ordered
lexicographically.  The parametrized
version required of a $(\sC,\sG)$-space is obtained by first defining maps
\begin{equation}\label{Act0} \xi\colon \sG(k)\times \sC(j_1)\times X^{j_1}\times 
\cdots \times \sC(j_k)\times X^{j_k} \rtarr \sC(j_1\cdots j_k)\times X^{j_1\cdots j_k}  
\end{equation}
and then requiring the following diagram to commute.
\begin{equation}\label{ActI}
 \xymatrix{
\sG(k)\times\sC(j_1)\times X^{j_1}\times \cdots \times \sC(j_k)\times X^{j_k}
\ar[r]^-{\id\times \tha^k} \ar[d]_{\xi} & \sG(k)\times X^k \ar[d]^{\xi}\\
\sC(j_1\cdots j_k)\times X^{j_1\cdots j_k}\ar[r]_-{\tha} & X\\} 
\end{equation}
The promised map $\xi$ on the left is defined by
\begin{equation}\label{ActII}
 \xi(g; c_1, y_1, \cdots, c_k, y_k) 
 = (\la(g;c_1,\cdots,c_k);\prod_{Q}\xi(g;y_Q))
\end{equation}
where $g\in\sG(k)$, $c_r\in \sC(j_r)$, $y_r = (x_{r,1},\cdots,x_{r,j_r})$,
the product is taken over the lexicographically ordered set of sequences $Q$,
as above, and $y_Q = (x_{1,q_1},\cdots , x_{k,q_k})$.

The following observation is trivial, but it will lead in the sequel \cite{Sequel} to significant technical simplifications of \cite{Mult}. 

\begin{rem}\label{cuteness} 
All basepoint conditions, including the strict zero condition, are 
in fact redundant.  We have
seen that the conditions $\sC(0) = 0$ and $\sG(0) = 1$ imply 
that the additive and multiplicative operad actions specify the points
$0$ and $1$ in $X$.  If any $j_r =0$, then $j_1\cdots j_r= 0$, we have no 
coordinates $x_{r,i_r}$, and we must interpret $\xi$ in (\ref{Act0}) to be 
the unique map to the point $\sC(0)\times X^0$.  Then
(\ref{ActI}) asserts that the right vertical arrow $\xi$ takes the
value $0$. With all but one $j_r = 1$ and the remaining $j_r=0$, this
forces $0$ to be a strict zero for $\xi$. 
\end{rem}

\section{$\sI$-spaces and the linear isometries operad}\label{isoop}

The canonical multiplicative operad is the linear isometries operad $\sL$,
which was introduced by Boardman and Vogt \cite{BV, BV2}; see also \cite[I\S1]{MQR}. 
It is an $E_{\infty}$ operad that enjoys several very special geometric properties. 
In this brief section, we recall its definition and that of related structures that 
give rise to $\sL$-spaces and $\sL$-spectra.  

Let $\sI$ denote the topological category of finite dimensional 
real inner product spaces and linear isometric isomorphisms and 
let $\sI_c$ denote the category of finite or countably infinite dimensional 
real inner product spaces and linear isometries.\footnote{The notations $\sI_{\ast}$
and $\sI$ were used for our $\sI$ and $\sI_c$ in \cite{MQR}.  We are following \cite{MS}.}  For the latter,
we topologize inner product spaces as the colimits of their finite 
dimensional subspaces and use the function space topologies on the
$\sI_c(V,W)$.  These are contractible spaces when $W$ is infinite
dimensional.  

When $V$ is finite dimensional and $Y$ is a based space, we let $S^V$ denote
the one-point compactification of $V$ and let $\OM^VY = F(S^V,Y)$ denote the 
$V$-fold loop space of $Y$.  In general $F(X,Y)$ denotes the space of based
maps $X\rtarr Y$. Let $U = \bR^{\infty}$ with its standard inner product.  
Define $\sL(j) = \sI_c(U^j,U)$, where $U^j$ is the sum of $j$ copies of $U$, 
with $U^0 = \{0\}$. The element $\id\in\sL(1)$ is the identity isometry, the 
left action of $\SI_j$ on $U^j$ induces the right action of $\SI_j$ on $\sL(j)$, 
and the structure maps $\ga$ are defined by 
\[ \ga(g;f_1,\cdots, f_j) = g\com (f_1\oplus\cdots \oplus f_j). \]
Notice that $\sL$ is a suboperad of the endomorphism operad of $U$.
 
For use in the next section and in the second sequel \cite{Sequel2}, we recall some 
related formal notions from \cite[I\S1]{MQR}. These reappeared and were 
given new names and applications in \cite[\S23.3]{MS}, whose notations we follow.
An $\sI$-space is a continuous functor $F\colon \sI\rtarr \sU$.   
An $\sI$-FCP (functor with cartesian product) $F$ is an $\sI$-space that is a 
lax symmetric monoidal functor, where $\sI$ is symmetric monoidal under 
$\oplus$ and $\sU$ is symmetric monoidal under cartesian product.  
This means that there are maps
\[ \om\colon F(V)\times F(W)\rtarr F(V\oplus W)\]
that give a natural transformation $\times \com (F,F)\rtarr F\com\oplus$  
which is associative and commutative up to coherent natural isomorphism.  We require that
$F(0) = \ast$ and that $\om$ be the evident identification when $V=0$ or $W=0$.  
When $F$ takes values in based spaces, we require the maps $FV\rtarr F(V\oplus W)$ that 
send $x$ to $\om(x,\ast)$ to be closed inclusions.  We say that $(F,\om)$ is 
monoid-valued if $F$ takes values in the cartesian monoidal category 
of monoids in $\sU$ and $\om$ is given by maps of monoids.  We then
give the monoids $F(V)$ their unit elements as basepoints and insist 
on the closed inclusion property.  All of the classical groups (and $String$) give examples.
Since the classifying space functor is product preserving, the spaces $BF(V)$ then give an
$\sI$-FCP $BF$.

We can define analogous structures with $\sI$ replaced throughout by $\sI_c$.
Clearly $\sI_c$-FCP's restrict to $\sI$-FCP's.  Conversely, our closed inclusion
requirement allows us to pass to colimits over inclusions $V\subset V'$ of subspaces 
in any given countably infinite dimensional inner product space to obtain $\sI_c$-FCP's
from $\sI$-FCP's.  Formally, we have an equivalence between the category of $\sI$-FCP's
and the category of $\sI_c$-FCP's. Details are given in \cite[I\S1 and VII\S2]{MQR} and 
\cite[\S23.6]{MS}, and we will illustrate the argument by example in the next section.
When we evaluate an $\sI_c$-FCP $F$ on $U$, we obtain an $\sL$-space $F(U)$, often 
abbreviated to $F$ when there is no danger of confusion.  The structure maps
\[ \tha\colon \sL(j)\times F(U)^j\rtarr F(U) \]
are obtained by first using $\om$ to define $F(U)^j\rtarr F(U^j)$ and then using 
the evaluation maps
\[ \sI_c(U^j,U)\times F(U^j)\rtarr F(U) \]
of the functor $F$.  This simple source of $E_{\infty}$-spaces is fundamental to 
the geometric applications, as we recall in \S\ref{unitspec} and the second sequel 
\cite{Sequel2}.  
We can feed these examples into the additive infinite 
loop space machine to obtain spectra.    

There is a closely related notion of an $\sI$-FSP (functor with smash 
product).\footnote{These were called $\sI_*$-prefunctors when they were 
first defined in \cite[IV\S2]{MQR}; the more sensible name FSP was introduced 
later by B\"okstedt \cite{Bok}. For simplicity, we restrict attention
to commutative $\sI$-FSP's in this paper.  In analogy with $\sI$-FCP's, the
definition in \cite[IV\S2]{MQR} required a technically convenient inclusion 
condition, but it is best not to insist on that.}  For this, we again 
start with an $\sI$-space $T\colon \sI\rtarr \sT$, but we now regard $\sT$ as symmetric
monoidal under the smash product rather than the cartesian product.
The sphere functor $S$ is specified by $S(V) = S^V$ and is strong
symmetric monoidal: $S(0) = S^0$ and $S^V\sma S^W\iso S^{V\oplus W}$.
An $\sI$-FSP is a lax symmetric monoidal functor $T$ together with a
unit map $S\rtarr T$.  This structure is given by maps
\[ \om\colon T(V)\sma T(W)\rtarr T(V\oplus W)\]
and $\et\colon S^V\rtarr T(V)$. When $W=0$, we require $\om\com (\text{id}\sma\et)$
to be the obvious identification $T(V)\sma S^0\iso T(V\oplus 0)$.  
The Thom spaces $TO(V)$ of the universal $O(V)$-bundles give the Thom $\sI$-FSP $TO$, 
and the other classical groups give analogous Thom $\sI$-FSP's.  
A full understanding of the relationship between $\sI$-FCP's and $\sI$-FSP's
requires the notion of parametrized $\sI$-FSP's, as defined and illustrated
by examples in \cite[\S23.2]{MS}, but we shall say no more about that here.

We shall define $E_{\infty}$ ring prespectra, or $\sL$-prespectra, in 
\S\ref{ringspec}.  The definition codifies structure that is implicit in 
the notion of an $\sI$-FSP, so these give examples.  That is, we 
have a functor from $\sI$-FSP's to $\sL$-prespectra.  The simple observation
that the classical Thom prespectra arise in nature from $\sI$-FSP's
is the starting point of $E_{\infty}$ 
ring theory and thus of this whole area of mathematics.  We shall also 
define $E_{\infty}$ ring spectra, or $\sL$-spectra, in \S\ref{ringspec},
and we shall describe a spectrification functor from $\sL$-prespectra to $\sL$-spectra.
Up to language and clarification of details, these constructions date from 1972 and are 
given in \cite{MQR}. It was noticed over twenty-five years later that $\sI$-FSP's are exactly 
equivalent to 
(commutative) orthogonal ring spectra. This gives an alternative way of processing the simple 
input data of $\sI$-FSP's, as we shall explain.  However, we next return to 
$E_{\infty}$ ring spaces and explain the canonical operad pair that acts
on the $0^{th}$ spaces of $\sL$-spectra, such as Thom spectra. 

\section{The canonical $E_{\infty}$ operad pair}\label{Steiner}

The canonical additive $E_{\infty}$ operad is much less obvious than $\sL$.  
We first recall the little cubes operads $\sC_n$, which were also introduced
by Boardman and Vogt \cite{BV, BV2}, and the little discs operads $\sD_V$. 
We then explain why neither is adequate for our purposes.

For an open subspace $X$ of a finite dimensional 
inner product space $V$, define the embeddings operad $\text{Emb}_X$ 
as follows.  Let $E_X$ denote the space of (topological) embeddings 
$X\rtarr X$. Let $\text{Emb}_X(j)\subset E_X^j$ be the space of $j$-tuples 
of embeddings with disjoint images.  Regard such a $j$-tuple as 
an embedding  $^jX\rtarr X$, where $^jX$ denotes the disjoint union
of $j$ copies of $X$ (where $^0X$ is the empty space).  The element $\id\in \text{Emb}_X(1)$ is the 
identity embedding, the group $\SI_j$ acts on $\text{Emb}_X(j)$ by 
permuting embeddings, and the structure maps 
\begin{equation}\label{gamma}
\ga\colon \text{Emb}_X(k)\times \text{Emb}_X(j_1)\times \cdots \times
\text{Emb}_X(j_k)\rtarr \text{Emb}_X(j_1+\cdots + j_k) 
\end{equation}
are defined as follows. Let $g=(g_1,\cdots, g_k)\in  \text{Emb}_X(k)$ and 
$f_r=(f_{r,1}, \cdots, f_{r,j_r})\in \text{Emb}_X(j_r)$, $1\leq r\leq k$.
Then the $r$th block of $j_r$ embeddings in
$\ga(g;f_1,\cdots, f_j)$ is given by the composites 
$g_r\com f_{r,s}$, $1\leq s\leq j_r$.   

Taking $X=(0,1)^n\subset \bR^n$, we obtain a suboperad $\sC_n$ of $\text{Emb}_X$ 
by restricting to the little $n$-cubes, namely those embeddings 
$f\colon X\rtarr X$ such that  $f = \ell_1\times\cdots\times \ell_n$, where 
$\ell_i(t) = a_it+b_i$ for real numbers $a_i>0$ and $b_i\geq 0$. 

For a general $V$, taking $X$ to be the open unit disc $D(V)\subset V$, 
we obtain a suboperad $\sD_V$ of $\text{Emb}_V$ by restricting 
to the little $V$-discs, namely those embeddings $f\colon D(V)\rtarr D(V)$ 
such that $f(v) = av +b$ for some real number $a>0$ and some element 
$b\in D(V)$. 

It is easily checked that these definitions do give well-defined suboperads.   
Let $F(X,j)$ denote the configuration space of $j$-tuples of distinct elements 
of $X$, with its permutation action by $\SI_j$.  These spaces do not fit together 
to form an operad, and $\sC_n$ and $\sD_V$ specify fattened up equivalents that 
do form operads.  By restricting little $n$-cubes or little  $V$-discs to their 
values at the center point of $(0,1)^n$ or $D(V)$,
we obtain $\SI_j$-equivariant deformation retractions 
\[  \sC_n(j)\rtarr F((0,1)^n,j)\iso F(\bR^n,j) \ \ \ \text{and}\ \ \ 
\sD_V(j)\rtarr F(D(V),j)\iso F(V,j).\]  
This gives control over homotopy types.

For a little $n$-cube $f$, the product $f\times \id$ gives a little
$(n+1)$-cube.  Applying this to all little $n$-cubes gives a ``suspension'' 
map of operads $\sC_n\rtarr \sC_{n+1}$.  We can pass to colimits over 
$n$ to construct the infinite little cubes operad $\sC_{\infty}$, and it is
an $E_{\infty}$ operad.  However, little $n$-cubes are clearly too square
to allow an action by the orthogonal group $O(n)$, and we cannot define
an action of $\sL$ on $\sC_{\infty}$.  

For a little $V$-disc $f$ and an element $g\in O(V)$, we obtain another
little $V$-disc $gfg^{-1}$.  Thus the group $O(V)$ acts on the operad
$\sD_V$.  However, for $V\subset W$, so that $W=V\oplus (W-V)$ where 
$W-V$ is the orthogonal complement of $V$ in $W$, little 
$V$-discs $f$ are clearly too round for the product $f\times \id$ 
to be a little $W$-disc.  We can send a little $V$-disc $v\mapsto av+b$
to the little $W$-disc $w\mapsto aw +b$, but that is not compatible with
the decomposition $S^W\iso S^V\sma S^{W-V}$ used to identify $\OM^W Y$ 
with $\OM^{W-V}\OM^V Y$.  Therefore we cannot suspend.   

In \cite{MQR}, the solution was to introduce the little convex bodies
partial operads.  They weren't actually operads because the structure
maps $\ga$ were only defined on subspaces of the expected domain
spaces.  The use of partial operads introduced quite unpleasant complications.
Steiner \cite{St} found a remarkable construction of operads $\sK_V$
which combine all of the good properties of the $\sC_n$ and the $\sD_V$.
His operads, which we call the Steiner operads, are defined in terms of paths 
of embeddings rather than just embeddings.

Define $R_V\subset E_V=\text{Emb}_V(1)$ to be 
the subspace of distance reducing embeddings $f\colon V\rtarr V$.  This
means that  $|f(v)-f(w)|\leq |v-w|$ for all $v,w\in V$.  Define a 
Steiner path to be a map $h\colon I\rtarr R_V$ such that $h(1)=\id$
and let $P_V$ be the space of Steiner paths.  Define 
$\pi\colon P_V\rtarr R_V$ by evaluation at $0$, $\pi(h) = h(0)$.   
Define $\sK_V(j)$ to be the space of $j$-tuples $(h_1,\cdots,h_j)$ of
Steiner paths such that the $\pi(h_r)$ have disjoint images. The element 
$\id\in \sK_V(1)$ is the constant path at the identity embedding,
the group $\SI_j$ acts on $\sK_V(j)$ by permutations, and the structure
maps $\ga$ are defined pointwise in the same way as those of $\text{Emb}_V$.
That is, for $g = (g_1,\dots,g_k)\in\sK_V(k)$ and 
$f_r=(f_{r,1},\dots,f_{r,j_r}) \in \sK(j_r)$, $\ga(g;f_1,\cdots f_j)$ 
is given by the embeddings 
$g_r(t)\com f_{r,s}(t)$, in order. This gives well defined operads, 
and application of $\pi$ to Steiner paths gives a map of operads 
$\pi\colon \sK_V\rtarr \text{Emb}_V$. 

By pullback along $\pi$, any space with an action by $\text{Emb}_V$ inherits 
an action by $\sK_V$.  As in \cite[\S5]{Geo} or \cite[VII\S2]{MQR}, 
$\text{Emb}_V$ acts naturally on $\OM^VY$. A $j$-tuple
of embeddings $V\rtarr V$ with disjoint images determines a map from $S^V$ to
the wedge of $j$ copies of $S^V$ by collapsing points of $S^V$ not in any of
the images to the point at infinity and using the inverses of the given embeddings 
to blow up their images to full size.  A $j$-tuple of based maps
$S^V\rtarr Y$ then gives a map from the wedge of the $S^V$ to $Y$. 
Thus the resulting action $\tha_V$ of $\sK_V$ on $\OM^VY$ is given by composites
\[\xymatrix@1{
\sK_V(j)\times (\OM^V Y)^j\ar[r]^-{\pi\times\text{id}} & 
\text{Emb}_V(j)\times (\OM^V Y)^j \ar[r] & \OM^V(^jS^V)\times (\OM^V Y)^j\ar[r] & \OM^VY. \\} \]
Evaluation of embeddings at $0\in V$ gives maps $\ze\colon \text{Emb}_V(j)\rtarr  F(V,j)$. 
Steiner determines the homotopy types of the $\sK_V(j)$ by proving that the 
composite maps $\ze\com\pi\colon \sK_V(j)\rtarr F(V,j)$ are $\SI_j$-equivariant 
deformation retractions.  

The operads $\sK_V$ have extra geometric structure that make them ideally suited for our purposes.
Rewriting $F(V) = F_V$, we see that $E$, $R$, and $P$ above are monoid-valued
$\sI$-FCP's.  The monoid products are given (or induced pointwise) by composition of 
embeddings, and the maps $\om$ are given by cartesian products of embeddings. For the 
functoriality, if $f\colon V\rtarr V$ is an embedding and 
$g\colon V\rtarr W$ is a linear isometric isomorphism, then we obtain an
embedding $gfg^{-1}\colon W\rtarr W$ which is distance reducing if $f$ is.  
We have an inclusion $R\subset E$ of monoid-valued $\sI$-FCP's, and evaluation 
at $0$ gives a map $\pi\colon P\rtarr R\subset E$ of monoid-valued $\sI$-FCP's.
The operad structure maps of $\text{Emb}_V$ and $\sK_V$ are induced by the monoid 
products, as is clear from the specification of $\ga$ after (\ref{gamma}).  

The essential point is that, in analogy with (\ref{gamma}), we have maps
\begin{equation}\label{lambda}
\la\colon \sI(V_1\oplus\cdots \oplus V_k, W)\times 
\text{Emb}_{V_1}(j_1)\times \cdots \times \text{Emb}_{V_k}(j_k)\rtarr 
\text{Emb}_{W}(j_1 + \cdots + j_k)
\end{equation}
defined as follows. Let $g\colon V_1\oplus\cdots \oplus V_k\rtarr W$ be a 
linear isometric isomorphism and let 
$f_r=(f_{r,1}, \cdots, f_{r,j_r})\in \text{Emb}_{V_r}(j_r)$, $1\leq r\leq k$.
Again consider the set of sequences $Q=(q_1,\cdots,q_k)$, $1\leq q_r\leq j_r$,
ordered lexicographically.  Identifying direct sums with direct products, the 
$Q^{th}$ embedding of $\la(g;f_1,\cdots,f_k)$ is the composite 
$gf_{Q}g^{-1}$, where $f_Q = f_{1,q_1}\times \cdots \times f_{k,q_k}$.  
Restricting to distance reducing embeddings $f_{r,s}$ and applying the result 
pointwise to Steiner paths, there result maps
\begin{equation}\label{lambda2}
\la\colon \sI(V_1\oplus\cdots \oplus V_k, W)\times 
\sK_{V_1}(j_1)\times\cdots \times \sK_{V_k}(j_k)\rtarr 
\sK_{W}(j_1 + \cdots + j_k).
\end{equation}

Passing to colimits over inclusions $V\subset V'$ of subspaces in any given 
countably infinite dimensional inner product space, such as $U$, we obtain structure 
exactly like that just described, but now defined on all of $\sI_c$, rather 
than  just on $\sI$. (Compare \cite[I\S1 and VII\S2]{MQR}  and \cite[\S23.6]{MS}).
For example, suppose that the $V_r$ and $W$ in (\ref{lambda2}) are infinite dimensional.  
Since the spaces $\text{Emb}_{V_r}(1)$ are obtained by passage to colimits over the finite 
dimensional subspaces of the $V_r$, for each of the embeddings $f_{r,s}$, there is a finite 
dimensional subspace $A_{r,s}$ such that $f_{r,s}$ is the identity on the orthogonal
complement $V_r -A_{r,s}$.  Therefore, all of the $f_Q$ are the identity
on $V_1\oplus\cdots \oplus V_k - B$ for a sufficiently large $B$.  On
finite dimensional subspaces $gV\subset W$, we define $\la$ as before,
using the maps $gf_{Q}g^{-1}$.  On the orthogonal complement $W-gV$ for 
$V$ large enough to contain $B$, we can and must define the
$Q^{th}$ embedding to be the identity map for each $Q$.    

Finally, we define the canonical additive $E_{\infty}$ operad, denoted $\sC$, 
to be the Steiner operad $\sK_U$. Taking $V_1=\cdots = V_k = U$, we have the 
required maps 
\[ \la\colon \sL(k)\times \sC(j_1)\times \cdots \times \sC(j_k)
\rtarr \sC(j_1\cdots j_k).\]
They make the (unspecified) distributivity diagrams commute, and we next
explain the significance of those diagrams.

\section{The monadic interpretation of the definitions} \label{monad}

We assume that the reader has seen the definition of a monad.
Fixing a ground category $\sV$, a quick definition is that a monad 
$(C,\mu,\et)$ on $\sV$ is a monoid in the category of functors 
$\sV\rtarr \sV$.  Thus $C\colon \sV\rtarr\sV$ is a functor, 
$\mu\colon CC\rtarr C$ and $\et\colon \text{Id}\rtarr C$ are natural transformations, 
and $\mu$ is associative with $\et$ as a two-sided unit. A $C$-algebra is
an object $X\in\sV$ with a unital and associative action map 
$\xi\colon CX\rtarr X$.   
We let $C[\sV]$ denote the category of
$C$-algebras in $\sV$. 

Similarly, when $\sO$ is an operad in $\sV$, we write 
$\sO[\sV]$ for the category of $\sO$-algebras in $\sV$.
We note an important philosophical distinction.  Monads are very general, and
their algebras in principle depend only on $\sV$, without reference
to any further structure that category might have. In contrast, $\sO$-algebras
depend on a choice of symmetric monoidal structure on $\sV$, and that might vary.  
We sometimes write $\sO[\sV,\ten]$ to emphasize this dependence. 

For an operad $\sO$ of unbased spaces with $\sO(0)=*$, as before, there 
are two monads in sight, both of which have the same algebras 
as the operad $\sO$.\footnote{Further categorical perspective on the material
of this section is given in Appendix A (\S14).}  Viewing operads as acting on unbased spaces, 
we obtain a monad $O_{+}^{\sU}$ on $\sU$ with
\begin{equation}\label{AltI} 
O_{+}^{\sU}X = \coprod_{j\geq 0} \sO(j)\times_{\SI_j} X^j.
\end{equation}
Here $\et(x) = (1,x) \in \sO(1)\times X$, and $\mu$ is obtained by taking
coproducts of maps on orbits induced by the structure maps $\ga$.  If 
$X$ has an action $\tha$ by $\sO$, then $\xi\colon O_{+}^{\sU}X\rtarr X$ 
is given by the action maps 
$\tha\colon \sO(j)\times_{\SI_j}X^j\rtarr X$,
and conversely. The subscript $+$ on the monad is intended to indicate that
it is ``augmented'', rather than ``reduced''.  The superscript $\sU$ is
intended to indicate that the operad from which the monad is constructed
is an operad of unbased spaces.

Viewing operads  as acting on spaces with preassigned basepoints, we 
construct a reduced monad $O=O^{\sU}$ on 
$\sT$ by quotienting $O_{+}^{\sU}X$ by basepoint identifications.  There 
are degeneracy operations $\si_i\colon \sO(j)\rtarr \sO(j-1)$ given by
\[ \si_i(c) = \ga(c;(\id)^{i-1},*,(\id)^{j-i}) \]
for $1\leq i\leq j$, and there are maps $s_i\colon X^{j-1}\rtarr X^j$ 
obtained by inserting the basepoint in the $i^{th}$ position.  We set
\begin{equation}\label{AltII}
OX \equiv O^{\sU} X= O_{+}^{\sU}X/(\sim),
\end{equation} 
where $(c,s_iy)\sim (\si_i c, y)$ for
$c\in \sO(j)$ and $y\in X^{j-1}$.  Observations in \S1 explain why these two operads have the same algebras. The reduced monad $O=O^{\sU}$ is more general than the augmented monad $O_+^{\sU}$ since
the latter can be obtained by applying the former to spaces of the form $X_+$.  That is, for unbased 
spaces $X$,
\begin{equation}\label{Omore1}
O^{\sU}(X_+)\iso O_+^{\sU}X
\end{equation}
as $\sO$-spaces. To keep track of adjunctions later, we
note that the functor $(-)_+$ is left adjoint to the functor
$i\colon \sT\rtarr \sU$ that is obtained by forgetting basepoints.

The reduced monad $O=O^{\sU}$ on $\sT$ is of primary topological interest, but the idea that there is a choice will simplify the multiplicative theory. Here we diverge from the original sources \cite{MQR, Mult}.\footnote{I am indebted to helpful conversations with Bob Thomason and Tyler Lawson, some twenty-five years apart, for the changed perspective.}   Summarizing, we have the following result.

\begin{prop}\label{isocats1} The following categories are isomorphic.
\begin{enumerate}[(i)]
\item The category $\sO[\sU,\times] = \sO[\sT, \times]$ of $\sO$-spaces.
\vspace{1mm}
\item The category $O^{\sU}_+[\sU]$ of $O^{\sU}_+$-algebras in $\sU$.
\vspace{1mm}
\item The category $O[\sT] \equiv O^{\sU}[\sT]$ of $O$-algebras in $\sT$.
\end{enumerate}
\end{prop}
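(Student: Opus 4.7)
The plan is to exhibit explicit functors in each direction between the three categories and check they are mutually inverse. The key observation, already implicit in \S1, is that any operad action $\tha$ forces a basepoint $\tha(\ast)\in X$ via the unique element of $\sO(0)$, so for an operad $\sO$ with $\sO(0)=\ast$ the categories $\sO[\sU,\times]$ and $\sO[\sT,\times]$ coincide: one either reads off the basepoint from the action, or one requires it to agree with a preassigned basepoint, and the two descriptions carry the same data.

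For the isomorphism between (i) and (ii), I would use the standard operad-to-monad correspondence. An $\sO$-action consists of $\SI_j$-equivariant maps $\tha_j\colon \sO(j)\times X^j\rtarr X$ that are associative with respect to the operadic structure maps $\ga$ and unital with respect to $\id\in\sO(1)$. The $\SI_j$-equivariance bundles these into a single map $O_+^{\sU}X\rtarr X$, and the operadic axioms translate directly into the monad algebra axioms, since $\mu$ on $O_+^{\sU}$ is assembled from the $\ga$ and $\et$ is the insertion of $\id\in\sO(1)$. The inverse functor simply reads off the components indexed by $j$.

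For the isomorphism between (ii) and (iii), recall that $O = O^{\sU}$ is by construction the quotient of $O_+^{\sU}$ by the basepoint identifications $(c,s_i y)\sim (\si_i c, y)$. Given an $O_+^{\sU}$-algebra structure $\xi$ on an unbased space $X$, the element $\xi(\ast)\in X$ provides a canonical basepoint. One then verifies that $\xi$ factors through $OX$; that is, $\xi(c,s_i y) = \xi(\si_i c,y)$ whenever $s_i y$ inserts $\xi(\ast)$ in the $i^{th}$ coordinate. This follows from the associativity of the algebra structure applied to the identity $\si_i c = \ga(c;\id,\dots,\ast,\dots,\id)$, with the nullary $\ast$ in the $i^{th}$ slot. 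Conversely, pulling back an $O$-action along the quotient map $O_+^{\sU}X\rtarr OX$ produces an $O_+^{\sU}$-algebra structure whose forced basepoint is the given one, and the two constructions are mutually inverse.

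The main point requiring care is the verification that the reduced monad $O$ loses no information relative to $O_+^{\sU}$, which is exactly the content of Remark~\ref{cuteness}: the quotient relations imposed in passing to $OX$ are automatic for any $O_+^{\sU}$-algebra once one interprets $\xi(\ast)$ as the basepoint. Granted this, the three categories have the same objects and morphisms, and the displayed isomorphisms are tautologies in the identification of data.
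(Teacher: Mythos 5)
Your argument is correct and follows essentially the same route as the paper, which proves this proposition only by appeal to the observations of \S\ref{ringspace} (the action forces the basepoint $\tha(\ast)$, so $\sO[\sU]=\sO[\sT]$) together with the fact that the degeneracy identifications defining the reduced monad are automatic consequences of the algebra axioms. Your explicit verification that $\xi(c,s_iy)=\xi(\si_ic,y)$ via associativity applied to $\si_ic=\ga(c;\id,\dots,\ast,\dots,\id)$ is exactly the detail the paper leaves implicit.
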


We have another pair of monads associated to an operad $\sO$.
Recall again that operads and operad actions make sense in any symmetric 
monoidal category $\sV$.  Above, in (i), we are thinking of $\sT$
as cartesian monoidal, and we are entitled to use the alternative notations 
$\sO[\sU]$ and $\sO[\sT]$ since $\sO$-algebras in $\sU$ can equally well be regarded as $\sO$-algebras in $\sT$. The algebras in Proposition \ref{isocats1} have parametrized products $X^j\rtarr X$ that are defined on cartesian powers $X^j$.

However, we can change ground categories to the symmetric monoidal 
category $\sT$ under its smash product, with unit $S^0$.  We write 
$X^{(j)}$ for the $j$-fold smash power of a space (or, later,
spectrum) $X$, with $X^{(0)} = S^0$. Remembering that 
$X_+\sma Y_+\iso (X\times Y)_+$, we can adjoin disjoint basepoints to the
spaces $\sO(j)$ to obtain an operad $\sO_{+}$ with spaces $\sO_+(j) = \sO(j)_+$  
in $\sT$; in particular, $\sO_{+}(0) = S^0$.
The actions of $\sO_{+}$ parametrize products $X^{(j)}\rtarr X$, and we have
the category $\sO_{+}[\sT]$ of $\sO_{+}$-spaces in $\sT$.  

Recall that $\sT_e$ denotes the category of spaces $X$ under $S^0$, with given map
$e\colon S^0\rtarr X$. 
In \cite{MQR,Mult}, we defined an $\sO$-space with zero, or $\sO_0$ space, 
to be an $\sO$-space $(X,\xi)$ in $\sT_e$ 
such that $0$ acts as a strict 
zero, so that $\xi(f;x_1,\cdots,x_j)= 0$ if any $x_i=0$.  That is exactly 
the same structure as an $\sO_+$-space in $\sT$.  The only difference is that now we think of $\xi\colon S^0 = \sO_+(0)\rtarr X$ as building in the map $e\colon S^0\rtarr X$, which is no longer preassigned.  
We are entitled to use the alternative notation 
$\sO_+[\sT_e]$ for $\sO_{+}[\sT]$.

We construct an (augmented) monad $O_{+} = O_+^{\sT}$ on $\sT$ such that an 
$\sO_{+}$-space in $\sT$ is the same as an $O_{+}$-algebra in $\sT$ by 
setting
\begin{equation}\label{AltIII} 
O_{+}X \equiv O_{+}^{\sT}X = \bigvee_{j\geq 0} \sO(j)_+\sma_{\SI_j} X^{(j)}. 
\end{equation}
This and (\ref{AltI}) are special cases of a general definition that applies to operads
in any cocomplete symmetric monoidal category, as is
discussed in detail in \cite{MayDef, MayDef2}.  

As a digression, thinking homotopically and letting $E\SI_j$ be any contractible free $\SI_j$-space, one defines the extended $j$-fold
smash power $D_jX$ of a based space $X$ by 
\begin{equation}\label{DJX}  D_jX = (E\SI_j)_+\sma _{\SI_j} X^{(j)}. 
\end{equation}
These spaces have many applications.  Homotopically, when $\sO$ is an
$E_{\infty}$ operad, $O_{+}X$ is a model for the wedge over $j$ of the 
spaces $D_jX$. 

Here we have not viewed the element $1$ of a space under $S^0$ as a basepoint.
However, we can use basepoint identifications to take account of the unit properties of $1$ in an action by $\sO_+$.  We then obtain a reduced monad 
$O^{\sT}$ on $\sT_e$ with the same algebras as the monad $O_+^{\sT}$ on $\sT$. It is again more general than the monad $O_+$.  For a based space $X$, 
$S^0\wed X$ is the space under $S^0$ obtained from the based space $X$ by adjoining a point $1$ (not regarded as a basepoint).  This gives
the left adjoint to the inclusion $i\colon \sT_{e}\rtarr \sT$ that is obtained
by forgetting the point $1$, and
\begin{equation}\label{Omore2}
O^{\sT}(S^0\wed X)\iso O^{\sT}_+(X)
\end{equation}
as $\sO_+$-spaces. We summarize again.

\begin{prop}\label{isocats2} The following categories are isomorphic.
\begin{enumerate}[(i)]
\item The category $\sO_+[\sT,\sma] = \sO_+[\sT_e,\sma]$ of $\sO_+$-spaces.
\vspace{1mm}
\item The category $O_+[\sT]\equiv O^{\sT}_+[\sT] $ of $O_+$-algebras in $\sT$.
\vspace{1mm}
\item The category $O^{\sT}[\sT_e]$ of $O^{\sT}$-algebras in $\sT_e$.
\end{enumerate}
\end{prop}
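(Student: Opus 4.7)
The plan is to mirror, step by step, the proof of Proposition \ref{isocats1}, transporting every argument from the cartesian symmetric monoidal structure $(\sT,\times)$ to the smash structure $(\sT,\sma)$ and replacing the adjunction $(-)_+\dashv i$ with the adjunction $(S^0\wed-)\dashv i\colon \sT_e\rtarr \sT$.

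First I would dispose of the literal equality in (i). Since $\sO_+(0)=S^0$ by construction, any $\sO_+$-action on a based space $X$ includes a nullary structure map $\sO_+(0)\sma X^{(0)}=S^0\rtarr X$, and this is exactly the data of an object of $\sT_e$; conversely, forgetting the preassigned map $e\colon S^0\rtarr X$ recovers the $\sO_+$-action in $\sT$. So the two categories have the same objects and the same morphisms.

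For (i)$\iso$(ii) I would run the standard translation between operad actions and monad algebras in the symmetric monoidal category $(\sT,\sma)$. Given maps $\xi_j\colon \sO(j)_+\sma X^{(j)}\rtarr X$, the $\SI_j$-equivariance is equivalent to factoring through $\sO(j)_+\sma_{\SI_j} X^{(j)}$, and the factored maps assemble into a single map $\xi\colon O_+X\rtarr X$ via the wedge in (\ref{AltIII}). The unit condition on $\id\in\sO(1)$ becomes $\xi\com\et=\id$, while compatibility with the operadic structure maps $\ga$ becomes $\xi\com\mu=\xi\com O_+\xi$. This is the verbatim argument used in Proposition \ref{isocats1} to identify (i) with (ii), with $\sma$ in place of $\times$ and $\sO(j)_+$ in place of $\sO(j)$.

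Finally, for (ii)$\iso$(iii) I would proceed in parallel with the passage from (ii) to (iii) in Proposition \ref{isocats1}. Define $O^{\sT}X$ for $X\in \sT_e$ as the quotient of $O_+^{\sT}X$ by the relations $(c,s_i y)\sim (\si_i c,y)$, where the $\si_i$ are the operad degeneracies used before and the $s_i\colon X^{(j-1)}\rtarr X^{(j)}$ now insert the distinguished element $1\in X$ in the $i^{th}$ slot (rather than the basepoint $0$). Given an $O_+^{\sT}$-algebra structure $\xi$ on a based space $X$, the nullary part of $\xi$ supplies a point $1\in X$, and compatibility of $\xi$ with $\ga$ applied at nullary slots forces $\xi$ to factor through these identifications, yielding an $O^{\sT}$-algebra in $\sT_e$. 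Conversely, by (\ref{Omore2}), an $O^{\sT}$-algebra on $X\in\sT_e$ is precisely an $O_+^{\sT}$-algebra on the underlying based space obtained via $S^0\wed(-)$. The main obstacle I expect is verifying that $\mu\colon O^{\sT}O^{\sT}\rtarr O^{\sT}$ and $\et\colon \Id\rtarr O^{\sT}$ descend well-definedly through the unit-induced quotient, and that (\ref{Omore2}) is genuinely $\sO_+$-equivariant; these checks are formally identical to the analogous ones for (\ref{Omore1}), so they go through by the same reasoning.
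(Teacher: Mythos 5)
Your proposal is correct and takes essentially the same approach as the paper, which treats Proposition \ref{isocats2} as a summary of the preceding definitional discussion: the nullary operation $\sO_+(0)=S^0$ builds in the point $1$ (giving the identification in (i)), the monad $O_+^{\sT}$ of (\ref{AltIII}) encodes $\sO_+$-actions in $(\sT,\sma)$ exactly as in the cartesian case, and the reduced monad $O^{\sT}$ on $\sT_e$ is obtained by the identifications at the unit $1$, with the same algebras. The only quibble is your appeal to (\ref{Omore2}) for the converse in (ii)$\iso$(iii): that isomorphism concerns free objects and is not what is needed there, but the converse is in any case immediate by composing an $O^{\sT}$-action with the natural quotient map $O_+^{\sT}X\rtarr O^{\sT}X$, so nothing is lost.
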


In \cite{MQR, Mult}, we viewed the multiplicative structure of $E_{\infty}$ 
ring spaces as defined on the base category $\sT_e$, and we used the monad 
$G^{\sT}$ on $\sT_e$ instead of the monad $G_+$ on $\sT$.  That unnecessarily 
complicated the details in \cite{Mult}, where different kinds of input data 
to multiplicative infinite loop space theory were compared, and we now prefer
to use $G_+$.  That is convenient both here and in the sequel \cite{Sequel}.

With this as preamble, consider an operad pair $(\sC,\sG)$, such as 
the canonical one $(\sC,\sL)$ from the previous section.  We have several 
monads in sight whose algebras are the $(\sC,\sG)$-spaces $X$.  We single
out the one most convenient for the comparison of $E_{\infty}$ ring spaces
and $E_{\infty}$ ring spectra by focusing on the ``additive monad'' $C$ 
on $\sT$, where the basepoint is denoted $0$ and is the unit for the 
operadic product, and the ``multiplicative monad'' $G_+$, which is also 
defined on $\sT$.   A different choice will be more convenient in the
sequel \cite{Sequel}. 

The diagrams that we omitted in our outline definition of an action of $\sG$ 
on $\sC$ are exactly those that are needed to make the following result true.

\begin{prop}\label{CmonSpace} 
The monad $C$ on $\sT$ restricts to a monad on the category 
$\sG_+[\sT]$ of $\sG_{+}$-spaces in $\sT$. A $(\sC,\sG)$-space is 
the same structure as a $C$-algebra in $\sG_{+}[\sT]$.
\end{prop}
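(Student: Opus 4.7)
The plan is to transport the formula (\ref{ActII}) from its role as the definition of a $(\sC,\sG)$-space to its role as defining a natural $\sG_+$-action on $CX$ for any $\sG_+$-space $X$. Concretely, given $g \in \sG(k)$ and points $[c_r, (x_{r,1},\dotsc,x_{r,j_r})] \in CX$ for $r=1,\dotsc,k$, I would define
\[ \xi_{CX}(g; [c_1,y_1],\dotsc,[c_k,y_k]) \;=\; \bigl[\la(g;c_1,\dotsc,c_k),\; \textstyle\prod_{Q}\xi_X(g;y_Q)\bigr], \]
with $Q$ running over the lexicographically ordered sequences as in (\ref{ActII}). Once this is shown to be a well-defined $\sG$-action with $0$ as a strict zero, everything else follows by a systematic verification that the relevant axioms of an operad pair are precisely what is needed.

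First I would check that this formula descends to the $\SI_{j_r}$-orbits and respects the degeneracy identifications in the definition (\ref{AltII}) of $CX$. Equivariance is immediate from the equivariance clause of the action of $\sG$ on $\sC$ combined with the symmetry of the lexicographic product, and the degeneracy identifications are respected because $\la(g;\dotsc,\id,\dotsc) = \id$ in the appropriate sense (this is part of the unit axioms for the operad pair) and because $0$ acts as a strict zero on $X$. Equivariance of $\xi_{CX}$ in $g \in \sG(k)$ under $\SI_k$ uses the corresponding equivariance built into $\la$ and $\xi_X$; the unit $\id \in \sG(1)$ acts trivially by the unit axiom; and associativity of the $\sG$-action on $CX$ with respect to the operad structure maps of $\sG$ is exactly the ``associativity'' clause in the definition of an action of $\sG$ on $\sC$ (which one can view as the requirement that iterated distribution of sums through products agrees with single distribution after composition). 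The strict zero condition $\xi_{CX}(g;\dotsc,0,\dotsc) = 0$ holds because the relevant summand involves the empty arity $\sC(0) = \ast$ and reduces to the basepoint of $CX$, by Remark \ref{cuteness}. Next, the monad unit $\et\colon X \rtarr CX$, sending $x$ to $[\id,x]$, is $\sG_+$-equivariant because $\la(g;\id,\dotsc,\id) = \id$. Equivariance of $\mu\colon CCX \rtarr CX$ reduces, after unpacking, to the compatibility between $\la$ and the operadic composition $\ga$ of $\sC$, which is precisely the second distributivity axiom of an operad pair. This establishes that $C$ restricts to a monad on $\sG_+[\sT]$.

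For the identification of algebras, observe that a $C$-algebra in $\sG_+[\sT]$ consists of a $\sG_+$-space $(X,\xi)$ together with a morphism $\tha\colon CX \rtarr X$ in $\sG_+[\sT]$ satisfying the usual monad algebra axioms. By Proposition \ref{isocats1}, the underlying map $\tha$ is the same data as an action of $\sC$ on $X$ (with $0$ the operadic basepoint). The requirement that $\tha$ be a morphism of $\sG_+$-spaces unravels, upon substituting the definition of $\xi_{CX}$ above and evaluating on points of the form $[c_r,y_r]$, to precisely the commutativity of the parametrized distributivity diagram (\ref{ActI}). Since all other axioms for a $(\sC,\sG)$-space (equivariance, units, strict zero) have already been matched in assembling the structure, the two notions coincide.

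The main obstacle I would expect is the bookkeeping in the second half of step two: verifying $\sG_+$-equivariance of $\mu\colon CCX \rtarr CX$. This is where one must carefully match the iterated distributivity encoded by compositions in $\sC$ against the single distributivity expressing $\la\com(\id\times\ga^k)$ in terms of $\ga\com\la$, while simultaneously tracking the lexicographic reindexing of the double-distributed products in $X$; it is exactly the ``unspecified distributivity diagrams'' alluded to at the end of \S\ref{Steiner} that encode this compatibility, and once they are spelled out the verification is forced.
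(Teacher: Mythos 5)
Your proposal is correct and follows essentially the same route as the paper's own (sketch) proof: use the formula (\ref{ActII}) to endow $CX$ with a $\sG_+$-structure, check that $\et$ and $\mu$ are $\sG_+$-maps using the equivariance, unit, and distributivity axioms of the operad pair, and then observe that diagram (\ref{ActI}) identifies $C$-algebras in $\sG_+[\sT]$ with $(\sC,\sG)$-spaces. The paper relegates the detailed verifications (well-definedness on the quotient $CX$, compatibility of $\la$ with $\ga$ for the equivariance of $\mu$) to \cite[VI\S1]{MQR}, which are exactly the points you flag and outline.
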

\begin{proof}[Sketch proof] The details of the definition of an action
of $\sG$ on $\sC$ are designed to ensure that, for a $\sG_{+}$-space 
$(X,\xi)$, the maps $\xi$ of (\ref{ActII}) induce maps 
\[ \xi\colon \sG(k)_+\sma (CX)^{(k)}\rtarr CX \]
that give $CX$ a structure of $\sG_{+}$-space in $\sT$ such that 
$\mu\colon CCX\rtarr CX$ and $\et\colon X\rtarr CX$ are maps of
$\sG_+$-spaces in $\sT$.  Then the diagram (\ref{ActI}) asserts
that a $(\sC,\sG)$-space is the same as a $C$-algebra in $\sG_{+}[\sT]$.   
Details are in \cite[VI\S1]{MQR}. 
\end{proof}

We have the two monads $(C,\mu_{\oplus},\et_{\oplus})$ and 
$(G_{+},\mu_{\otimes},\et_{\otimes})$, both defined on $\sT$,
such that $C$ restricts to a monad on $\sG_{+}[\sT]$.   This 
puts things in a general categorical context that was studied 
by Beck \cite{Beck}.  A summary of his results is given 
in \cite[5.6]{Mult} and in \cite[App.B (\S15)]{Sequel}.  He gives several 
equivalent characterizations
of a pair $(C,G_{+})$ of monads related in this fashion.
One is that the composite functor $CG_{+}$ is itself
a monad with certain properties.  Another is that there is a 
natural interchange transformation $G_{+}C\rtarr CG_{+}$ such that 
appropriate diagrams commute. Category theorists know well that 
this is definitively the right context in which to study generalized 
distributivity phenomena.  While I defined $E_{\infty}$ ring
spaces before I knew of Beck's work, his context makes it clear
that this is a very natural and conceptual definition.

\section{The definition of $E_{\infty}$ ring prespectra and $E_{\infty}$ ring spectra}\label{ringspec}

We first recall the categories of (LMS) prespectra and spectra from \cite{LMS}.
As before, we let $U = \bR^{\infty}$ with its standard inner product. Define an indexing space to be a finite dimensional subspace of $U$ with the induced inner product. A (coordinate free) prespectrum $T$ consists of based spaces $TV$ and based maps
$\tilde{\si}\colon TV\rtarr \OM^{W-V}TW$ for $V\subset W$, where $W-V$ is the orthogonal complement of $V$ and $S^{W-V}$ is its one point compactification; 
$\tilde{\si}$  must be the identity when $V=W$, and the obvious transitivity condition must hold when $V\subset W\subset Z$.  A spectrum is a prespectrum 
such that each map $\tilde{\si}$ is a homeomorphism; we then usually write $E$ rather than $T$.  

We let $\sP$ and $\sS$
denote the categories of prespectra and spectra.  Then $\sS$ is a full 
subcategory of $\sP$, with inclusion $\ell\colon \sS\rtarr \sP$, and there
is a left adjoint spectrification functor $L\colon \sP\rtarr \sS$.   When
$T$ is an inclusion prespectrum, meaning that each $\tilde{\si}$ is an
inclusion, $(LT)(V) = \colim_{V\subset W} \OM^{W-V} TW$.

We may restrict attention to any cofinal set of indexing spaces ${\sV}$ in $U$;
we require $0$ to be in $\sV$ and we require the union of the $V$ in $\sV$ 
to be all of $U$. Up to isomorphism, the category $\sS$ is independent of the
choice of $\sV$.  The default is $\sV = \sA\ell\ell$.  We can define 
prespectra and spectra in the same way in any countably infinite dimensional real inner product space $U$, and we write $\sP(U)$ when we wish to remember 
the universe.  The default is $U=\bR^{\infty}$.

For $X\in \sT$, we define $QX$ to be $\colim \OM^V\SI^V X$, and we let
$\et\colon X\rtarr QX$ be the natural inclusion.  We define 
$(\SI^{\infty}X)(V) = Q\SI^V X$.  Since $S^W\iso S^V\sma S^{W-V}$, we have 
identifications $Q\SI^V X\iso \OM^{W-V} Q\SI^W X$, so that $\SI^{\infty}X$
is a spectrum.  For a spectrum $E$, we define $\OM^{\infty}E = E(0)$; we often write it as $E_0$.  The functors $\SI^{\infty}$ 
and $\OM^{\infty}$ are adjoint, $QX$ is $\OM^{\infty}\SI^{\infty}X$, and $\et$
is the unit of the adjunction.  We let 
$\epz\colon \SI^{\infty}\OM^{\infty}E\rtarr E$ be the counit of 
the adjunction.  

As holds for any adjoint pair of functors, we have a monad 
$(Q,\mu,\et)$ on $\sT$, where 
$\mu\colon QQ\rtarr Q$ is
$\OM^{\infty}\epz\SI^{\infty}$, and the functor $\OM^{\infty}$ 
takes values in $Q$-algebras via the action map
$\OM^{\infty}\epz\colon 
\OM^{\infty}\SI^{\infty}\OM^{\infty}E \rtarr \OM^{\infty}E$.
These observations are categorical trivialities, but they are central
to the theory and will be exploited heavily.  We will see later that this adjunction restricts
to an adjunction in $\sL_+[\sT]$. That fact will lead to the proof that the
$0^{th}$ space of an $E_{\infty}$ ring spectrum is an $E_{\infty}$ 
ring space.

As already said, the starting point of this area of mathematics was the observation
that the classical Thom prespectra, such as $TU$, appear in nature as $\sI$-FSP's
and therefore as $E_{\infty}$ ring prespectra in the sense we are about to describe.
To distinguish, we will write $MU$ for the corresponding spectrum.
Quite recently, Strickland \cite[App]{Strick} has observed similarly
that the periodic Thom spectrum, $PMU = MU[x,x^{-1}]$, also arises in 
nature as the spectrum associated to an $E_{\infty}$ ring prespectrum.
To define this concept, we must consider smash products and change of universe.

We have an external smash product 
$T\barwedge T'$ that takes a pair of prespectra indexed on $\sA\!\ell\ell$ 
in $U$ to a prespectrum indexed on the indexing spaces 
$V\oplus V'$ in $U\oplus U$.
It is specified by 
\[ (T\barwedge T')(V,V') = TV\sma T'V' \]
with evident structure maps induced by those of $T$ and $T'$.  This 
product is commutative and associative in an evident sense; for the
commutativity, we must take into account the interchange isomorphism
$\ta\colon U\oplus U\rtarr U\oplus U$.  If we think of spaces as 
prespectra indexed on $0$, then the space $S^0$ is a unit object.
Formally, taking the disjoint union over $j\geq 0$ of the categories 
$\sP(U^j)$, we obtain a perfectly good symmetric monoidal category.  
This was understood in the 1960's.  A well-structured treatment 
of spectra from this external point of view was given by Elmendorf \cite{Elm}. 

For a linear isometry $f\colon U\rtarr U'$, we have an obvious change of universe
functor $f^*\colon \sP(U')\rtarr \sP(U)$ specified by $(f^*T')(V) = T'(fV)$,
with evident structure maps.  It restricts to a change of universe functor
$f^*\colon \sS(U')\rtarr \sS(U)$.  These functors have left adjoints $f_*$.
When $f$ is an isomorphism, $f_* = (f^{-1})^*$.  For a general $f$, it is 
not hard to work out an explicit definition; see \cite[II\S1]{LMS}.  The 
left adjoint on the spectrum level is $L f_*\ell$.
This is the way one constructs a spectrum level left adjoint from any
prespectrum level left adjoint.  The external smash product of spectra
is defined similarly, $E\barwedge E' = L(\ell E\barwedge \ell E')$. 

The reader should have in mind the Thom spaces $TO(V)$ or, using
complex inner product spaces, $TU(V)$ associated to well chosen 
universal $V$-plane bundles.  In contrast to the original definitions
of  \cite{MQR, LMS}, we restrict attention to the linear isometries 
operad $\sL$. There seems to be no useful purpose in
allowing more general operads in this exposition. 

We agree to write $T^{[j]}$ for external $j$-fold smash powers.
An $E_{\infty}$ prespectrum, or $\sL$-prespectrum, $T$ has an action 
of $\sL$ that is
specified by maps of prespectra
\[ \xi_j(f)\colon T^{[j]}\rtarr f^*T \]
or, equivalently, $f_*T^{[j]}\rtarr T$,
for all $f\in \sL(j)$ that are suitably continuous and are 
compatible with the operad structure on $\sL$.  The compatibility 
conditions are that  
$\xi_j(f\ta) = \xi_j(f)\com \ta_*$ for $\ta\in \SI_j$ (where
$\ta$ is thought of as a linear isomorphism $U^j\rtarr U^j$), 
$\xi_1(\id) = \id$, and
\[ \xi_{j_1+\cdots + j_k}(\ga(g;f_1,\cdots, f_k)) = \xi_k(g)
\com (\xi_{j_1}(f_1)\barwedge \cdots \barwedge \xi_{j_k}(f_k))\]
for $g\in\sL(k)$ and $f_r\in\sL(j_r)$.  

For the continuity condition,
let $V=V_1\oplus\cdots\oplus V_j$ and let $A(V,W)\subset \sL(j)$ be the 
subspace of linear isometries $f$ such that $f(V)\subset W$, where
the $V_r$ and $W$ are indexing spaces. We have a map 
$A(V,W)\times V\rtarr  A(V,W)\times W$ of bundles
over $A(V,W)$ that sends $(f,v)$ to $(f, f(v))$.  Its image is a 
subbundle, and we let $T(V,W)$ be the Thom
complex of its complementary bundle.  Define a function
\[ \ze=\ze(V,W)\colon T(V,W)\sma TV_1\sma\cdots\sma TV_j\rtarr TW \]
by 
\[ \ze((f, w),y_1,\cdots, y_j)) = \si(\xi_j(f)(y_1\sma\cdots\sma y_j)\sma w) \]
for $f\in A(V,W)$, $w\in W-f(V)$, and $y_r\in TV_r$; on the right, $\si$ is 
the structure map $T(fV)\sma S^{W-fV}\rtarr TW$.  The functions $\ze(V,W)$
must all be continuous. 

This is a very simple notion. As already noted in \S\ref{isoop}, it is easy to see that 
$\sI$-FSP's and thus Thom prespectra give examples.  However, the continuity condition 
requires a more conceptual description. We want to think of the maps $\xi_j(f)$ 
as $j$-fold products parametrized by $\sL(j)$, and we want to collect
them together into a single global map.  The intuition is that we should 
be able to construct a ``twisted half-smash product prespectrum'' 
$\sL(j)\thp T^{[j]}$ indexed on $U$ by setting
\[ (\sL(j)\thp T^{[j]})(W) = T(V,W)\sma TV_1\sma\cdots \sma TV_j. \]
This doesn't quite make sense because of the various possible choices 
for the $V_r$, but it does work in exactly this way if we choose appropriate 
cofinal sequences of indexing spaces in $U^j$ and $U$.  The resulting
construction $L (-) \ell$ on the spectrum level is independent
of choices.   Another intuition is that we are suitably topologizing 
the disjoint union over $f\in\sL(j)$ of the prespectra $f_*T^{[j]}$ indexed on $U$.

These intuitions are made precise in \cite[VI]{LMS} and, more conceptually
but perhaps less intuitively, \cite[App]{EKMM}.  The construction of 
twisted half-smash products of spectra is the starting point of the EKMM approach to the stable homotopy category \cite{EKMM}, but for now we are 
focusing on earlier work.  With this construction in place, we have an equivalent definition of an 
$\sL$-prespectrum in terms of action maps
\[\xi_j\colon \sL(j)\thp T^{[j]}\rtarr T \]
such that equivariance, unit, and associativity diagrams commute. The
diagrams are exactly like those in the original definition of an action 
of an operad on a space.  

In more detail, and focusing on the spectrum level, the construction of
twisted half-smash products actually gives functors 
\begin{equation}
A\thp E_1\barwedge \cdots \barwedge E_j
\end{equation}
for any spectra $E_r$ indexed on $U$ and any map $A\rtarr \sL(j)$.  There
are many relations among these functors as $j$ and $A$ vary. In
particular there are canonical maps
\begin{equation}\label{butyes} 
\xymatrix{
\sL(k)\thp (\sL(j_1)\thp E^{[j_1]}\barwedge \cdots \barwedge
\sL(j_k)\thp E^{[j_k]}) \ar[d]^{\iso} \\
(\sL(k)\times \sL(j_1)\times \cdots \times \sL(j_k))\thp E^{[j]}
\ar[d]^{\ga\thp\id} \\
\sL(j)\thp E^{[j]},\\}
\end{equation}
where $j= j_1 +\cdots + j_k$.  Using such maps we can make sense out
of the definition of an $E_{\infty}$ ring spectrum in terms of an
action by the operad $\sL$. 

\begin{defn} An $E_{\infty}$ ring spectrum, or $\sL$-spectrum, is a 
spectrum $R$ with an
action of $\sL$ given by an equivariant, unital, and associative system
of maps
\begin{equation}\label{Eringspec}
\sL(j)\thp R^{[j]}\rtarr R.
\end{equation}
\end{defn}

\begin{lem}
If $T$ is an $\sL$-prespectrum, then $LT$ is an $\sL$-spectrum.
\end{lem}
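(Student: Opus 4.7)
The plan is to construct the spectrum-level action maps by applying the spectrification functor $L$ to the prespectrum-level action maps $\xi_j\colon \sL(j)\thp T^{[j]}\rtarr T$ and then identifying the source. Write $R=LT$. First I would recall that the spectrum-level external smash product is defined by $R_1\barwedge R_2 = L(\ell R_1\barwedge \ell R_2)$, and, following the slogan from the excerpt that ``This is the way one constructs a spectrum level left adjoint from any prespectrum level left adjoint,'' the spectrum-level twisted half-smash product $A\thp(-)$ is obtained by the same recipe of applying $L$ after the prespectrum-level construction on $\ell(-)$. In particular,
\[ \sL(j)\thp R^{[j]} \iso L\bigl(\sL(j)\thp (\ell R)^{[j]}\bigr), \]
with the inner operations taken at the prespectrum level.

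Next I would use that $L$ is left adjoint to $\ell$ and hence preserves colimits, while the external smash product and the twisted half-smash product of prespectra are both built out of smash products of spaces and colimits (over indexing spaces in $U^j$ and $U$). Applying $L$ to the unit map $\et\colon T\rtarr \ell LT=\ell R$ and iterating through these constructions then yields a natural isomorphism
\[ L\bigl(\sL(j)\thp T^{[j]}\bigr) \iso L\bigl(\sL(j)\thp (\ell R)^{[j]}\bigr) \iso \sL(j)\thp R^{[j]}. \]
Composing the inverse of this isomorphism with the map $L\xi_j\colon L(\sL(j)\thp T^{[j]})\rtarr LT = R$ produces the desired spectrum-level action maps $\xi_j\colon \sL(j)\thp R^{[j]}\rtarr R$.

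Finally I would verify the equivariance under $\SI_j$, the unit condition $\xi_1(\id)=\id$, and the associativity diagrams involving the canonical comparison maps of \eqref{butyes}. All three follow formally from the corresponding prespectrum-level identities for $T$ by functoriality of $L$ and naturality of the identifications above; no new geometric content is needed. The main obstacle is not the verification of the axioms but rather the bookkeeping that underlies the identification $L(\sL(j)\thp T^{[j]})\iso \sL(j)\thp (LT)^{[j]}$: the twisted half-smash product is constructed via specific cofinal sequences of indexing spaces in $U^j$ and $U$ as in \cite[VI]{LMS}, and one must be careful that $L$ indeed intertwines the prespectrum-level and spectrum-level versions of this construction. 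Once that compatibility is pinned down (for which the key input is that $L$ is a left adjoint and $L\ell\iso\mathrm{Id}$), the promotion of the $\sL$-prespectrum structure on $T$ to an $\sL$-spectrum structure on $LT$ is automatic.
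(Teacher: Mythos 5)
Your proposal is correct and takes essentially the same route as the paper: the paper's entire proof is the observation that the spectrification functor satisfies $L(\sL(j)\thp T^{[j]})\iso \sL(j)\thp (LT)^{[j]}$, which is exactly the key identification you isolate, with the equivariance, unit, and associativity conditions then following formally from those for $T$ by functoriality of $L$ and naturality, just as you say.
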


This holds since the spectrification functor $L\colon \sP\rtarr \sS$ satisfies
\[ L(\sL(j)\thp T^{[j]})\iso \sL(j)\thp (LT)^{[j]}. \]

\section{The monadic interpretation of $E_{\infty}$ ring spectra}\label{monad2}

At this point, we face an unfortunate clash of notations, and for the moment the reader is asked to forget all about prespectra and the spectrification functor $L\colon \sP\rtarr \sS$.  We focus solely on spectra in this section. 

In analogy with \S\ref{monad}, we explain that there are two monads in sight,
both of whose algebras coincide with $E_{\infty}$ ring spectra.\footnote{Again, further categorical perspective is supplied in Appendix A (\S14).} One is in
\cite{LMS}, but the one more relevant to our current explanations is new.

In thinking about external smash products, we took spectra indexed on
zero to be spaces.   Since $\sL(0)$ is the inclusion $0\rtarr U$, the
change of universe functor $\sL(0)\thp (-)$ can and must be interpreted
as the functor $\SI^{\infty}\colon \sT\rtarr \sS$.  Similarly, the zero fold 
external smash power $E^{[0]}$ should be interpreted as the space $S^0$.  
Since $\SI^{\infty}S^0$ is the sphere spectrum $S$, the $0^{th}$ 
structure map in (\ref{Eringspec}) is a map $e\colon S\rtarr R$.  We have 
the same dichotomy as in \S\ref{monad}.  We can either think of the map $e$ 
as preassigned, in which case we think of our ground category as the
category $\sS_e$ of spectra under $S$, 
or we can think of $e =\xi_0$ as part of the structure of an $E_{\infty}$
ring spectrum, in which case we think of our ground category as $\sS$.

In analogy with the space level monad $L_+$ we define a monad $L_+$
on the category of spectra by letting
\[  L_+ E = \bigvee_{j\geq 0} \sL(j)\thp_{\SI_j} E^{[j]}. \]
The $0^{th}$ term is $S$, and $\et\colon S\rtarr L_+E$ is the inclusion.
The product $\mu$ is induced by passage to orbits and wedges from the canonical maps (\ref{butyes}).  

We also have a reduced monad $L$ defined on the category $\sS_e$.  
The monad $L$ on $\sT_e$ is obtained from the
monad $L_+$ on $\sT$ by basepoint identifications.
The construction can be formalized in terms of coequalizer diagrams.
We obtain the analogous monad $L$ on $\sS_e$ by ``base sphere'' 
identifications that are formalized by precisely analogous coequalizer 
diagrams \cite[VII\S3]{LMS}.  In analogy with (\ref{Omore2}), the spectrum level
monads $L$ and $L_+$ are related by a natural 
isomorphism
\begin{equation}\label{Omore3}
L(S\wed E)\iso L_+E.
\end{equation} 
This isomorphism, like (\ref{Omore2}), is monadic.  This means that the 
isomorphisms are compatible with the structure maps of the two monads, 
as is made precise in Definition \ref{moniso}.  
The algebras of $L$ are the same as those of $L_+$, and we have the
following analogue of Propositions \ref{isocats1} and \ref{isocats2}. 

\begin{prop}\label{isocats3} The following categories are isomorphic.
\begin{enumerate}[(i)]
\item The category $\sL[\sS] = \sL[\sS_e]$ of $\sL$-spectra.
\vspace{1mm}
\item The category $L_+[\sS]$ of $L_+$-algebras in $\sS$.
\vspace{1mm}
\item The category $L[\sS_e]$ of $L$-algebras in $\sS_e$.
\end{enumerate}
\end{prop}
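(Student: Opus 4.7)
The plan is to follow the template already established by Propositions \ref{isocats1} and \ref{isocats2}, transposing each step from spaces to spectra with twisted half-smash products replacing cartesian and smash products. The argument breaks into two isomorphisms: (i) $\iso$ (ii), which is a direct unpacking of the definitions, and (ii) $\iso$ (iii), which uses the monadic isomorphism (\ref{Omore3}).

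For (i) $\iso$ (ii), I would begin with an $\sL$-spectrum $R$ with structure maps $\xi_j \colon \sL(j) \thp R^{[j]} \rtarr R$. Since each $\xi_j$ is $\SI_j$-equivariant, it factors through $\sL(j) \thp_{\SI_j} R^{[j]}$, and assembling these over $j \geq 0$ gives a single map $\xi \colon L_+ R \rtarr R$. The unit axiom $\xi \com \et = \id$ matches the condition that $\xi_1(\id, -) = \id$, and the associativity axiom $\xi \com \mu = \xi \com L_+\xi$ matches the operadic associativity of the $\xi_j$ via the canonical coherence maps (\ref{butyes}). The inverse construction restricts an $L_+$-algebra structure to each wedge summand. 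Morphisms in each category are maps of spectra commuting with the respective structure maps, so they coincide tautologically.

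For (ii) $\iso$ (iii), I would use the fact that $S \wed (-) \colon \sS \rtarr \sS_e$ is left adjoint to the forgetful functor $i \colon \sS_e \rtarr \sS$, together with the isomorphism $L(S \wed E) \iso L_+ E$, which is said to be monadic in the sense of Definition \ref{moniso}. Given an $L_+$-algebra $(R, \xi)$ in $\sS$, the restriction of $\xi$ to the $j = 0$ summand $S = \sL(0) \thp R^{[0]}$ is a canonical map $e \colon S \rtarr R$ making $R$ an object of $\sS_e$; under $L(S \wed R) \iso L_+ R$, the action $\xi$ transports to a map $L R \rtarr R$ in $\sS_e$. Monadicity of the isomorphism (compatibility with $\mu$ and $\et$ on the two sides) ensures the unit and associativity axioms for an $L$-algebra hold iff those for the corresponding $L_+$-algebra do, and the process is reversible on morphisms.

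The main obstacle is verifying that the multiplication $\mu \colon L_+ L_+ \rtarr L_+$ really does encode operadic associativity in the required form. This amounts to tracing the canonical coherence isomorphisms (\ref{butyes}) for iterated twisted half-smash products and checking that passage to $\SI_j$-orbits and wedges intertwines them with the operadic structure maps $\ga$ of $\sL$. This is the spectrum-level analogue of the corresponding check for the space-level monads in \S\ref{monad}, and once the formal properties of twisted half-smash products from \cite[VI]{LMS} and \cite[App]{EKMM} are invoked, the verification is routine; the three-way equivalence then falls out in parallel with Propositions \ref{isocats1} and \ref{isocats2}.
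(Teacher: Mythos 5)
Your proposal is correct and follows essentially the same route as the paper, which treats this as the spectrum-level analogue of Propositions \ref{isocats1} and \ref{isocats2}: (i)$\iso$(ii) is the definitional unpacking of equivariance, unitality, and associativity via the coherence maps (\ref{butyes}) defining $\mu$, and (ii)$\iso$(iii) comes from the unit summand $S=\sL(0)\thp R^{[0]}$ together with the base-sphere identifications and the monadic isomorphism (\ref{Omore3}), with the remaining verifications deferred (as in the paper) to \cite[VII\S3]{LMS}. The only point to phrase slightly more carefully is that the $L$-action on $R$ arises by descending $\xi\colon L_+R\rtarr R$ along the canonical quotient $L_+R\rtarr LR$ (the coequalizer encoding the base-sphere identifications), rather than literally transporting along $L(S\wed R)\iso L_+R$, but this is exactly the mechanism you invoke.
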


A central feature of twisted half-smash products is that there is a natural untwisting isomorphism
\begin{equation}\label{untwist}
A\thp (\SI^{\infty} X_1\barwedge \cdots \barwedge \SI^{\infty} X_j)
\iso  
\SI^{\infty}(A_+\sma X_1\sma\cdots X_j).
\end{equation}
for any space $A$ over $\sL(j)$.  Using this, we obtain a monadic natural isomorphism
\begin{equation}\label{newmon}
L_+\SI^{\infty}X \iso \SI^{\infty} L_+X
\end{equation}
relating the space and spectrum level monads $L_+$.

It has become usual to compress notation by defining 
\begin{equation}\label{sigmaplus}
 \SI^{\infty}_+ X = \SI^{\infty}(X_+)
\end{equation}
for a space $X$, ignoring its given basepoint if it has one. 
When $X$ has a (nondegenerate) basepoint, the equivalence 
$\SI(S^0\wed X)\htp S^1\wed \SI X$ implies an 
equivalence\footnote{In contrast with the isomorphisms appearing
in this discussion, this equivalence plays no role in our
formal theory; we only recall it for use in a digression that we are about to insert.}
\begin{equation}\label{homowed}
 \SI^{\infty}_+X\htp S\wed \SI^{\infty}X
\end{equation}
under $S$. With the notation of (\ref{sigmaplus}), the relationship between the space and spectrum level monads $L$ is given by a monadic natural isomorphism
\begin{equation}\label{oldmon}
L\SI^{\infty}_+ X \iso \SI^{\infty}_+ LX.
\end{equation}
(See \cite[VII.3.5]{LMS}). Remarkably, as we shall show in Appendix A (\S14), 
the commutation relations (\ref{Omore3}), (\ref{newmon}), and (\ref{oldmon}) 
are formal, independent of calculational understanding of the monads in question.

We digress to recall a calculationally important 
splitting theorem that is implicit in these formalities.  
Using nothing but (\ref{Omore3}) and (\ref{untwist}) -- (\ref{oldmon}), we find
\begin{eqnarray*}
S\wed \SI^{\infty}LX & \htp &  \SI^{\infty}_+ LX \\
& \iso &  L\SI^{\infty}_+ X\\
& \htp & L(S\wed \SI^{\infty}X)\\
& \iso & L_+\SI^{\infty}X \\
& = &\bigvee_{j\geq 0} \sL(j)\thp_{\SI_j} (\SI^{\infty}X)^{[j]} \\
&\iso & S \wed \bigvee_{j\geq 1} \SI^{\infty}(\sL(j)_+\sma_{\SI_j} X^{(j)}).
\end{eqnarray*}
Quotienting out $S$ and recalling (\ref{DJX}), we obtain a natural equivalence
\begin{equation}\label{splittoo}
\SI^{\infty} LX \htp \bigvee_{j\geq 1} \SI^{\infty}D_jX.
\end{equation}
We may replace $LX$ by $CX$ for any other $E_{\infty}$ operad $\sC$, such
as the Steiner operad, and then $CX\htp QX$ when $X$ is connected.  Thus,
if $X$ is connected,
\begin{equation}
\SI^{\infty} QX\htp \bigvee_{j\geq 1} \SI^{\infty}D_jX.
\end{equation}
This beautiful argument was discovered by Ralph Cohen \cite{RCohen}. 
As explained in \cite[VII\S5]{LMS}, we can exploit the projection
$\sO\times \sL\rtarr \sL$ to obtain a splitting theorem for $OX$ 
analogous to (\ref{splittoo}), where $\sO$ is any operad, not necessarily 
$E_{\infty}$. 

\section{The relationship between $E_{\infty}$ ring spaces and
$E_{\infty}$ ring spectra}\label{spspcompare}

We show that the $0^{th}$ space of an $E_{\infty}$ ring spectrum
is an $E_{\infty}$ ring space.  This observation is at the conceptual 
heart of what we want to convey.  It was central to the 1970's 
applications, but it seems to have dropped off the radar screen,
and this has led to some confusion in the modern literature.  
One reason may be that the only proof ever published is in the original 
source \cite{MQR}, which preceded the definition of twisted half-smash 
products and the full understanding of the category of spectra. Since 
this is the part of \cite{MQR} that is perhaps most obscured by later 
changes of notations and definitions, we give a cleaner updated treatment
that gives more explicit information.

Recall that $\sL_+[\sT]\iso L_+[\sT]$ and $\sL[\sS]\iso L_+[\sS]$ 
denote the categories of $\sL_+$-spaces, or $\sL$-spaces with zero, 
and of $\sL$-spectra, thought of as identified with the categories 
of $L_+$-algebras in $\sT$ and of $L_+$-algebras in $\sS$.
To distinguish, we write $X$ for based spaces, $Z$ for $\sL_+$-spaces, 
$E$ for spectra, and $R$ for $\sL$-spectra. 

\begin{prop}\label{formalQ} The (topological) adjunction
\[  \sS(\SI^{\infty}X,E)\iso \sT(X,\OM^{\infty}E) \]
induces a (topological) adjunction
\[  L_+[\sS](\SI^{\infty}Z,R) \iso L_+[\sT](Z,\OM^{\infty}R). \]
Therefore the monad $Q$ on $\sT$ restricts to a monad $Q$ on 
$L_+[\sT]$ and, when restricted to $\sL$-spectra, the 
functor $\OM^{\infty}$ takes values in algebras over $L_+[\sT]$.
\end{prop}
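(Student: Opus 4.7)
The plan is to lift both $\SI^{\infty}$ and $\OM^{\infty}$ to functors between $L_{+}$-algebra categories using the monadic natural isomorphism $\al\colon L_{+}\SI^{\infty}\iso\SI^{\infty}L_{+}$ of (\ref{newmon}); the stated adjunction will then lift automatically. First, for an $L_{+}$-space $(Z,\xi_Z)$, I would define the $L_{+}$-action on $\SI^{\infty}Z$ as the composite
\[
L_{+}\SI^{\infty}Z\overto{\al}\SI^{\infty}L_{+}Z\overto{\SI^{\infty}\xi_Z}\SI^{\infty}Z.
\]
The content of the monadicity of $\al$---to be spelled out in Appendix A---is precisely that $\al$ intertwines the units $\et$ and multiplications $\mu$ of the two copies of $L_{+}$, and this is exactly what is needed to verify the unit and associativity axioms for the displayed composite.

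Dually, given an $L_{+}$-spectrum $(R,\xi_R)$, I would form the mate $\tilde{\al}\colon L_{+}\OM^{\infty}\rtarr\OM^{\infty}L_{+}$ of $\al$ under the $(\SI^{\infty},\OM^{\infty})$-adjunction, namely
\[
L_{+}\OM^{\infty}R\overto{\et}\OM^{\infty}\SI^{\infty}L_{+}\OM^{\infty}R\overto{\OM^{\infty}\al^{-1}\OM^{\infty}}\OM^{\infty}L_{+}\SI^{\infty}\OM^{\infty}R\overto{\OM^{\infty}L_{+}\epz}\OM^{\infty}L_{+}R,
\]
and define the $L_{+}$-action on $\OM^{\infty}R$ to be $\OM^{\infty}\xi_R\com\tilde{\al}_R$. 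The unit and associativity axioms then follow from the monadicity of $\al$ together with the triangle identities, by the standard categorical lemma relating adjunctions, monads, and mates. A direct diagram chase using the definitions of the lifted actions, the naturality of $\al$, and the triangle identities then shows that if $\tilde{f}\colon\SI^{\infty}Z\rtarr R$ is adjoint to $f\colon Z\rtarr\OM^{\infty}R$, then $\tilde{f}$ is $L_{+}$-equivariant if and only if $f$ is, so that the adjunction restricts as claimed.

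The rest of the proposition is essentially formal. Any lifted adjunction yields a monad on the source algebra category whose underlying functor is the original $Q=\OM^{\infty}\SI^{\infty}$; and the lifted right adjoint tautologically lands in algebras over this monad, with structure map the counit $\OM^{\infty}\epz$, exactly as in the unlifted case. I expect the main obstacle to be the verification in the second step that $\OM^{\infty}\xi_R\com\tilde{\al}_R$ really is an action. This is where the hypothesis that $\al$ is \emph{monadic} does all the work, and it is the only point at which the argument leans on specific properties of $L_{+}$ or of the twisted half-smash product rather than on pure category theory.
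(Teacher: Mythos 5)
Your proposal is correct and follows essentially the same route as the paper: the action on $\SI^{\infty}Z$ is the composite through the monadic isomorphism (\ref{newmon}), your mate $\tilde{\al}$ is exactly the paper's map $\de$ (the general $\de\colon CU\rtarr UD$ of Proposition \ref{Appcute}), and the remaining claims are the same formal consequences the paper invokes. The only slight difference is rhetorical: the paper packages the verification as an instance of the general change-of-monads result in Appendix A rather than as specific properties of $L_{+}$, but the mathematical content is identical.
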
  
\begin{proof} This is a formal consequence of the fact that the
isomorphism (\ref{newmon}) is monadic, as is explained in
general categorical terms in Proposition \ref{Appcute}.
If $(Z,\xi)$ is an $L_+$-algebra, 
then $\SI^{\infty}Z$ is an $L_+$-algebra with structure map
\[ \xymatrix@1{ L_+\SI^{\infty}Z \ar[r]^-{\iso} & 
\SI^{\infty}L_+Z \ar[r]^-{\SI^{\infty}\xi}
& \SI^{\infty}Z.\\} \]
The isomorphism (\ref{newmon}) and the 
adjunction give a natural composite $\de$: 
\[\xymatrix@1{  
L_+\OM^{\infty}E \ar[r]^-{\et} & 
\OM^{\infty}\SI^{\infty}L_+\OM^{\infty}E \ar[r]^-{\iso} 
& \OM^{\infty}L_+\SI^{\infty}\OM^{\infty}E \ar[r]^-{\OM^{\infty}L_+\epz}
& \OM^{\infty}L_+E.\\} \]
If $(R,\xi)$ is an $L_+$-algebra, then 
$\OM^{\infty}R$ is an $L_+$-algebra with structure map
\[ \xymatrix@1{
L_+\OM^{\infty}R \ar[r]^-{\de} & \OM^{\infty}L_+ R\ar[r]^-{\OM^{\infty}\xi} 
& \OM^{\infty}R.\\} \]
Diagram chases show that the unit $\et$ and counit $\epz$ of the
adjunction are maps of $L_+$-algebras when $Z$ and $R$ are $L_+$-algebras.
The last statement is another instance of an already cited categorical triviality
about the monad associated to an adjunction,  
and we shall return to the relevant categorical observations
in the next section.
\end{proof}

In the notation of algebras over operads, this has the following consequence. 

\begin{cor}\label{themguys} The adjunction of Proposition \ref{formalQ} induces an adjunction
\[ \sL[\sS](\SI^{\infty}_+Y,R)\iso \sL[\sU](Y,\OM^{\infty}R) \]
between the category of $\sL$-spaces and the category of $\sL$-spectra.
\end{cor}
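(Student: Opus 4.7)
The plan is to combine Proposition \ref{formalQ} with a lift to operadic algebras of the elementary $(-)_+ \dashv i$ adjunction between $\sU$ and $\sT$. Since $\SI^{\infty}_+ = \SI^{\infty}\com(-)_+$ by (\ref{sigmaplus}) and $\sL[\sS] = L_+[\sS]$ by Proposition \ref{isocats3}, the left-hand side of the claimed isomorphism reads $L_+[\sS](\SI^{\infty}(Y_+),R)$, which Proposition \ref{formalQ} identifies with $L_+[\sT](Y_+,\OM^{\infty}R)$. Thus it suffices to construct a natural isomorphism
\[ L_+[\sT](Y_+, \OM^{\infty}R)\iso \sL[\sU](Y, \OM^{\infty}R). \]

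The main step is to check that $(-)_+\dashv i$ lifts to an adjunction between $\sL[\sU]$ and $\sL_+[\sT]\iso L_+[\sT]$. Given an unbased $\sL$-space $Y$, the space $Y_+$ acquires a canonical $\sL_+$-action making the added basepoint a strict zero: the homeomorphism $A_+\sma B_+\iso (A\times B)_+$ yields a natural isomorphism $L_+^{\sT}(Y_+)\iso (L_+^{\sU}Y)_+$, so the structure map is just the original $\sL$-action with a disjoint basepoint adjoined. Conversely, forgetting the basepoint of an $\sL_+$-space $Z$ produces an unbased $\sL$-space $iZ$, since a map $\sL(j)_+\sma_{\SI_j}Z^{(j)}\rtarr Z$ is precisely the data of an $\SI_j$-equivariant map $\sL(j)\times Z^j\rtarr Z$ for which the basepoint is a strict zero. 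The resulting $\sL$-action on $iZ$ fixes the operadic unit $1\in Z$ as its basepoint under the identification $\sL[\sU]=\sL[\sT]$ of Proposition \ref{isocats1}.

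The key verification is then that the ordinary adjunction bijection $\sU(Y,iZ)\iso \sT(Y_+,Z)$ restricts to a bijection $\sL[\sU](Y,iZ)\iso \sL_+[\sT](Y_+,Z)$. An $\sL_+$-map $Y_+\rtarr Z$ automatically sends the added basepoint to the basepoint of $Z$, and the $\sL_+$-equivariance on $Y_+$ was arranged precisely so as to be equivalent, under the adjunction, to $\sL$-equivariance of the restricted map $Y\rtarr iZ$. I expect no substantive obstacle here; once the structures are laid out correctly the verification is a routine unravelling of definitions. The one caveat worth flagging is that $\OM^{\infty}R$ is being interpreted as an $\sL$-space on the right-hand side of the final isomorphism via $i$ applied to the $\sL_+$-space supplied by Proposition \ref{formalQ}, which is a legitimate move precisely because of the adjunction we have just constructed.
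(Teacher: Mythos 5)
Your proposal is correct and follows essentially the same route as the paper: the paper likewise lifts the elementary $(-)_+ \dashv i$ adjunction between $\sU$ and $\sT$ to an adjunction $\sL_+[\sT](Y_+,Z)\iso \sL[\sU](Y,Z)$ (using that adjoining a disjoint basepoint to an $\sL$-space gives an $\sL_+$-space and that forgetting the basepoint of an $\sL_+$-space gives an $\sL$-space) and then composes with the adjunction of Proposition \ref{formalQ} at $Z=\OM^{\infty}R$. The only difference is the order in which the two adjunctions are composed, which is immaterial.
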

\begin{proof} Recall that the functor $i\colon \sT\rtarr \sU$ given by
forgetting the basepoint induces an isomorphism $\sL[\sT] \iso \sL[\sU]$ 
since maps of $\sL$-spaces must preserve the basepoints given by the operad 
action.  Also, since adjoining a disjoint basepoint $0$ to an $\sL$-space 
$Y$ gives an $\sL$-space with zero, or $\sL_+$-space in $\sT$, while forgetting
the basepoint $0$ of an $\sL_+$-space in $\sT$ gives an $\sL$-space in $\sU$, 
the evident adjunction
\[  \sT(U_+,X) \iso \sU(U,iX) \]
for based spaces $X$ and unbased spaces $U$ induces an adjunction
\[ \sL_+[\sT](Y_+,Z)\iso \sL[\sU](Y,Z) \]
for $\sL$-spaces $Y$ and $\sL_+$-spaces $Z$.  Taking $Z=\OM^{\infty}R$,
the result follows by composing with the adjunction of Proposition \ref{formalQ}.
\end{proof}

Now let us bring the Steiner operads into play.  For an indexing space 
$V\subset U$,
$\sK_V$ acts naturally on $V$-fold loop spaces $\OM^V Y$.  These actions
are compatible for $V\subset W$, and by passage to colimits we obtain
a natural action $\tha$ of the Steiner operad $\sC = \sK_U$ on $\OM^{\infty}E$
for all spectra $E$.  For spaces $X$, we define $\al\colon CX\rtarr QX$ to be
the composite
\begin{equation}\label{alpha}
\xymatrix@1{
CX \ar[r]^-{C\eta} &  C\OM^{\infty}\SI^{\infty}X \ar[r]^-{\tha} &
\OM^{\infty}\SI^{\infty}X = QX.}\\  
\end{equation}
Another purely formal argument shows that $\al$ is a map of monads in $\sT$ 
\cite[5.2]{Geo}.  This observation is central to the entire theory.

We have seen in Propositions \ref{CmonSpace} and \ref{formalQ} that $C$ and $Q$
also define monads on $\sL_+[\sT]$, and we have the following crucial compatibility.  

\begin{prop}\label{CmonSpace2} The map $\al\colon C\rtarr Q$ of 
monads on $\sT$ restricts to a map of monads on $\sL_+[\sT]$.
\end{prop}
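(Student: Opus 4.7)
The assertion splits into two parts: that for every $\sL_+$-space $Z$ the space level map $\alpha_Z\colon CZ\rtarr QZ$ is a morphism in $\sL_+[\sT]$, and that as a natural transformation on $\sL_+[\sT]$ it satisfies the two monad map identities $\alpha\circ\mu_C=\mu_Q\circ(\alpha\alpha)$ and $\alpha\circ\et_C=\et_Q$. The second part will be automatic: the forgetful functor $\sL_+[\sT]\rtarr\sT$ is faithful, and both identities already hold in $\sT$ since $\alpha$ is a map of monads on $\sT$ by \cite[5.2]{Geo}. The naturality squares of $\alpha$ relative to morphisms of $\sL_+$-spaces transfer from $\sT$ in the same way. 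So the content is the first part.

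To verify that $\alpha_Z$ is $\sL_+$-equivariant, I will exploit the factorization $\alpha_Z=\tha_{\SI^{\infty}Z}\com C\et_Z$ from (\ref{alpha}). The factor $C\et_Z\colon CZ\rtarr CQZ$ is automatically $\sL_+$-equivariant, because $\et\colon Z\rtarr QZ$ is a map of $\sL_+$-spaces by Proposition \ref{formalQ} and $C$ restricts to an endofunctor of $\sL_+[\sT]$ by Proposition \ref{CmonSpace}. Everything therefore reduces to showing that, for any $\sL$-spectrum $E$, the natural Steiner action
\[ \tha\colon C\OM^{\infty}E \rtarr \OM^{\infty}E \]
is itself a map of $\sL_+$-spaces, when $C\OM^{\infty}E$ is given the $\sL_+$-structure of Proposition \ref{CmonSpace} and $\OM^{\infty}E$ is given the $\sL_+$-structure of Proposition \ref{formalQ}.

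This last step is the heart of the argument, and the main obstacle. I would unwind $\tha$ as the colimit over indexing spaces $V\subset U$ of the maps $\tha_V\colon \sK_V(j)\times (\OM^V E_V)^j\rtarr \OM^V E_V$ built at the end of \S\ref{Steiner} from $\pi\colon \sK_V\rtarr \text{Emb}_V$, and unwind the $\sL_+$-action on $\OM^{\infty}E$ from the spectrum level structure maps $\xi\colon \sL(k)\thp E^{[k]}\rtarr E$ via the natural transformation $\de$ of Proposition \ref{formalQ} and the untwisting isomorphism (\ref{untwist}). The required distributivity diagram (\ref{ActI}) for $\OM^{\infty}E$ then reduces, on passage through these unwindings, to the fact that the maps $\la$ of (\ref{lambda2}) couple the Steiner and linear isometries structures: the little embeddings $f_{r,s}$ used to form the additive Steiner action, conjugated and pairwise producted by a linear isometry $g$, yield precisely the little embeddings used to form the additive action on the image of the multiplicative action by $g$. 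Together with the built-in strict-zero behavior of any $\sL_+$-action, this makes diagram (\ref{ActI}) commute for $\OM^{\infty}E$.

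Granting this compatibility, $\tha_{\SI^{\infty}Z}$ is $\sL_+$-equivariant, hence so is $\alpha_Z=\tha_{\SI^{\infty}Z}\com C\et_Z$, and the reduction in the first paragraph finishes the proof. The argument is essentially the $\OM^{\infty}$-avatar of the statement, central to \S\ref{ringspace} and \S\ref{Steiner}, that the zeroth space of an $\sL$-spectrum is a $(\sC,\sL)$-space.
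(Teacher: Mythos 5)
Your proposal is correct and follows essentially the same route as the paper's own sketch: factor $\al = \tha\com C\et$ via (\ref{alpha}), dispose of $C\et$ using that $\et$ is an $\sL_+$-map (Proposition \ref{formalQ}) and that $C$ restricts to $\sL_+[\sT]$ (Proposition \ref{CmonSpace}), and reduce the heart of the matter to showing that the Steiner (equivalently, via pullback along $\pi$, the embeddings) action on $\OM^{\infty}$ of an $\sL$-spectrum is a map of $\sL_+$-spaces. Your closing compatibility argument through (\ref{lambda2}) and passage to colimits is exactly the content of the argument the paper imports verbatim from \cite[VII.2.4]{MQR}.
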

\begin{proof}[Sketch proof] 
We must show that $\al\colon CX\rtarr QX$ is a map of $\sL_+$-spaces
when $X$ is an $\sL_+$-space.  Since it is clear by naturality that
$C\et\colon CX\rtarr CQX$ is a map of $\sL_+$-spaces, it suffices to
show that $C\tha\colon CQX\rtarr QX$ is a map of $\sL_+$-spaces.  We
may use embeddings operads rather than Steiner operads since the 
action of $\sK_V$ on $\OM^VY$ is obtained by pullback of 
the action of $\text{Emb}_V$.  The argument given for the 
little convex bodies operad in \cite[VII.2.4 (p. 179)]{MQR} applies
verbatim here.  It is a passage to colimits argument similar to that
sketched at the end of \S\ref{Steiner}. 
\end{proof}

Recall that $(\sC,\sL)$-spaces are the same as $C$-algebras in $\sL_+[\sT]$,
and these are our $E_{\infty}$ ring spaces.  Similarly, our $E_{\infty}$ ring
spectra are the same as $\sL$-spectra, and the $0^{th}$ space functor takes
$\sL$-spectra to $Q$-algebras in $\sL_+[\sT]$.  By pullback along $\al$,
this gives the following promised conclusion.

\begin{cor} 
The $0^{th}$ spaces of $E_{\infty}$ ring spectra are
naturally $E_{\infty}$ ring spaces.
\end{cor}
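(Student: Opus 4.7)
The plan is to assemble the result directly from the three preceding propositions, exploiting the general categorical principle that a map of monads $\al\colon C\rtarr Q$ induces a pullback functor from $Q$-algebras to $C$-algebras.

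First I would fix notation: by Proposition \ref{CmonSpace}, an $E_{\infty}$ ring space is precisely a $C$-algebra in the category $\sL_+[\sT]$, and by Proposition \ref{isocats3}, an $E_{\infty}$ ring spectrum $R$ is an $L_+$-algebra in $\sS$. So the task reduces to producing, naturally in $R$, a $C$-algebra structure on $\OM^{\infty}R$ in $\sL_+[\sT]$.

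Next I would invoke Proposition \ref{formalQ}: the adjunction $(\SI^{\infty},\OM^{\infty})$ lifts to an adjunction between $L_+[\sS]$ and $L_+[\sT]$, and consequently $\OM^{\infty}R$ acquires a canonical $Q$-algebra structure in $\sL_+[\sT]$, where $Q$ is regarded here as a monad on $\sL_+[\sT]$ (not merely on $\sT$). Then I would apply Proposition \ref{CmonSpace2}, which asserts that $\al\colon C\rtarr Q$ is a map of monads on $\sL_+[\sT]$. Pulling back the $Q$-action along $\al$ gives $\OM^{\infty}R$ the structure of a $C$-algebra in $\sL_+[\sT]$, i.e., of an $E_{\infty}$ ring space. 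Naturality is automatic, since each step (the adjunction lift, the monad map, and pullback of algebra structures) is natural in $R$.

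There is essentially no obstacle remaining: all of the real work has been done in Propositions \ref{CmonSpace}, \ref{formalQ}, and \ref{CmonSpace2}. The only point that deserves an explicit sentence is the reminder that pullback along a map of monads is a functor from $Q$-algebras to $C$-algebras, which is a standard categorical triviality (the $C$-action on $\OM^{\infty}R$ is the composite $C\OM^{\infty}R \xrightarrow{\al} Q\OM^{\infty}R \xrightarrow{\xi} \OM^{\infty}R$). Thus the corollary falls out in a single line once the three preceding results are in hand.
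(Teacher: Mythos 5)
Your argument is correct and is exactly the paper's own proof: the paper likewise deduces the corollary by noting that $\OM^{\infty}$ takes $\sL$-spectra to $Q$-algebras in $\sL_+[\sT]$ (Proposition \ref{formalQ}), that $\al\colon C\rtarr Q$ is a map of monads on $\sL_+[\sT]$ (Proposition \ref{CmonSpace2}), and that pulling back along $\al$ yields $C$-algebras in $\sL_+[\sT]$, which are the $(\sC,\sL)$-spaces of Proposition \ref{CmonSpace}. Nothing is missing.
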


In particular, the $0^{th}$ space of an $E_{\infty}$ ring spectrum is both
a $\sC$-space and an $\sL$-space with $0$.  The interplay between the Dyer-Lashof homology operations constructed from the two operad actions is essential to the caculational applications, and for that we must 
use all of the components.  However, to apply infinite loop space theory
using the multiplicative operad $\sL$, we must at least delete the 
component of $0$, and it makes sense to also delete the other non-unit 
components.

\begin{defn}\label{GLE} The $0^{th}$ space $R_0$ of an (up to homotopy) ring spectrum $R$ is an (up to homotopy) ring space, and $\pi_0(\OM^{\infty}R)$ is a ring.  Define $GL_1R$ to be the subspace of $R_0$ that 
consists of the components of the unit elements.  Define $SL_1R$ to be the 
component of the identity 
element.\footnote{To repeat, these spaces were introduced in \cite{MQR}, where they were denoted 
$FR$ and $SFR$.}
For a space $X$, $[X_+,GL_1R]$ is the group of units in the (unreduced) cohomology 
ring $R^0(X)$.
\end{defn}

\begin{cor}\label{GLR} If $R$ is an $E_{\infty}$ ring spectrum, then 
the unit spaces $GL_1R$ and $SL_1R$ are $\sL$-spaces.
\end{cor}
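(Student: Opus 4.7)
The plan is to deduce this directly from the previous corollary. By that result, $R_0 = \OM^{\infty}R$ is a $(\sC,\sL)$-space, and in particular we have a multiplicative $\sL_+$-action
\[ \xi\colon \sL(k)_+\sma R_0^{(k)}\rtarr R_0 \]
with $1=\xi_0(\ast)\in R_0$ as the multiplicative operadic unit and $0$ a strict zero. Equivalently, forgetting the basepoint $0$, $R_0$ (minus $\{0\}$ where appropriate) carries an unbased $\sL$-action, and the induced structure on $\pi_0R_0$ is precisely that of the commutative ring $\pi_0R$ (with unit $[1]$).

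The key step is to observe that both $GL_1R$ and $SL_1R$ are unions of path components of $R_0$ containing $1$, and that the multiplicative $\sL$-action preserves such unions when they are closed under multiplication on $\pi_0$. Since $\sL(k)$ is path-connected (indeed contractible), for any $g\in\sL(k)$ and $x_1,\dots,x_k\in R_0$, the homotopy class $[\xi(g;x_1,\dots,x_k)]\in\pi_0R_0$ is independent of $g$ and equal to the ring-theoretic product $[x_1]\cdots [x_k]$. Hence if each $x_i\in GL_1R$, so that each $[x_i]$ is a unit in $\pi_0R$, the product $[x_1]\cdots[x_k]$ is again a unit and $\xi(g;x_1,\dots,x_k)\in GL_1R$. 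Similarly, if each $x_i\in SL_1R$, then $[x_i]=[1]$ and $[\xi(g;x_1,\dots,x_k)]=[1]$, so $\xi(g;x_1,\dots,x_k)\in SL_1R$. Thus $\xi$ restricts to maps
\[ \sL(k)\times (GL_1R)^k\rtarr GL_1R \qquad \text{and}\qquad \sL(k)\times (SL_1R)^k\rtarr SL_1R. \]

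These restrictions inherit the equivariance, unit, and associativity diagrams from those of $\xi$ on $R_0$, and the unit map $\sL(0)=\ast \rtarr GL_1R$, $SL_1R$ picks out the point $1$, which lies in both subspaces. This gives $GL_1R$ and $SL_1R$ the structure of $\sL$-spaces, as required. (Note that we get $\sL$-spaces rather than $\sL_+$-spaces because $0\notin GL_1R$, so there is no strict zero to keep track of; the additive $\sC$-action on $R_0$ does not restrict to these subspaces.)

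The main obstacle — really the only nontrivial point — is the verification that the $\sL$-action preserves unit components, and this is essentially free from the connectedness of the parameter spaces $\sL(k)$ together with the fact that $\pi_0R$ is a ring in which units are closed under multiplication. No further analysis of the geometry of the linear isometries operad is needed.
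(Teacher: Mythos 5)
Your argument is correct and is exactly the one the paper intends: the corollary is stated there as an immediate consequence of the fact that $\OM^{\infty}R$ is a $(\sC,\sL)$-space, the multiplicative $\sL$-action restricting to the unit components because the spaces $\sL(k)$ are connected and units (respectively the class of $1$) are closed under multiplication in $\pi_0R$. Nothing further is needed.
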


Again, we emphasize how simply and naturally these definitions fit together.

\section{A categorical overview of the recognition principle}\label{phil}

The passage from space level to spectrum level information through the
black box of (\ref{DIAG}) admits a simple conceptual outline. We explain 
it here.\footnote{I am indebted to Saunders Mac\,Lane, Gaunce Lewis, and 
Matt Ando for ideas I learned from them some thirty-five years apart.}

We consider two categories, $\sV$ and $\sW$, with an adjoint pair
of functors $(\XI, \LA)$ between them.\footnote{$\XI$ is the capital
Greek letter
Xi; $\XI$ and $\LA$ are meant to look a little like $\SI$ and $\OM$.}  We write 
\[ \et\colon \text{Id}\rtarr \LA\XI \ \ \text{and}\ \ \epz\colon 
\XI\LA \rtarr \text{Id} \]
for the unit and counit of the adjunction.  The reader should be
thinking of $(\SI^n,\OM^n)$, where $\sV = \sT$ and $\sW$ is the
category of $n$-fold loop sequences $\{\OM^iY|0\leq i\leq n\}$ and 
maps $\{\OM^if|0\leq i\leq n\}$.  This is a copy of $\sT$, but we
want to remember how it encodes $n$-fold loop spaces.  It is analogous
and more relevant to the present theory to think of 
$(\SI^{\infty},\OM^{\infty})$, where $\sV= \sT$ and $\sW = \sS$.  

As we have already noted several times, we have the monad
\[  (\LA\XI, \mu , \et) \]
on $\sV$, where $\mu = \LA\epz\XI$.  We also have a right action of the 
monad $\LA\XI$ on the 
functor $\XI$ and a left action of $\LA\XI$ on $\LA$.  These are given by
the natural maps 
\[\epz \XI\colon \XI\LA\XI\rtarr \XI  \ \ \text{and}\ \  
\LA\epz\colon \LA\XI\LA\rtarr \LA.  \]
Actions of monads on functors satisfy unit and associativity diagrams just 
like those that define the action of a monad on an object.  If we think of
an object $X\in\sV$ as a functor from the trivial one object category 
$\ast$ to $\sV$, then an action of a monad on the object $X$ is the same as
a left action of the monad on the functor $X$.

Now suppose that we also have some monad $C$ on $\sV$ and a map of monads
$\al\colon C\rtarr \LA\XI$.  By pullback, we inherit a right action 
of $C$ on $\XI$ and a left action of $C$ on $\LA$, which we denote by
$\rh$ and $\la$.  Thus 
\[ \rh = \epz\XI\com \XI\al\colon \XI C\rtarr \XI \ \ \text{and} \ \ 
 \la = \LA\epz\com \al\LA\colon C\LA\rtarr \LA.\]
For a $C$-algebra $X$ in $\sV$, we seek an object $\bE X$ in $\sW$ such that
$X$ is weakly equivalent to $\LA \bE X$ as a $C$-algebra.  This is the general
situation addressed by our black box, and we first remind ourselves of how 
we would approach the problem if were looking for a categorical analogue.
We will reprove a special case of Beck's monadicity theorem \cite[VI\S7]{Mac}, 
but in a way that suggests how to proceed homotopically. 

Assume that $\sW$ is cocomplete.  For any right $C$-functor 
$\PS\colon \sV\rtarr \sW$ with right action $\rh$ and left $C$-functor $\PH\colon \sU\rtarr \sV$ with left action $\la$, where $\sU$ is any other category, we have the monadic tensor 
product $\PS\otimes_C \PH\colon \sU\rtarr \sW$ that is defined on objects
$U\in \sU$ as the coequalizer displayed in the diagram
\begin{equation}\label{coequal1} \xymatrix@1{ (\PS C \PH)(U)
\ar[r]<.5ex>^-{\rh\PH} \ar[r]<-.5ex>_-{\PS\la}
& (\PS\PH)(U)\ar[r] & (\PS\otimes_C \PH)(U).\\} 
\end{equation}
We are interested primarily in the case when $\sU = \ast$ and  $\PH = X$ for
a $C$-algebra $(X,\xi)$ in $\sV$, and we then write $\PS\otimes_C X$. 
Specializing to the case $\PS = \XI$, this is the coequalizer in $\sW$
displayed in the diagram
\begin{equation}\label{coequal}
 \xymatrix@1{\XI C X\ar[r]<.5ex>^-{\rh} \ar[r]<-.5ex>_-{\XI\xi}
& \XI X\ar[r] & \XI\otimes_CX.\\} 
\end{equation}
For comparison, we have the canonical split coequalizer
\begin{equation}\label{coequal2}
 \xymatrix@1{CC X\ar[r]<.5ex>^-{\mu} \ar[r]<-.5ex>_-{C\xi}
& C X\ar[r]^{\xi} & X\\} 
\end{equation}
in $\sV$, which is split by $\et_{CX}\colon CX\rtarr CCX$ and 
$\et_X\colon X\rtarr CX$. 

Let $C[\sV]$ denote the category of $C$-algebras in $\sV$. Then 
$\XI\otimes_C(-)$ is a functor $C[\sV]\rtarr \sW$, and our original
adjunction restricts to an adjunction
\begin{equation}\label{cutead}
 \sW(\XI\otimes_CX, Y) \iso C[\sV](X,\LA Y).
\end{equation}
Indeed, consider a map $f\colon X\rtarr \LA Y$ of $C$-algebras, so that
$f\com \xi = \la\com Cf$.  Taking its adjoint $\tilde f\colon \XI X\rtarr Y$,
we see by a little diagram chase that it equalizes the pair of maps
$\XI \xi$ and $\rh$ and therefore induces the required adjoint map 
$\XI\otimes_CX\rtarr Y$.  

This construction applies in particular to the monad $\LA\XI$, and for
the moment we take $C = \LA\XI$.  The Beck monadicity theorem says that
the adjunction (\ref{cutead}) is then an adjoint equivalence under appropriate
hypotheses, which we now explain. 
 
Consider those parallel pairs of arrows 
$(f,g)$ in the diagram 
\[ \xymatrix@1{X\ar[r]<.5ex>^-{f} \ar[r]<-.5ex>_-{g}
& Y\ar@{-->}[r]^-{h} & Z\\} \]
in $\sW$ such that there exists a split coequalizer diagram
\[ \xymatrix@1{\LA X\ar[r]<.5ex>^-{\LA f} \ar[r]<-.5ex>_-{\LA g}
& \LA Y\ar[r]^-{j} & V\\} \]
in $\sV$.  Assume that $\LA$ preserves and reflects coequalizers of such
pairs $(f,g)$ of parallel arrows. Preservation means that if $h$ is a
coequalizer of $f$ and $g$, then $\LA h\colon \LA Y\rtarr \LA Z$ is 
a coequalizer of $\LA f$ and $\LA g$. It follows that there is a unique
isomorphism $i\colon V\rtarr \LA Z$ such that $i\com j = \LA h$. Reflection means 
that there is a coequalizer $h$ of $f$ and $g$ and an isomorphism 
$i\colon V\rtarr \LA Z$ such that $i\com j = \LA h$.

By preservation, if $(X,\xi)$ is a $C$-algebra, then 
$\et\colon X\rtarr \LA(\XI\otimes_{C} X)$ must be an isomorphism because application 
of $\LA$ to the arrows $\rh = \epz \XI$ and $\XI\xi$ of 
(\ref{coequal}) gives the pair of maps that have the split coequalizer 
(\ref{coequal2}). By reflection, 
if $Y$ is an object of $\sW$, then $\epz\colon \XI\otimes_{C} \LA Y\rtarr Y$ 
must be an isomorphism since if we apply $\LA$ to the coequalizer 
(\ref{coequal}) with $X=\LA Y$ we obtain the split coequalizer (\ref{coequal2}) 
for the $C$-algebra $\LA Y$.  
This proves that the adjunction (\ref{cutead}) is an adjoint equivalence.

Returning to our original map of monads $\al\colon C\rtarr \LA\XI$, but 
thinking homotopically, one might hope that $\LA(\XI\otimes_C X)$ is 
equivalent to $X$ under reasonable hypotheses.  However, since coequalizers usually behave badly homotopically, we need a homotopical approximation.  Here thinking model categorically seems more of a hindrance than a help, and we instead use the two-sided monadic bar construction of \cite{Geo}.  It can be defined in the
generality of (\ref{coequal1}) as a functor
\[ B(\PS,C,\THA)\colon \sU\rtarr \sW,\]
but we restrict attention to the case of a $C$-algebra $X$, where it is
\[ B(\PS,C,X). \]

We have a simplicial object $B_*(\PS,C,X)$ in $\sW$ whose object of 
$q$-simplices is 
\[ B_q(\PS,C,X) = \PS C^q X.\]
The right action $\PS C\rtarr \PS$ induces the $0^{th}$ face map, the
map $C^{i-1}\mu$, $1\leq i <q$, induces the $i^{th}$ face map, and the action
$CX\rtarr X$ induces the $q^{th}$ face map.  The maps $C^i\et$, $0\leq i\leq q$
induce the degeneracy maps.  We need to realize simplicial objects $Y_*$ in
$\sV$ and $\sW$ to objects of $\sV$ and $\sW$, and we need to do so compatibly. For that, we need a covariant standard simplex functor 
$\DE^*\colon \DE\rtarr \sV$, where $\DE$ is the standard category of finite 
sets $n$ and monotonic maps.\footnote{Equivalently, $n$ is the ordered set
$0<1<\cdots <n$, and maps preserve order.}  We compose with $\XI$ to obtain a standard
simplex functor $\DE^*\colon \DE\rtarr \sW$.  We define
\[ |X_*|_{\sV} = X_*\otimes_{\DE} \DE^* \] 
for simplicial objects $X_*$ in $\sV$, and similarly for $\sW$.  In the 
cases of interest, realization is a left adjoint. We define
\[ B(\PS,C,X) = |B_*(\PS,C,X)|. \]

Commuting the left adjoint $C$ past realization, we find that 
\begin{equation}
|CX_*|_{\sV}\iso C|X_*|_{\sV}
\end{equation}
and conclude that the realization of a simplicial 
$C$-algebra is a $C$-algebra.  The iterated action map 
$\xi^{q+1}\colon C^{q+1}X\rtarr X$ gives a map $\epz_*$ from $B_*(C,C,X)$ to 
the constant simplicial object at $X$.  Passing to realizations, 
$\epz_*$ gives a natural map of $C$-algebras $\epz\colon B(C,C,X)\rtarr X$.
Forgetting the $C$-action, $\epz_*$ is a simplicial homotopy equivalence in the category of simplicial objects 
in $\sW$, in the combinatorial sense that is defined for simplicial objects in any category.  In reasonable situations, for example categories tensored over spaces, passage to realization converts this to a homotopy equivalence in 
$\sW$.  Commuting coequalizers past realization, we find
\begin{equation}
B(\PS,C,X)\iso \PS\otimes_{C} B(C,C,X).
\end{equation}

This has the same flavor as applying cofibrant approximation to $X$ and then
applying $\PS\otimes_C (-)$, but the two-sided bar construction has considerable
advantages. For example, in our topological situations, it is a continuous 
functor, whereas cofibrant approximations generally are not.   Also, starting from a general object $X$, one could not expect something as strong as an underlying homotopy equivalence from a cofibrant approximation $X'\rtarr X$.  More fundamentally, the functoriality in all three variables is central to 
the arguments.  Presumably, for good model categories $\sV$, if one restricts 
$X$ to be cofibrant in $\sV$, then $B(C,C,X)$ is cofibrant in $\sV[\sT]$, at least
up to homotopy equivalence, and thus can be viewed as an especially nice substitute
for cofibrant approximation, but I've never gone back to work out such model categorical 
details.\footnote{This exercise has recently been carried out in the five author paper \cite{five}.
It is to be emphasized that nothing in the outline we are giving simplifies in the slightest.  Rather, 
the details sketched here are reinterpreted model theoretically.  As anticipated, the essential point is
to observe that $B(C,C,X)$ is of the homotopy type of a cofibrant object when $X$ is cofibrant.}

Now the black box works as follows.  We take $\sV$ and $\sW$ to be categories
with a reasonable homotopy theory, such as model categories.  Our candidate 
for $\bE X$ is $B(\XI,C,X)$.  There are three steps that are needed to obtain an
equivalence between a suitable $C$-algebra $X$ and $\LA \bE X$.   The fundamental one is to prove an approximation theorem to the effect that $C$ is a homotopical approximation to $\LA\XI$. This step has nothing to do with monads, 
depending only on the homotopical properties of the comparison map $\al$.

\begin{step}\label{step1}
For suitable objects $X\in \sV$, $\al\colon CX\rtarr \LA \XI X$ is a weak
equivalence.
\end{step}

The second step is a general homotopical property of realization 
that also has nothing to do with the monadic framework. It implies 
that the good homotopical behavior of $\al$ is preserved by various
maps between bar constructions.

\begin{step}\label{step2} For suitable simplicial objects $Y_*$ and $Y'_*$ in
$\sW$, if $f_*\colon Y_*\rtarr Y'_*$ is a map such that each $f_q$ 
is a weak equivalence, then $|f_*|$ is a weak equivalence in $\sW$,
and similarly for $\sV$.
\end{step}

In the space level cases of interest, the weaker property of being a group completion will generalize being a weak equivalence in Steps \ref{step1} and 
\ref{step2}, but we defer discussion of that until the next section. 

The third step is an analogue of commuting $\LA$ past coequalizers
in the categorical argument we are mimicking. It has two parts, one
homotopical and the other monadic.

\begin{step}\label{step3}
For suitable simplicial objects $Y_*$ in $\sW$, the canonical natural map
\[ \ze\colon |\LA Y_*|_{\sV}\rtarr \LA | Y_*|_{\sW} \] 
is both a weak equivalence and a map of $C$-algebras.
\end{step}

Here $\LA Y_*$ is obtained by applying $\LA$ levelwise to simplicial
objects. To construct the canonical map $\ze$, we first obtain
a natural isomorphism
\begin{equation}
\XI |X_*|_{\sV}\iso |\XI X_*|_{\sW}
\end{equation}
by commuting left adjoints, where $X_*$ is a simplicial object in $\sV$.  
Applying this with  $X_*$ replaced by $\LA Y_*$ we obtain
\[ |\epz|_{\sW}\colon \XI |\LA Y_*|_{\sV}\iso |\XI \LA Y_*|_{\sW}
\rtarr |Y_*|_{\sW}. \]
Its adjoint is the required natural map 
$\ze\colon |\LA Y_*|_{\sV}\rtarr \LA|Y_*|_{\sW}$.

Assuming that these steps have been taken, the black box works as follows 
to relate the homotopy categories of $C[\sV]$ and $\sW$.
For a $C$-algebra $X$ in $\sV$, we have the diagram of maps of $C$-algebras
\[  \xymatrix@1{ 
X & B(C,C,X) \ar[l]_-{\epz} \ar[rr]^{B(\al,\id,\id)} 
& & B(\LA\XI,C,X) \ar[r]^-{\ze} & \LA B(\XI,C,X) = \LA \bE X\\} \]
in which all maps are weak equivalences (or group completions).  
On the left, we have a map, but not a $C$-map, $\et\colon X\rtarr B(C,C,X)$ 
which is an inverse homotopy equivalence to $\epz$.  We 
also write $\et$ for the resulting composite $X\rtarr \LA \bE X$.
This is the analogue of the map $\et\colon X\rtarr \LA(\XI\otimes_{C} X)$ 
in our categorical sketch.  

For an object $Y$ in $\sW$, observe that 
\[ B_q(\XI,\LA\XI,\LA Y) = (\XI\LA)^{q+1}Y \]
and the maps $(\XI\LA)^{q+1}\rtarr Y$ obtained by iterating $\epz$
give a map of simplicial objects from $B_*(\XI,\LA\XI,\LA Y)$ to 
the constant simplicial object at $Y$.  On passage to realization,
we obtain a composite natural map
\[ \xymatrix@1{
\bE\LA Y = B(\XI,C,\LA Y) \ar[rr]^-{B(\id,\al,\id)}
& & B(\XI,\LA\XI,\LA Y)\ar[r]^-{\epz} & Y,\\}\]
which we also write as $\epz$. 
This is the analogue of $\epz\colon \XI\otimes_{C} \LA Y\rtarr Y$ in our
categorical sketch.  We have the following commutative diagram in which 
all maps except the top two are weak equivalences, hence they are too. 
\[  \xymatrix{ 
\LA \bE \LA Y = \LA B(\XI,C,\LA Y) \ar[rr]^-{\LA B(\id,\al,\id)}
& & \LA B(\XI,\LA\XI,\LA Y) \ar[r]^-{\LA \epz} & \LA Y\\
B(\LA \XI,C,\LA Y) \ar[rr]_-{B(\id,\al,\id)} \ar[u]^{\ze}
& & B(\LA \XI,\LA\XI,\LA Y) \ar[u]_{\ze} \ar[ur]_{\epz} & \\} \]
We do not expect $\LA$ to reflect weak equivalences in general, so we
do not expect $\bE\LA Y\htp Y$ in general, but we do expect this on suitably
restricted $Y$.  We conclude that the adjunction (\ref{cutead}) induces 
an adjoint equivalence of suitably restricted homotopy categories.

\section{The additive and multiplicative infinite loop space 
machine}\label{machines}

The original use of this approach in \cite{Geo} took $(\XI,\LA)$ to be 
$(\SI^n,\OM^n)$ and $C$ to be the monad associated to the little 
$n$-cubes operad $\sC_n$.  It took $\al$ to be the map of monads 
given by the composites $\tha\com C_n\et\colon C_nX\rtarr \OM^n\SI^n X$.
For connected $\sC_n$-spaces $X$, details of all steps may be found in 
\cite{Geo}. 

For the non-connected case, we say that an $H$-monoid\footnote{This is
just a convenient abbreviated way of writing homotopy associative $H$-space.} is
grouplike if $\pi_0(X)$ is a group under the induced multiplication. We
say that a map $f\colon X\rtarr Y$ between homotopy commutative $H$-monoids
is a group completion if $Y$ is grouplike and two things hold.  First, 
$\pi_0(Y)$ is the group completion of $\pi_0(X)$ in the sense that any map 
of monoids from $\pi_0(X)$ to a group $G$ factors uniquely through a group 
homomorphism $\pi_0(Y)\rtarr G$. Second, for any commutative ring of coefficients, 
or equivalently any field of coefficients, the homomorphism 
$f_*\colon H_*(X)\rtarr H_*(Y)$ of graded commutative rings is a localization 
(in the classical algebraic sense) at the submonoid $\pi_0(X)$ of $H_0(X)$.  
That is, $H_*(Y)$ is $H_*(X)[\pi_0(X)^{-1}]$.  

For $n\geq 2$,  $\al_n$ is a group 
completion for all $\sC_n$-spaces $X$ by calculations of Fred Cohen
\cite{CLM} or by an argument due to Segal \cite{Seg1}.  Thus $\al_n$ is a 
weak equivalence for all grouplike $X$.  This gives Step \ref{step1}, and 
Steps \ref{step2} and \ref{step3} are dealt with in \cite{MayPer}.  

As Gaunce Lewis pointed out to me many years ago and Matt Ando, et al, rediscovered
\cite{five}, in the stable case $n=\infty$ we can compare spaces and spectra directly. 
We take $\sV = \sT$ and $\sW = \sS$, we take $(\XI,\LA)$ to be 
$(\SI^{\infty},\OM^{\infty})$, and we take $C$ to be the monad associated
to the Steiner operad (for $U = \bR^{\infty}$); in the additive theory, we 
could equally well use 
the infinite little cubes operad $\sC_{\infty}$.  As recalled in
(\ref{alpha}), we have a map of monads $\al\colon C\rtarr Q$.  The 
calculations needed to prove that $\al$ is a group completion preceded 
those in the deeper case of finite $n$ \cite{CLM}, and we have Step \ref{step1}.
Step \ref{step2} is given for spaces in \cite[Ch. 11]{Geo} and 
\cite[App.]{MayPer} and for spectra in \cite[Ch X]{EKMM}; see 
\cite[X.1.3 and X.2.4]{EKMM}. 

We need to say a little about Step \ref{step3}.  For simplicial spaces
$X_*$ such that each $X_q$ is connected (which holds when we apply
this step) and which satisfy the usual (Reedy) cofibrancy condition
(which follows in our examples from the assumed nondegeneracy of 
basepoints), the map
$\ze\colon |\OM X_*|\rtarr \OM |X_*|$ is a weak equivalence by 
\cite[12.3]{Geo}.  That is the hardest thing in \cite{Geo}.  Moreover, the 
$n$-fold iterate $\ze^n\colon |\OM^n X_*|\rtarr \OM^n |X_*|$ is a map of 
$\sC_n$-spaces by \cite[12.4]{Geo}. The latter argument works equally well 
with $\sC_n$ replaced by the Steiner operad $\sK_{\bR^n}$, so we have
Step \ref{step3} for each $\OM^n$.  For sufficiently nice simplicial spectra 
$E_*$, such as those relevant here, the canonical map 
\begin{equation}\label{zeta1}
\ze\colon |\OM^{\infty} E_*|_{\sT} \rtarr \OM^{\infty} |E_*|_{\sS}
\end{equation}
can be identified with the colimit of the iterated canonical maps
\begin{equation}\label{zeta2}
 \ze^n\colon |\OM^n {(E_n)}_*|_{\sT} \rtarr \OM^{n} |(E_n)_*|_{\sT},
\end{equation}
and Step \ref{step3} for $\OM^{\infty}$ 
follows by passage to colimits from Step \ref{step3} for the $\OM^n$, applied
to simplicial $(n-1)$-connected spaces. 
Here, for simplicity of comparison with \cite{Geo}, we have indexed spectra 
sequentially, that is on the cofinal sequence $\bR^n$ in $U$.  The $n^{th}$ 
spaces $(E_n)_*$ of the simplicial spectrum $E_*$ give a simplicial space and 
the $\OM^n(E_n)_*$ are compatibly isomorphic to $(E_0)_*$. Thus, on the left side, 
the colimit is $|\OM^{\infty} E_*|_{\sT}$.
When $E_* = LT_*$ for a simplicial inclusion prespectrum $T_*$,
the right side can be computed as $L|T_*|_{\sP}$, where the prespectrum 
level realization is defined levelwise.  One checks that $|T_*|_{\sP}$ is 
again an inclusion prespectrum, and the identification of the colimit 
on the right with $\OM^{\infty}|E_*|_{\sS}$ follows. 
 
Granting these details, we have the following additive infinite loop space
machine.  Recall that a spectrum is connective if its negative homotopy groups are zero and that a map $f$ of connective spectra is a weak equivalence if and only if $\OM^{\infty}f$ is a weak equivalence.

\begin{thm}\label{add}  For a $\sC$-space $X$, define
$\bE X = B(\SI^{\infty},C,X)$. Then $\bE X$ is connective and there is a natural diagram of maps of $C$-spaces
\[  \xymatrix@1{ 
X & B(C,C,X) \ar[l]_-{\epz} \ar[rr]^{B(\al,\id,\id)} 
& & B(Q,C,X) \ar[r]^-{\ze} & \OM^{\infty} \bE X\\} \]
in which $\epz$ is a homotopy equivalence with natural homotopy inverse $\et$,
$\ze$ is a weak equivalence, and $B(\al,\id,\id)$ is a group completion. 
Therefore the composite $\et\colon X\rtarr \OM^{\infty} \bE X$ is a group
completion and thus a weak equivalence if $X$ is grouplike.  For a 
spectrum $Y$, there is a composite natural map of spectra
\[ \xymatrix@1{
\epz\colon \bE\OM^{\infty} Y \ar[rr]^-{B(\id,\al,\id)}
& & B(\SI^{\infty},Q,\OM^{\infty} Y)\ar[r]^-{\epz} & Y,\\}\]
and the induced maps of $\sC$-spaces
\[  \xymatrix@1{ 
\OM^{\infty}\epz\colon 
\OM^{\infty} \bE \OM^{\infty} Y  \ar[rr]^-{\OM^{\infty} B(\id,\al,\id)}
& & \OM^{\infty} B(\SI^{\infty},Q,\OM^{\infty} Y) \ar[r]^-{\OM^{\infty}\epz} & \OM^{\infty} Y\\}\]
are weak equivalences. Therefore $\bE$ and $\OM^{\infty}$
induce an adjoint equivalence between the homotopy category of grouplike
$E_{\infty}$ spaces and the homotopy category of connective spectra.
\end{thm}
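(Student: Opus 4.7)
The strategy is to specialize the abstract recognition scheme of Section \ref{phil} to $\sV=\sT$, $\sW=\sS$, $(\XI,\LA)=(\SI^\infty,\OM^\infty)$, with $C$ the Steiner monad on $\sT$ (for $U=\bR^\infty$) and $\al\colon C\rtarr Q$ the monad map of (\ref{alpha}). With these choices, the bar-construction zigzag of Section \ref{phil} is precisely the diagram in the statement: $\bE X = B(\SI^\infty,C,X)$, $\epz\colon B(C,C,X)\rtarr X$ realizes the canonical split simplicial augmentation and so is a homotopy equivalence in $\sT$ with inverse $\et$ (the latter not itself a $C$-map), while $B(\al,\id,\id)$ is a map of $C$-algebras because $\al$ is a map of monads. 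Connectivity of $\bE X$ follows from standard properties of realization of simplicial spectra, since the levels $\SI^\infty C^q X$ of $B_*(\SI^\infty,C,X)$ are connective.

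The three properties in the forward direction then come from the three Steps of Section \ref{phil}. At simplicial level $q$, the arrow $B(\al,\id,\id)$ is $\al\colon CC^q X\rtarr QC^q X$, which is a group completion by Step \ref{step1}. The group-completion form of Step \ref{step2} flagged in the excerpt then passes this to a group completion $B(\al,\id,\id)\colon B(C,C,X)\rtarr B(Q,C,X)$, with Reedy cofibrancy supplied by the standing nondegenerate-basepoint hypothesis. That $\ze\colon B(Q,C,X)\rtarr\OM^\infty\bE X$ is a weak equivalence is Step \ref{step3} for the simplicial spectrum $B_*(\SI^\infty,C,X)$, whose levels are connective suspension spectra; via (\ref{zeta1})--(\ref{zeta2}) this reduces to the space-level statement $|\OM^n X_*|\rtarr \OM^n|X_*|$ for simplicial $(n-1)$-connected $X_*$ from \cite{Geo}, together with its compatibility with the Steiner-operad action. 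Composing the three arrows shows that $\et\colon X\rtarr\OM^\infty\bE X$ is a group completion, hence a weak equivalence whenever $X$ is grouplike.

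In the reverse direction, $\al$ makes $\OM^\infty Y$ into a $C$-algebra, and $\epz\colon\bE\OM^\infty Y\rtarr Y$ is defined as the composite $B(\SI^\infty,C,\OM^\infty Y)\rtarr B(\SI^\infty,Q,\OM^\infty Y)\rtarr Y$ exactly as in the closing paragraphs of Section \ref{phil}, the second arrow realizing the split augmentation $B_*(\SI^\infty,Q,\OM^\infty Y)\rtarr Y$ given by iterating the counit of the $(\SI^\infty,\OM^\infty)$ adjunction. Applying $\OM^\infty$ produces the four-arrow square displayed there, whose two $\ze$'s are weak equivalences by Step \ref{step3} (again all simplicial spectra in sight are levelwise suspension spectra), whose bottom-right $\epz$ is a homotopy equivalence because the underlying simplicial object $B_*(\OM^\infty\SI^\infty,\OM^\infty\SI^\infty,\OM^\infty Y)$ is split, and whose bottom $B(\id,\al,\id)$ is levelwise $\al$ applied to constructions built from the already-grouplike $\OM^\infty Y$, hence a levelwise weak equivalence by Step \ref{step1} and a weak equivalence on realization by Step \ref{step2}. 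Consequently $\OM^\infty\epz$ is a weak equivalence; since $\bE\OM^\infty Y$ is connective, $\epz$ itself is a weak equivalence when $Y$ is connective, and the claimed adjoint equivalence of homotopy categories is the formal restriction to grouplike $E_\infty$ spaces on the source and connective spectra on the target.

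The main obstacle is Step \ref{step3}, the commutation $\ze\colon|\OM^\infty E_*|_\sT\rtarr\OM^\infty|E_*|_\sS$. Via (\ref{zeta1})--(\ref{zeta2}) this reduces to the finite-stage comparison $|\OM^n X_*|\rtarr\OM^n|X_*|$ for Reedy-cofibrant simplicial $(n-1)$-connected spaces together with its compatibility with the Steiner-operad action on both sides---the former is the delicate connectivity argument of \cite{Geo}, while the latter requires transferring the pairing argument from the little-cubes case to the present Steiner setting.
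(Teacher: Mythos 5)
Your overall route is the paper's own: you specialize the scheme of \S\ref{phil} to $(\SI^{\infty},\OM^{\infty})$, the Steiner monad $C$, and the monad map $\al$ of (\ref{alpha}), and you invoke Steps \ref{step1}--\ref{step3} exactly where \S\ref{machines} does; the forward half (connectivity, the group completion $\et$, the $\ze$ analysis) is argued as in the paper. The genuine gap is in the reverse half, where you justify that the bottom map $B(\id,\al,\id)\colon B(Q,C,\OM^{\infty}Y)\rtarr B(Q,Q,\OM^{\infty}Y)$ is ``levelwise a weak equivalence by Step \ref{step1}'' because $\al$ is applied to constructions built from the grouplike space $\OM^{\infty}Y$. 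Step \ref{step1} only makes $\al_W\colon CW\rtarr QW$ a group completion; it is a weak equivalence precisely when its source $CW$ is grouplike, which forces $W$ to be connected, since $\pi_0(CW)$ is the free commutative monoid on the non-basepoint components of $W$. Grouplikeness of $\OM^{\infty}Y$ is not inherited by the free constructions to which $\al$ is applied at simplicial level $q$, so the level maps $QC^{q}\OM^{\infty}Y\rtarr QQ^{q}\OM^{\infty}Y$ are not weak equivalences: already for $Y=S$ one gets on $\pi_0$ the free abelian group on a free commutative monoid mapping to the free abelian group on its group completion, which is not surjective. Note also that $Q$ applied to a group completion is in general neither a weak equivalence nor a group completion, so Step \ref{step2} cannot be invoked here. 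This is exactly the delicate non-connected point of the theorem.

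The claim itself is true and can be recovered from what you have already established, but by a global rather than a levelwise argument. By the forward half, $B(\al,\id,\id)\colon B(C,C,Z)\rtarr B(Q,C,Z)$ is a group completion for every $\sC$-space $Z$, hence a weak equivalence for the grouplike $Z=\OM^{\infty}Y$. Since the $C$-action on $Z$ and the right $C$-action on $Q$ are pullbacks along $\al$ of the $Q$-structures, the composite $B(\id,\al,\id)\com B(\al,\id,\id)=B(\al,\al,\id)$ is compatible with augmentations, that is $\epz\com B(\al,\al,\id)=\epz\colon B(C,C,Z)\rtarr Z$. Both this augmentation and the split augmentation $\epz\colon B(Q,Q,Z)\rtarr Z$ are homotopy equivalences, so two-out-of-three gives that $B(\id,\al,\id)$ is a weak equivalence when $Z=\OM^{\infty}Y$; equivalently, this argument shows directly that $\OM^{\infty}\epz\com\et\htp\id$ on $\OM^{\infty}Y$, and $\et$ is a weak equivalence there because $\OM^{\infty}Y$ is grouplike. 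With that substitution, the rest of your proof, including the deduction that the two top maps of the square and hence $\epz\colon\bE\OM^{\infty}Y\rtarr Y$ (for connective $Y$) are weak equivalences, goes through as written.
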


The previous theorem refers only to the Steiner operad $\sC$, for canonicity,
but we can apply it equally well to any other $E_{\infty}$ operad $\sO$. We
can form the product operad $\sP = \sC\times \sO$, and the $j^{th}$ levels of
its projections $\pi_1\colon \sP\rtarr \sC$ and $\pi_2\colon \sP\rtarr \sO$
are $\SI_j$-equivariant homotopy equivalences. While the monad $P$ associated 
to $\sP$ is not a product, the induced projections of monads $P\rtarr C$ and 
$P\rtarr O$ are natural weak equivalences.  This allows us to replace $C$ by 
$P$ in the previous theorem. If $X$ is an $O$-space, then it is a $P$-space by 
pullback along $\pi_2$, and $\SI^{\infty}$ is a right $P$-functor by pullback along $\pi_1$.  

There is another way to think about this trick that I now find preferable.  
Instead of repeating Theorem \ref{add} with $C$ replaced by $P$, one can first 
change input data and then apply Theorem \ref{add} as it stands.  Here we again 
use the two-sided bar construction.  For $\sO$-spaces $X$ regarded by pullback
as $\sP$-spaces, we have a pair of natural weak equivalences of $\sP$-spaces
\begin{equation}
\xymatrix@1{ 
X & B(P,P,X) \ar[l]_-{\epz} \ar[rr]^-{B(\pi_1,\id,\id)} & &  B(C,P,X),\\}
\end{equation}
where $B(C,P,X)$ is a $C$-space regarded as a $\sP$-space by pullback 
along $\pi_1$.  The same maps show that if $X$ is a $\sP$-space, then 
it is weakly equivalent as a $\sP$-space to $B(C,P,X)$.  Thus the 
categories of $\sO$-spaces and $\sP$-spaces can be used interchangeably.
Reversing the roles of $\sC$ and $\sO$, the categories of $\sC$-spaces
and $\sP$-spaces can also be used interchangeably.  We conclude that $\sO$-spaces
for any $E_{\infty}$ operad $\sO$ can be used as input to the additive
infinite loop space machine. We have the following conclusion.

\begin{cor}\label{homocat} For any $E_{\infty}$ operad $\sO$, the additive infinite loop 
space machine $\bE$ and the $0^{th}$ space functor $\OM^{\infty}$ induce an adjoint equivalence 
between the homotopy category of grouplike $\sO$-spaces and the homotopy category of 
connective spectra.
\end{cor}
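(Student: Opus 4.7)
The plan is to reduce the statement for a general $E_{\infty}$ operad $\sO$ to the case of the Steiner operad $\sC$ already handled in Theorem \ref{add}, using the change-of-input device flagged in the preceding discussion via the product operad $\sP = \sC\times\sO$. The projections $\pi_1\colon\sP\to\sC$ and $\pi_2\colon\sP\to\sO$ are levelwise $\SI_j$-equivariant homotopy equivalences, so the induced monad maps $P\to C$ and $P\to O$ are levelwise weak equivalences; this is the only homotopical input beyond Theorem \ref{add} that is needed.

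First I would define the machine on an $\sO$-space $X$ by pulling back along $\pi_2$ to regard $X$ as a $\sP$-space, and then setting
\[
\bE X \;=\; \bE\,B(C,P,X),
\]
where $C$ is a left $P$-functor by pullback along $\pi_1$ and the outer $\bE$ is the Steiner machine of Theorem \ref{add} applied to the $\sC$-space $B(C,P,X)$. For a connective spectrum $Y$ the $\sC$-space $\OM^{\infty}Y$ becomes a $\sP$-space by pullback along $\pi_1$, and $B(O,P,\OM^{\infty}Y)$ is then an $\sO$-space; together these supply the candidate adjoint pair on homotopy categories.

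For the equivalence, I would assemble the zigzag of natural maps of $\sP$-spaces
\[
\xymatrix@1{
X & B(P,P,X) \ar[l]_-{\epz} \ar[rr]^-{B(\pi_1,\id,\id)} & & B(C,P,X) \ar[rr] & & \OM^{\infty}\bE X\\}
\]
and argue as follows: $\epz$ is a homotopy equivalence; $B(\pi_1,\id,\id)$ is a weak equivalence by Step \ref{step2} applied to the levelwise equivalence $P\to C$; and, when $X$ is grouplike (hence so is $B(C,P,X)$, since grouplikeness is invariant under weak equivalences of $H$-monoids), the last map is a weak equivalence by Theorem \ref{add}. Running the symmetric zigzag with the roles of $\sC$ and $\sO$ reversed and applying $\bE$ supplies the counit $\bE B(O,P,\OM^{\infty}Y)\simeq Y$ from Theorem \ref{add} on the spectrum side.

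The argument is essentially formal once Theorem \ref{add} is in hand and $\pi_1,\pi_2$ are known to be levelwise equivalences. The only points requiring any care are invoking Step \ref{step2} to see that the levelwise equivalence of monads $P\to C$ induces a weak equivalence $B(\pi_1,\id,\id)$ after realization, and confirming that grouplikeness transfers across the zigzag so that Theorem \ref{add} delivers a weak equivalence at the end rather than merely a group completion. I foresee no genuine obstacle beyond these bookkeeping checks; the whole content of extending from $\sC$ to an arbitrary $E_{\infty}$ operad $\sO$ lies in the interchangeability of any two $E_{\infty}$ operads through their product.
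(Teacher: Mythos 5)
Your proposal is correct and follows essentially the same route as the paper: form the product operad $\sP = \sC\times\sO$, use that the projections of monads $P\rtarr C$ and $P\rtarr O$ are weak equivalences, transport input data through the zigzag $X \leftarrow B(P,P,X)\rtarr B(C,P,X)$, and then invoke Theorem \ref{add} (this is exactly the paper's "change input data first" formulation, which it states as the preferable version of the product-operad trick). Your explicit use of $B(O,P,\OM^{\infty}Y)$ on the spectrum side and the remark that grouplikeness is preserved along the zigzag are just the bookkeeping the paper leaves implicit.
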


In particular, many of the interesting examples are $\sL$-spaces.
We can apply the additive infinite loop space machine to them, 
ignoring the special role of $\sL$ in the multiplicative theory. 
As we recall in the second sequel \cite{Sequel2}, examples include 
various stable classifying spaces and homogeneous spaces of geometric 
interest.  By Corollary \ref{GLR}, they also include the unit spaces $GL_1R$ and $SL_1R$
of an $E_{\infty}$ ring spectrum $R$.  The importance of these spaces in geometric 
topology is explained in \cite{Sequel2}.  The following definitions and results 
highlight their importance in stable homotopy theory and play a significant role 
in \cite{five}.\footnote{That paper reads in part like a sequel to this one.  
However, aside from a very brief remark that merely acknowledges their existence,
$E_{\infty}$ ring spaces are deliberately avoided there.}  We start with a 
reinterpretation of the adjunction of Corollary \ref{themguys} 
for grouplike $\sL$-spaces $Y$.

\begin{lem}\label{gplikeR} If $Y$ is a grouplike $\sL$-space, then
\[ \sL[\sS](\SI^{\infty}_+Y,R)\iso \sL[\sU](Y,GL_1R) \]
\end{lem}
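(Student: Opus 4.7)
The strategy is to read off the isomorphism from the adjunction already established in Corollary \ref{themguys}, namely
\[ \sL[\sS](\SI^{\infty}_+Y,R) \iso \sL[\sU](Y,\OM^{\infty}R), \]
and then show that when $Y$ is grouplike every $\sL$-map $f\colon Y \rtarr \OM^{\infty}R$ factors uniquely through the inclusion $GL_1 R \hookrightarrow \OM^{\infty}R$. Since the adjunction is natural in both variables, this factorization will yield the claimed bijection.

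First I would recall that $GL_1 R$ is the union of those path components of $R_0 = \OM^{\infty}R$ which correspond to units in the ring $\pi_0(R)$, and that by Corollary \ref{GLR} it inherits an $\sL$-action from the multiplicative $\sL$-action on $\OM^{\infty}R$ simply by restriction. In particular, $GL_1R \hookrightarrow \OM^{\infty}R$ is the inclusion of an open-and-closed sub-$\sL$-space, so any continuous $\sL$-map $Y \rtarr \OM^{\infty}R$ whose image is contained in $GL_1R$ factors uniquely through $GL_1R$ as an $\sL$-map. Thus it is enough to verify the set-theoretic containment $f(Y) \subseteq GL_1R$.

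The key step is the following. Given an $\sL$-map $f\colon Y \rtarr \OM^{\infty}R$, applying $\pi_0$ gives a homomorphism of commutative monoids $\pi_0(f)\colon \pi_0(Y) \rtarr \pi_0(R)$, where the monoid structures are those induced by the $\sL$-actions (multiplicative on the right). Because $Y$ is grouplike by hypothesis, $\pi_0(Y)$ is an abelian group, and any monoid homomorphism from a group to $\pi_0(R)$ must land in the group of units $(\pi_0 R)^{\times} = \pi_0(GL_1 R)$. Hence for every $y \in Y$ the component $[f(y)] \in \pi_0(R)$ is a unit, i.e.\ $f(y) \in GL_1 R$, giving the required factorization. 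I do not anticipate any genuine obstacle here; the only thing to be careful about is that the $\sL$-structure on $Y$ is being used both to make $\pi_0(Y)$ into a monoid and to guarantee compatibility of $\pi_0(f)$ with that monoid structure, which is immediate from naturality of $\pi_0$ applied to the operad action diagrams.
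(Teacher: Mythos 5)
Your proposal is correct and is essentially the paper's own argument: the isomorphism is read off from the adjunction of Corollary \ref{themguys}, together with the observation that an $\sL$-map out of a grouplike $Y$ must land in $GL_1R$ because the group $\pi_0Y$ can only map to the units of the rig $\pi_0(\OM^{\infty}R)$. Your extra remarks about $GL_1R$ being a union of components (so the factorization is automatically an $\sL$-map) just make explicit what the paper leaves implicit.
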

\begin{proof}
A map of $\sL$-spaces $Y\rtarr \OM^{\infty}R$ must take values in $GL_1R$ since
the group $\pi_0Y$ must map to the group of units of the ring $\pi_0 \OM^{\infty} R$.
\end{proof}

The notations of the following definition have recently become standard, although
the definition itself dates back to \cite{MQR}. 

\begin{defn}\label{gl} Let $R$ be an $\sL$-spectrum.  Using the operad $\sL$ 
in the additive infinite loop space machine, define $gl_1R$ and $sl_1R$ to be
the spectra obtained from the $\sL$-spaces $GL_1R$ and $SL_1R$, so that
$\OM^{\infty}gl_1R \simeq GL_1R$ and $\OM^{\infty}sl_1R \simeq SL_1R$.
\end{defn}

\begin{cor} On homotopy categories, the functor $gl_1$ from 
$E_{\infty}$ ring spectra to spectra is right adjoint to the functor 
$\SI^{\infty}_+\OM^{\infty}$ from spectra to $E_{\infty}$ ring spectra.
\end{cor}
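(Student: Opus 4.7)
The plan is to deduce the corollary by composing three natural bijections on homotopy categories. Fix a spectrum $E$ and an $\sL$-spectrum $R$. The first step is to treat $\OM^{\infty}E$ as a grouplike $\sL$-space: it is automatically grouplike, being an infinite loop space, and although its canonical $E_{\infty}$ structure is that of the Steiner operad $\sC$, Corollary \ref{homocat} (via the product operad $\sC\times \sL$ and the two-sided bar construction) lets us replace it, up to natural weak equivalence, by an $\sL$-space. Lemma \ref{gplikeR} then supplies
\[ [\SI^{\infty}_+ \OM^{\infty}E,\ R]_{\sL[\sS]}\ \iso\ [\OM^{\infty}E,\ GL_1R]_{\sL[\sU]}. \]

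The second step applies the adjoint equivalence of Corollary \ref{homocat} with $\sO=\sL$. Since $gl_1R$ is connective by Definition \ref{gl}, with $\OM^{\infty}gl_1R\htp GL_1R$, and since $\OM^{\infty}E$ is grouplike, it yields
\[ [\OM^{\infty}E,\ GL_1R]_{\sL[\sU]}\ \iso\ [\bE\OM^{\infty}E,\ gl_1R]_{\sS}. \]

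The third step identifies $\bE\OM^{\infty}E$ with the connective cover of $E$. Because $\OM^{\infty}E$ is already grouplike, the group completion $\et\colon \OM^{\infty}E\rtarr \OM^{\infty}\bE\OM^{\infty}E$ of Theorem \ref{add} is a weak equivalence, so $\bE\OM^{\infty}E$ is a connective spectrum with $\pi_n\bE\OM^{\infty}E\iso \pi_nE$ for $n\geq 0$. Since $gl_1R$ is connective, the standard fact that a co-connective spectrum admits no nonzero map to a connective spectrum (an obstruction-theoretic argument via the Postnikov tower) implies that the connective cover map induces
\[ [E,\ gl_1R]_{\sS}\ \iso\ [\bE\OM^{\infty}E,\ gl_1R]_{\sS}. \]
Composing the three bijections gives the asserted natural adjunction on homotopy categories. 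The main obstacle is the third step --- identifying $\bE\OM^{\infty}E$ with the connective cover of $E$ and ruling out contributions from the negative part of $E$; the first two steps are formal consequences of the machinery already assembled.
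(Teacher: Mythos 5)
Your proposal is correct and takes essentially the same route as the paper: replace the $\sC$-space $\OM^{\infty}E$ by a weakly equivalent $\sL$-space, compose the adjunction of Lemma \ref{gplikeR} with the adjoint equivalence of Corollary \ref{homocat}, and dispose of the negative-degree homotopy of $E$ via the connective cover. The only cosmetic difference is that you identify $\bE\OM^{\infty}E$ directly as a model for the connective cover $cE$, whereas the paper invokes $[E,gl_1R]\iso [cE,gl_1R]$ together with $\OM^{\infty}cE\htp \OM^{\infty}E$; these amount to the same argument.
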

\begin{proof}  Here we implicitly replace the $C$-space $\OM^{\infty}R$ by a weakly
equivalent $\sL$-space, as above. Using this replacement, we can view the functor 
$\SI^{\infty}_{+}\OM^{\infty}$ as taking values in $\sL$-spectra.   
Now the conclusion is obtained by composing the equivalence of Corollary 
\ref{homocat} with the adjunction of
Lemma \ref{gplikeR} and using that, in the homotopy category, maps from a spectrum $E$
to a connective spectrum $F$, such as $gl_1R$, are the same as maps from the 
connective cover $cE$ of $E$ into $F$, while $\OM^{\infty} cE\htp  \OM^{\infty} E$. 
\end{proof}

Note that we replaced $\sL$ by $\sC$ to define $GL_1R$ as a $\sC$-space valued functor
before applying $\bE$,
and we replaced $\sC$ by $\sL$ to define $\OM^{\infty}$ as an $\sL$-space valued functor.
Another important example of an $E_{\infty}$ operad should also be mentioned.

\begin{rem}\label{BEop} There is a categorical operad, denoted $\sD$, 
that is obtained by applying the classifying space functor to the translation 
categories of the groups $\SI_j$. This operad acts on the classifying spaces 
of permutative categories, as we recall from \cite[\S4]{MayPer} in 
\cite{Sequel}.  Another construction of the same $E_{\infty}$ operad is 
obtained by applying a certain product-preserving functor from spaces to 
contractible spaces to the operad $\sM$ that defines monoids; 
see \cite[p. 161]{Geo} and \cite[4.8]{MayPer}.  The second 
construction shows that $\sM$ is a suboperad, so that a $\sD$-space 
has a canonical product that makes it a topological monoid.  
The simplicial version of $\sD$ is called the Barratt-Eccles operad 
in view of their use of it in \cite{BE1, BE2, BE3}.
\end{rem}

\begin{rem}\label{OME}  In the applications, one often uses the 
consistency statement that, for an $E_{\infty}$ space $X$, there is a natural 
map of spectra $\SI \bE\OM X\rtarr \bE X$ that is an equivalence if $X$ is connected.  
This is proven in \cite[\S14]{Geo}, with improvements in \cite[\S3]{MayPer} and
\cite[VI.3.4]{MQR}.  The result is considerably less obvious than it seems, and the cited 
proofs are rather impenetrable, even to me.   I found a considerably simpler conceptual 
proof while writing this paper.  Since this is irrelevant to our multiplicative 
story, I'll avoid interrupting the flow here by deferring the new proof to Appendix B (\S15).
\end{rem}

Now we add in the multiplicative structure, and we find that it is
startlingly easy to do so.
Let us say that a rig space $X$ is ringlike if it is grouplike under its
additive $H$-monoid structure.  A map of rig spaces $f\colon X\rtarr Y$ is 
a ring completion if $Y$ is ringlike and $f$ is a group completion of the
additive structure.  Replacing $\sT$ and $\sS$ by $L_+[\sT]$ and 
$L_+[\sS]$ and using that $\al\colon C\rtarr Q$ is a map of monads
on $L_+[\sT]$, the formal structure of the previous section still
applies verbatim, and the homotopy properties depend only on the additive structure.  The only point that needs mentioning is that, for the monadic part of Step \ref{step3}, we now identify the map $\zeta$ of (\ref{zeta1}) with the colimit over $V$ of the maps 
\begin{equation}\label{zeta3}
 \ze^V\colon |\OM^V {(EV)}_*|_{\sT} \rtarr \OM^{V} |(EV)_*|_{\sT}
\end{equation}
and see that $\ze$ is a map of $\sL_+$-spaces because of the naturality
of the colimit system (\ref{zeta3}) with respect to linear isometries.
Therefore the additive infinite loop space machine specializes 
to a multiplicative infinite loop space machine.   

\begin{thm}\label{mult}  For a $(\sC,\sL)$-space $X$, define
$\bE X = B(\SI^{\infty},C,X)$. Then $\bE X$ is a connective $\sL$-spectrum
and all maps in the diagram 
\[  \xymatrix@1{ 
X & B(C,C,X) \ar[l]_-{\epz} \ar[rr]^{B(\al,\id,\id)} 
& & B(Q,C,X) \ar[r]^-{\ze} & \OM^{\infty} \bE X\\} \]
of the additive infinite loop space machine are maps of
$(\sC,\sL)$-spaces. Therefore the composite 
$\et\colon X\rtarr \OM^{\infty} \bE X$ is a 
ring completion.  For an $\sL$-spectrum $R$, the maps
\[ \xymatrix@1{
\epz\colon \bE\OM^{\infty} R \ar[rr]^-{B(\id,\al,\id)}
& & B(\SI^{\infty},Q,\OM^{\infty} R)\ar[r]^-{\epz} & R\\}\]
are maps of $\sL$-spectra and the maps
\[  \xymatrix@1{ 
\OM^{\infty}\epz\colon \OM^{\infty} \bE \OM^{\infty} R  \ar[rr]^-{\OM^{\infty} B(\id,\al,\id)}
& & \OM^{\infty} B(\SI^{\infty},Q,\OM^{\infty} R) \ar[r]^-{\OM^{\infty}\epz} & \OM^{\infty} R\\}\]
are maps of $(\sC,\sL)$-spaces.  Therefore $\bE$ and $\OM^{\infty}$
induce an adjoint equivalence between the homotopy category of ringlike
$E_{\infty}$ ring spaces and the homotopy category of connective 
$E_{\infty}$ ring spectra.
\end{thm}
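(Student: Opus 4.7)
My approach is to lift the additive recognition principle of Theorem~\ref{add} to the multiplicative setting by observing that every construction in Section~\ref{phil} restricts to the $\sL_+$-enriched categories $L_+[\sT]$ and $L_+[\sS]$. The essential inputs are already in place: Proposition~\ref{CmonSpace} says $C$ restricts to a monad on $L_+[\sT]$, Proposition~\ref{formalQ} says $Q$ restricts to a monad on $L_+[\sT]$ and $\OM^\infty$ carries $\sL$-spectra to $Q$-algebras in $L_+[\sT]$, and Proposition~\ref{CmonSpace2} says $\al\colon C\rtarr Q$ is a map of monads on $L_+[\sT]$. Since a $(\sC,\sL)$-space is exactly a $C$-algebra in $L_+[\sT]$, the entire formal machinery of the bar construction applies verbatim in the enriched setting.

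First I would check that $\bE X = B(\SI^\infty,C,X)$ is a connective $\sL$-spectrum. Since $\SI^\infty$ carries $\sL_+$-spaces to $\sL$-spectra via the monadic isomorphism $L_+\SI^\infty\iso\SI^\infty L_+$ of~(\ref{newmon}), each simplicial degree $B_q(\SI^\infty,C,X)=\SI^\infty C^q X$ lies naturally in $L_+[\sS]$ with all face and degeneracy maps preserving $\sL$-spectrum structure. Realization inherits the structure because the left adjoint $L_+$ commutes with realization. Connectivity is immediate from Theorem~\ref{add}. Next I would verify that the three maps $\epz$, $B(\al,\id,\id)$, $\ze$ on the space side, and the corresponding maps on the spectrum side, preserve the structures claimed. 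For $\epz$ and $B(\al,\id,\id)$ this is a levelwise check combined with functoriality of realization, using that $\al$ is a monad map in $L_+[\sT]$. The one subtle point, and the main obstacle, is showing that $\ze\colon|\OM^\infty E_*|_{\sT}\rtarr \OM^\infty|E_*|_{\sS}$ is a map of $\sL_+$-spaces. For this I would adopt the identification, indicated just before the statement, of $\ze$ as the colimit over indexing spaces $V\subset U$ of the maps $\ze^V$ of~(\ref{zeta3}); each $\ze^V$ is a map of $\sK_V$-spaces by the argument of \cite[12.4]{Geo}, and the $\sL_+$-structure on the colimit arises precisely from the functoriality of the system $\{\ze^V\}$ in linear isometries $V\rtarr W$, using the definition of $\sL$ as $\sI_c(U^j,U)$ via passage to colimits from finite-dimensional pieces.

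Granting these structural verifications, the remaining claims follow by combining them with Theorem~\ref{add}. The group completion statement for the composite $\et\colon X\rtarr\OM^\infty\bE X$ depends only on the additive structure and is immediate from Theorem~\ref{add}; now we know in addition that $\et$ is a $(\sC,\sL)$-map, so when $X$ is ringlike it is a weak equivalence of $(\sC,\sL)$-spaces. Symmetrically, $\OM^\infty\epz\colon\OM^\infty\bE\OM^\infty R\rtarr\OM^\infty R$ is a weak equivalence of $(\sC,\sL)$-spaces, and since a map of connective $\sL$-spectra is a weak equivalence iff its $\OM^\infty$ is, $\epz\colon \bE\OM^\infty R\rtarr R$ is a weak equivalence of $\sL$-spectra. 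Passing to homotopy categories and invoking the underlying adjunction of Corollary~\ref{themguys} yields the asserted adjoint equivalence between ringlike $E_\infty$ ring spaces and connective $E_\infty$ ring spectra.
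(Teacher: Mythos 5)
Your proposal is correct and follows essentially the same route as the paper: replace $\sT$ and $\sS$ by $L_+[\sT]$ and $L_+[\sS]$, use Propositions \ref{CmonSpace}, \ref{formalQ}, and \ref{CmonSpace2} so that the formal machinery of \S\ref{phil} and the homotopical input of Theorem \ref{add} apply verbatim, and isolate exactly the one point the paper singles out, namely that $\ze$ is a map of $\sL_+$-spaces because it is the colimit of the maps $\ze^V$ of (\ref{zeta3}), natural with respect to linear isometries. The only quibble is the closing citation of Corollary \ref{themguys}, which is the $(\SI^{\infty}_+,\OM^{\infty})$ adjunction rather than the source of the $(\bE,\OM^{\infty})$ equivalence; the latter follows, as in the paper, from the natural maps $\et$ and $\epz$ being a ring completion and a weak equivalence on ringlike spaces and connective spectra respectively.
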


Again, we emphasize how simply and naturally these structures fit together.
However, here we face an embarrassment.  We would like to apply this machine
to construct new $E_{\infty}$ ring spectra, and the problem is that the
only operad pairs we have in sight are $(\sC,\sL)$ and $(\sN,\sN)$. We 
could apply the product of operads trick to operad pairs if we only had 
examples to which to apply it.  We return to this point in the sequel
\cite{Sequel}, where we show how to convert such naturally occurring data
as bipermutative categories to $E_{\infty}$ ring spaces, but the theory in 
this paper is independent of that problem.

\section{Localizations of the special unit 
spectrum $sl_1R$}\label{unitspec}

The Barratt-Priddy-Quillen theorem tells us how to construct the sphere
spectrum from symmetric groups.  This result is built into the additive 
infinite loop space machine.  A multiplicative elaboration is also built 
into the infinite loop space machine, as we explain here.  
For a $\sC$-space or $(\sC,\sL)$-space $X$, we abbreviate notation by writing 
$\GA X = \OM^{\infty}\bE X$, using notations like $\GA_1 X$ to indicate
components.  We write $\et\colon X\rtarr \GA X$ for the group completion
map of Theorem \ref{add}.\footnote{The letter $\GA$ is chosen as a reminder of
the group completion property.}  Since it is the composite of $E_{\infty}$ maps or,
multiplicatively, $E_{\infty}$ ring maps and the homotopy inverse of such 
a map, we may think of it
as an $E_{\infty}$ or $E_{\infty}$ ring map. 

\begin{thm}  For a based space $Y$, $\al$ and the left map $\et$ are 
group completions and $\GA \al$ and the right map $\et$ are 
equivalences in the commutative diagram
\[\xymatrix{
CY \ar[r]^{\al} \ar[d]_{\et}  &  QY \ar[d]^{\et} \\
\GA CY \ar[r]_-{\GA \al} & \GA QY. \\}\]
If $Y$ is an $\sL_+$-space, then this is a diagram of $E_{\infty}$ ring spaces.
\end{thm}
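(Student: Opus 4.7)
The plan is to verify commutativity and the three easy claims first, then deduce that $\GA\al$ is a weak equivalence from the other three, and finally promote everything to the $E_{\infty}$ ring setting when $Y$ is an $\sL_+$-space.

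First I would observe that $\al\colon CY\rtarr QY$ is a map of $\sC$-spaces: because $\al\colon C\rtarr Q$ is a map of monads on $\sT$, the free $C$-algebra $CY$ is sent to the $C$-algebra obtained from the $Q$-algebra $QY$ by pullback along $\al$, and this map respects the $C$-action. The square then commutes by naturality of the natural transformation $\et\colon \text{Id}\rtarr \GA=\OM^{\infty}\bE$ on the category of $\sC$-spaces. The left $\et$ is a group completion by Theorem~\ref{add} applied to the $\sC$-space $CY$. The top $\al$ is a group completion by the homological calculations of F.~Cohen recalled in \S\ref{machines}.  The right $\et$ is a weak equivalence because $QY=\OM^{\infty}\SI^{\infty}Y$ is grouplike (its $\pi_0$ is a group), so Theorem~\ref{add} gives that $\et$ is an equivalence there.

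Next I would deduce that $\GA\al$ is a weak equivalence by a homological comparison. Applying $H_*(-;k)$ for an arbitrary commutative ring $k$ to the square, the group completion property of $\et_{CY}$ identifies $H_*(\GA CY;k)$ with $H_*(CY;k)[\pi_0(CY)^{-1}]$, and the group completion property of $\al$ identifies $H_*(QY;k)$ with the same localization; meanwhile $(\et_{QY})_*$ is an iso. Hence commutativity forces $(\GA\al)_*$ to be an isomorphism on $H_*(-;k)$ for every $k$, in particular for $k=\bZ$. Similarly, $\pi_0(\GA\al)$ is an isomorphism, since it is the canonical map from the group completion $\pi_0(\GA CY)$ of $\pi_0(CY)$ to the group $\pi_0(\GA QY)=\pi_0(QY)$ that is induced by the monoid group-completion $\pi_0(\al)$. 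Since $\GA CY$ and $\GA QY$ are infinite loop spaces, each component is simple, all components are equivalent, and a map of such spaces that induces isomorphisms on $\pi_0$ and integral homology is a weak equivalence (Whitehead's theorem on the identity component, then translation). The main obstacle here is really just keeping the bookkeeping of group completions and localizations straight; nothing exotic is needed beyond what is already recorded in \S\ref{machines}.

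For the multiplicative enhancement, suppose $Y$ is an $\sL_+$-space. Proposition~\ref{CmonSpace} (with $\sG=\sL$) says that $C$ restricts to a monad on $\sL_+[\sT]$, so $CY$ is canonically a $(\sC,\sL)$-space. The monadic commutation isomorphism $L_+\SI^{\infty}\iso \SI^{\infty} L_+$ of (\ref{newmon}) lifts the $L_+$-structure on $Y$ to an $\sL$-spectrum structure on $\SI^{\infty}Y$, and Proposition~\ref{formalQ} then makes $QY=\OM^{\infty}\SI^{\infty}Y$ an $\sL_+$-space. Proposition~\ref{CmonSpace2} (that $\al\colon C\rtarr Q$ is a map of monads on $\sL_+[\sT]$) shows that $\al\colon CY\rtarr QY$ is a map of $(\sC,\sL)$-spaces. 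Finally, Theorem~\ref{mult} applied to each of the $(\sC,\sL)$-spaces $CY$ and $QY$ shows that the two vertical $\et$ maps are maps of $(\sC,\sL)$-spaces. Thus the entire square lives in the category of $E_{\infty}$ ring spaces, completing the proof.
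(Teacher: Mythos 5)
Your proposal is correct and follows exactly the route the paper intends: the paper states this theorem without proof, treating it as built into the machinery of \S\S 7--9 (naturality of $\et$, the group completion properties of $\al$ and $\et$ from Theorem \ref{add}, grouplikeness of $QY$, and Propositions \ref{formalQ} and \ref{CmonSpace2} plus Theorem \ref{mult} for the multiplicative enhancement). Your explicit deduction that $\GA\al$ is a weak equivalence via uniqueness of homological localizations together with the $\pi_0$ comparison and a Whitehead argument on grouplike $H$-spaces is precisely the standard argument the paper leaves implicit.
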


Replacing $Y$ by $Y_+$, we see by inspection that $C(Y_+)$ is the disjoint 
union over $j\geq 0$ of the spaces $\sC(j)\times_{\SI_j} Y^j$, and of course 
$\sC(j)$ is a model for $E\SI_j$.  When $Y = BG$ for a topological group $G$, 
these  are classifying spaces $B(\SI_j\int G)$.  When $Y = \ast$ and thus $Y_+ = S^0$, 
they are classifying spaces  $B\SI_j$, and we see that the $0^{th}$ space of the sphere spectrum is the
group completion of the $H$-monoid $\amalg_{j\geq 0} B\SI_j$.  This is one version 
of the Barratt-Quillen theorem.  

For an $E_{\infty}$ space $X$ with a map $S^0\rtarr X$, there is a natural map of 
monoids from the additive monoid $\bZ_{\geq 0}$ of nonnegative integers to the monoid 
$\pi_0(X)$. It is obtained by passage to $\pi_0$ from the composite $CS^0\rtarr CX\rtarr X$.  
We assume that it is a monomorphism, as holds in the interesting cases. 
Write $X_m$ for the $m^{th}$ component.  Translation by an element in $X_n$ 
(using the $H$-space structure induced by the operad action) induces a map 
$n\colon X_m\rtarr X_{m+n}$.  We have the homotopy commutative ladder
\[ \xymatrix{
X_0\ar[r]^-{1}\ar[d]_{\et}  & X_1 \ar[r] \ar[d]^{\et}  & \cdots  \ar[r] 
& X_{n-1} \ar[d]^{\et} \ar[r]^-{1} & X_n \ar[d]^{\et} \ar[r] & \cdots \\
\GA_0X\ar[r]_-{1} & \GA_1X \ar[r]  & \cdots  \ar[r] & \GA_{n-1}X 
\ar[r]_-{1} & \GA_{n}X \ar[r] & \cdots \\} \]
Write $\bar{X}$ for the telescope of the top row.  The maps on the 
bottom row are homotopy equivalences, so the ladder induces a map 
$\bar{\et}\colon \bar{X}\rtarr \GA_0 X$.  Since $\et$ is a group completion, 
$\bar{\et}$ induces an isomorphism on homology.  Taking $X=CS^0$, it follows
that $\bar{\et}\colon B\SI_{\infty} \rtarr Q_0S^0$ is a homology isomorphism
and therefore that $Q_0S^0$ is the plus construction on $B\SI_{\infty}$. 
This is another version of the Barratt-Quillen theorem. 

We describe a multiplicative analogue of this argument and result, due to Tornehave
in the case of $QS^0$ and generalized in \cite[VII.5.3]{MQR}, where full
details may be found.  Recall that we write $gl_1R$ and $sl_1R$ for the spectra 
$\bE GL_1R$ and $\bE SL_1R$ that the black box associates to the $E_{\infty}$ spaces 
$SL_1R\subset GL_1R$, where $R$ is an $E_{\infty}$ ring spectrum.  
The map $sl_1R\rtarr gl_1R$ is a connected cover.  It is usually not an easy matter to identify 
$sl_1R$ explicitly. The cited result gives a general step in this direction. The point to 
emphasize is that the result intrinsically concerns
the relationship between the additive and multiplicative $E_{\infty}$ space
structures on an $E_{\infty}$ ring space.  Even if one's focus is solely on
understanding the spectrum $sl_1R$ associated to the $E_{\infty}$ ring spectra $R$, 
one cannot see a result like this without introducing $E_{\infty}$ ring spaces. 

As in \cite[VII\S5]{MQR}, we start with an $E_{\infty}$ ring 
space $X$ and we assume that the canonical map of rigs from the rig
$\bZ_{\geq 0}$ of nonnegative integers to $\pi_0(X)$ is a monomorphism.  
When $X = \OM^{\infty} R$, 
$\bE X$ is equivalent as an $E_{\infty}$ ring spectrum to the connective 
cover of $R$ and $GL_1\bE(X)$ is equivalent as an $E_{\infty}$ space to $GL_1R$. The general case is especially interesting when $X$ is the classifying space 
$B\sA$ of a bipermutative category $\sA$ (as defined in \cite[VI\S3]{MQR}; 
see the sequel \cite{Sequel}). 

Let $M$ be a multiplicative submonoid of 
$\bZ_{\geq 0}$ that does not contain zero.  For example, $M$ might be $\{p^i\}$ 
for a prime $p$, or it might be the set of positive integers prime to $p$.
Let $\bZ_M = \bZ[M^{-1}]$ denote the localization of $\bZ$ at $M$; thus
$\bZ_M =\bZ[p^{-1}]$ in our first example and $\bZ_M = \bZ_{(p)}$ in the
second.  Let $X_M$ denote the disjoint union of the components $X_m$ with 
$m\in M$. Often, especially when $X=B\sA$, we have a good understanding 
of $X_M$.

Clearly $X_M$ is a sub $\sL$-space of $X$.  Converting it to a $\sC$-space 
and applying the additive infinite loop space machine or, equivalently, 
applying the additive infinite loop space machine constructed starting with
$\sC\times\sL$, we obtain a connective spectrum $\bE(X_M) = \bE(X_M,\xi)$.  
The alternative notation highlights that the spectrum comes 
from the multiplicative operad action on $X$.  This gives us the infinite
loop space $\GA_1(X_M,\xi) = \OM^{\infty}_1\bE(X_M,\xi)$, which depends only
on $X_M$. 

We shall relate this to $SL_1\bE(X)=\OM^{\infty}_1\bE(X,\tha)$. The alternative
notation highlights that 
$\bE(X,\tha)$ is constructed from the additive operad action on $X$ and has 
multiplicative structure inherited from the multiplicative operad action.  

A key example to have in mind is $X=B\sG\sL(R)$, where $\sG\sL(R)$ is the general linear 
bipermutative category of a commutative ring $R$; its objects are the $n\geq 0$ and 
its morphisms are the general linear groups $GL(n,R)$.  In that case $\bE X = KR$ is
the algebraic $K$-theory $E_{\infty}$ ring spectrum of $R$.  The 
construction still makes sense when $R$ is a topological ring.  We can take $R=\bR$ or
$R = \bC$, and we can restrict to orthogonal or unitary matrices without changing the
homotopy type.  Then $K\bR = kO$ and $K\bC=kU$ are the real and complex connective
topological $K$-theory spectra with ``special linear'' spaces $BO_{\otimes}$ 
and $BU_{\otimes}$.  

To establish the desired relation between $SL_1\bE(X)$ and $\GA_1(X_M,\xi)$, we 
need a mild homological hypothesis on $X$, namely that $X$ is convergent at
$M$. It always holds when $X$ is ringlike, when $X = CY$ for an
$\sL_+$-space $Y$, and when $X = B\sA$ for the usual bipermutative
categories $\sA$; see \cite[VII.5.2]{MQR}.  We specify it during 
the sketch proof of the following result.  

\begin{thm}\label{unitsp}  
If $X$ is convergent at $M$, then, as an $E_{\infty}$ space,
the localization of $\OM^{\infty}_1\bE(X,\tha)$ at $M$ is 
equivalent to the basepoint component $\OM^{\infty}_1\bE(X_M,\xi)$. 
\end{thm}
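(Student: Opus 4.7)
The plan is to construct a natural $\sL$-map from $\OM^{\infty}_1\bE(X_M,\xi)$ into the $M$-localization of $SL_1\bE(X,\tha)$, and then verify that it is a weak equivalence by comparing homology via the group completion theorem.

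First, set $R=\bE(X,\tha)$ and form the Bousfield localization $R[M^{-1}]$, an $E_{\infty}$ ring spectrum in which every $m\in M$ is a unit of $\pi_0$. Composing the ring completion $\et\colon X\rtarr \OM^{\infty}R$ of Theorem \ref{mult} with the localization $\OM^{\infty}R\rtarr \OM^{\infty}R[M^{-1}]$ yields a map of $E_{\infty}$ ring spaces $\tilde{\et}\colon X\rtarr \OM^{\infty}R[M^{-1}]$. Since $\tilde{\et}(X_m)$ lies in the invertible component $\OM^{\infty}_m R[M^{-1}]$, multiplicative translation by $m^{-1}$ yields $\psi_m\colon X_m\rtarr SL_1(R[M^{-1}])$, $x\mapsto m^{-1}\cdot\tilde{\et}(x)$. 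A direct check using multiplicativity of $\tilde{\et}$ shows that the $\psi_m$ assemble into a natural $\sL$-map $\psi\colon X_M\rtarr SL_1(R[M^{-1}])$.

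Viewing $X_M$ as a $\sC$-space via the product-of-operads trick (Corollary \ref{homocat}), I apply the additive infinite loop space machine and use the $(\bE,\OM^{\infty})$ adjunction to convert $\psi$ into a map of spectra $\bE(X_M,\xi)\rtarr sl_1(R[M^{-1}])$. Taking $\OM^{\infty}$, restricting to the basepoint component of the source, and identifying $SL_1(R[M^{-1}])\htp (SL_1 R)_M$ (since $M$-localization of the connective spectrum $sl_1 R$ restricts to ordinary $M$-localization of the simply connected space $SL_1 R$) produces the desired comparison
\[ \bar{\psi}\colon \OM^{\infty}_1\bE(X_M,\xi)\rtarr (SL_1 R)_M. \]

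Finally, to show $\bar{\psi}$ is a weak equivalence, I would compare source and target homologically. By the group completion theorem applied to the $\sL$-space $X_M$ and a cofinal sequence $(m_i)$ in $M$,
\[ H_*(\OM^{\infty}_1\bE(X_M,\xi))\iso \colim\bigl(H_*(X_1)\xrightarrow{\cdot m_1} H_*(X_{m_1})\xrightarrow{\cdot m_2} H_*(X_{m_1 m_2})\rtarr\cdots\bigr), \]
with maps given by multiplicative translation. On the other side, additive group completion followed by $M$-localization identifies
\[ H_*((SL_1 R)_M)\iso \colim\bigl(H_*(X_1)\xrightarrow{+1} H_*(X_2)\rtarr\cdots\bigr)\otimes_{\bZ}\bZ_M. \]
Convergence at $M$ is precisely the hypothesis needed to identify these two colimits: in the $M$-localized homology of $X$, multiplication by $m\in M$ becomes cofinal with sufficiently long sequences of additive translations, so the two telescopes compute the same $\bZ_M$-module, and $\bar{\psi}$ realizes this identification. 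The main technical obstacle is this last step: precisely formulating convergence at $M$ so that the multiplicative and additive telescopes can be canonically identified inside $H_*(R[M^{-1}])$, using the distributivity relations of \S\ref{ringspace} that tie together the additive and multiplicative structures on an $E_{\infty}$ ring space.
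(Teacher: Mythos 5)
There is a genuine gap, and it comes in two places. First, the construction of the comparison map is not valid as stated: the "direct check" that the translations $\psi_m(x)=m^{-1}\cdot\tilde{\et}(x)$ assemble into an $\sL$-map $\psi\colon X_M\rtarr SL_1(R[M^{-1}])$ fails. The $\sL$-action on $X_M$ mixes components multiplicatively, so compatibility would require $(m_1\cdots m_j)^{-1}\cdot\xi(g;y_1,\dots,y_j)=\xi(g;m_1^{-1}y_1,\dots,m_j^{-1}y_j)$ strictly, which forces strict associativity/commutativity of the $H$-structure and strict coherence of the chosen inverse points; in an $\sL$-space these hold only up to (uncontrolled) homotopy. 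Since your passage to spectra uses the $(\bE,\OM^{\infty})$ adjunction applied to $\psi$ as a map of operad algebras, the whole spectrum-level comparison map $\bar{\psi}$ is not constructed. The paper's argument is arranged precisely to avoid this: the comparison is a zigzag built only from genuine $\sL$-maps --- the group completions $\et_{\oplus}$, $\et_{\otimes}$, the inclusions $X_1\rtarr X_M$, and $\GA$ applied to these --- and translations appear only inside homology-level telescope ladders, never as claimed $E_{\infty}$ maps.

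Second, the step you label the main technical obstacle is in fact the entire content of the theorem, and it is not carried out. Your formula $H_*((SL_1R)_M)\iso\colim(H_*(X_1)\overset{+1}\rtarr H_*(X_2)\rtarr\cdots)\otimes\bZ_M$ is unjustified when $\pi_0X$ is larger than $\bZ_{\geq 0}$ (the paper only assumes a monomorphism $\bZ_{\geq 0}\rtarr\pi_0X$), and "convergence at $M$" is not a black box that identifies the additive and multiplicative telescopes: in the paper it is defined as the statement that $\et_{\oplus}\colon X_i\rtarr \GA_i(X,\tha)$ is a mod $p$ homology isomorphism in a range growing with $i$, for primes $p$ not dividing any element of $M$. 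The actual proof then has two separate steps you would still need: (i) the composite $X_1\rtarr\GA_1(X_M,\xi)$ is an $M$-localization, proved by an explicit $H$-space localization ladder together with the distributivity-based computation that $\tilde{H}_*(\bar{X}_M;\bF_p)=0$ when $p$ divides an element of $M$ (this is what makes the target $M$-local --- your appeal to Bousfield localization of $R$ handles the target's locality but not the source's); and (ii) convergence implies that $\GA\et_{\oplus}\colon\GA_1(X_M,\xi)\rtarr\GA_1(\GA_M(X,\tha),\xi)$ is a mod $p$ equivalence for the remaining primes, hence an equivalence of $M$-local spaces. Your outline gestures at the right ingredients (group completion, distributivity, telescopes) but neither constructs an $E_{\infty}$ comparison map nor proves the homological identification, so the argument as proposed does not close.
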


Thus, although $\bE(X,\tha)$ is constructed using the
$\sC$-space structure on $X$, the localizations of 
$sl_1\bE(X)$ depend only on $X$ as an $\sL$-space. When
$X=\OM^{\infty}R$, $\bE(X,\tha)$ is equivalent to the 
connective cover of $R$ and $sl_1\bE X$ is equivalent to $sl_1R$. 

\begin{cor} For an $E_{\infty}$ ring spectrum $R$, the localization  
$(sl_1R)_M$ is equivalent to the connected cover of $\bE((\OM^{\infty}R)_M,\xi)$.
\end{cor}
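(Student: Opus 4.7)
The plan is to apply Theorem \ref{unitsp} to the $E_{\infty}$ ring space $X = \OM^{\infty}R$ and then transport the resulting equivalence of $E_{\infty}$ spaces to an equivalence of connective spectra via Corollary \ref{homocat}. First I would verify the hypotheses. By the corollary following Proposition \ref{CmonSpace2}, $\OM^{\infty}R$ is naturally an $E_{\infty}$ ring space; it is ringlike because $\pi_0\OM^{\infty}R = \pi_0 R$ is already a ring, and the remarks preceding Theorem \ref{unitsp} note that ringlike $E_{\infty}$ ring spaces are convergent at any $M$. The submonoid hypothesis $\bZ_{\geq 0}\hookrightarrow \pi_0 R$ is part of the standing assumption.

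Next I would identify the spectrum $\bE(X,\tha)$. By Theorem \ref{mult}, the counit $\epz\colon \bE \OM^{\infty}R \rtarr R$ satisfies the property that $\OM^{\infty}\epz$ is a weak equivalence, and since $\bE\OM^{\infty}R$ is connective, this exhibits $\bE(\OM^{\infty}R,\tha)$ as (equivalent to) the connective cover of $R$ as an $E_{\infty}$ ring spectrum. Restricting to unit components gives an equivalence $\OM^{\infty}_1 \bE(\OM^{\infty}R,\tha) \simeq SL_1 R$ of $\sL$-spaces. By Definition \ref{gl}, $sl_1 R = \bE(SL_1 R)$ is then a $0$-connected connective spectrum with $\OM^{\infty}sl_1 R \simeq SL_1 R$.

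Now I would invoke Theorem \ref{unitsp} with this $X$: as $E_{\infty}$ spaces, there is an equivalence
\[
(SL_1 R)_M \;\simeq\; \OM^{\infty}_1 \bE((\OM^{\infty}R)_M,\xi).
\]
The right-hand side is the basepoint component of the $0^{\text{th}}$ space of $\bE((\OM^{\infty}R)_M,\xi)$, which is precisely the $0^{\text{th}}$ space of its connected cover. On the left, since $sl_1 R$ is $0$-connected, $M$-localization of a connective $0$-connected spectrum is computed at the level of $0^{\text{th}}$ spaces as an $E_{\infty}$ space, so $\OM^{\infty}(sl_1 R)_M \simeq (SL_1 R)_M$. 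Therefore $(sl_1 R)_M$ and the connected cover of $\bE((\OM^{\infty}R)_M,\xi)$ are $0$-connected connective spectra with equivalent $E_{\infty}$ $0^{\text{th}}$ spaces, and the equivalence of homotopy categories in Corollary \ref{homocat} upgrades this to the desired equivalence of spectra.

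The main obstacle is conceptual bookkeeping rather than any fresh technical input: one must carefully track two different uses of the additive infinite loop space machine $\bE$ — once applied to the connected $\sL$-space $SL_1 R$ (converted to a $\sC$-space via the product-of-operads trick outlined after Theorem \ref{add}) to produce $sl_1 R$, and once applied to the (non-grouplike) $\sL$-space $(\OM^{\infty}R)_M$, where the group completion content is nontrivial. Theorem \ref{unitsp} is the substantive statement matching these two after inverting $M$; the corollary is merely its spectrum-level translation.
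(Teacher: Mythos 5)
Your proposal is correct and follows the same route the paper intends: apply Theorem \ref{unitsp} to $X=\OM^{\infty}R$ (ringlike, hence convergent at $M$), use that $\bE(\OM^{\infty}R,\tha)$ is the connective cover of $R$ so that $sl_1\bE X\simeq sl_1R$, and translate the space-level equivalence to spectra via the machine equivalence of Corollary \ref{homocat}. The extra bookkeeping you supply (space-level versus spectrum-level $M$-localization for the $0$-connected spectrum $sl_1R$) is exactly what the paper leaves implicit.
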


\begin{proof}[Sketch proof of Theorem \ref{unitsp}] 
We repeat the key diagram from \cite[p. 196]{MQR}.   
Remember that $\GA X = \OM^{\infty}\bE X$; $\GA_1X$ and $\GA_MX$ 
denote the component of $1$ and the disjoint union of the 
components $\GA_mX$ for $m\in M$.  We have distinguished applications 
of the infinite loop space machine with respect to actions $\tha$
of $\sC$ and $\xi$ of $\sL$, and we write $\et_{\oplus}$ and $\et_{\otimes}$
for the corresponding group completions.  The letter $i$ always denotes an inclusion.  The following is a 
diagram of $\sL$-spaces and homotopy inverses of equivalences that are maps
of $\sL$-spaces.
\[\xymatrix{
& & & \GA(X_1,\xi) \ar[rr]^-{\GA i}\ar[dd]_<<<<<<<{\GA \et_{\oplus}} 
& & \GA(X_M,\xi) \ar[dd]^{\GA \et_{\oplus}}_{\htp} \\
 & & & & & \\
X_1 \ar[rr]^-{i} \ar[dd]_{\et_{\oplus}} \ar[uurrr]^{\et_{\otimes}}
& & X_M\ar[dd]^>>>>>>{\et_{\oplus}}  \ar[uurrr]_>>>>>>>>>>>>>>>>>{\et_{\otimes}}
& \GA(\GA_1(X,\tha),\xi) \ar[rr]^{\GA i} & & \GA(\GA_M(X,\tha),\xi)   \\
& & & & & \\
\GA_1(X,\tha) \ar[rr]_-{i} \ar[uurrr]^<<<<<<<<<<<<<<{\et_{\otimes}}
& & \GA_M(X,\tha)\ar[uurrr]_{\et_{\otimes}} & & &\\} \]
Note that the spaces on the left face are connected, so that their
images on the right face lie in the respective components of $1$. 
There are two steps.
\begin{enumerate}[(i)]
\item  If $X$ is ringlike, then the composite 
$\GA i\com \et_{\otimes}\colon X_1\rtarr \GA_1(X_M,\xi)$
on the top face is a localization of $X_1$ at $M$.
\item  If $X$ is convergent, then the vertical map $\GA\et_{\oplus}$ labelled
$\htp$ at the top right is a weak equivalence.
\end{enumerate}
Applying (i) to the bottom face, as we may, we see that the (zigzag) composite from
$\GA_1(X,\tha)$ at the bottom left to $\GA_1(X_M,\xi)$ 
at the top right is a localization at $M$.  

To prove (i) and (ii), write
the elements of $M$ in increasing order, $\{1, m_1, m_2, \cdots\}$, 
let $n_i = m_1\cdots m_i$, and consider the following homotopy 
commutative ladder.

{\small
\[ \xymatrix{
X_1\ar[r]^{m_1}\ar[d]_{\et_{\otimes}}  & X_{n_1} \ar[r] \ar[d]^{\et_{\otimes}}  & \cdots  \ar[r] & X_{n_{i-1}}  \ar[r]^-{m_i}\ar[d]^{\et_{\otimes}} 
& X_{n_i}\ar[d]^{\et_{\otimes}} \ar[r] &  \cdots \\
\GA_1(X_M,\xi)\ar[r]_{m_1} & \GA_{n_1}(X_M,\xi) \ar[r]  & \cdots  \ar[r] 
& \GA_{n_{i-1}}(X_M,\xi)
\ar[r]_-{m_i} & \GA_{n_i}(X_M,\xi) \ar[r] & \cdots \\} \]
}

\noindent
Here the translations by $m_i$ mean multiplication (using the $H$-space 
structure induced by the action $\xi$) by an element of $X_{m_i}$. 

Let $\bar{X}_M$ be the telescope of the top row.  Take homology with coefficients in a commutative ring.  The $m_i$ in the bottom row are equivalences since $\GA(X_M,\xi)$ is grouplike, so the diagram gives a map
$\bar{X}\rtarr \GA_1(X_M,\xi)$. The homological definition of a group completion, applied to $\et_{\otimes}$, implies that this map is a homology isomorphism. 
Note that the leftmost arrow $\et_{\otimes}$ factors through 
$\GA(X_1,\xi) = \GA_1(X_1,\xi)$. Exploiting formulas that relate 
the additive and multiplicative Pontryagin products on $H_*(X)$, we can
check that $\tilde{H}_*(\bar{X};\bF_p)=0$ if $p$ divides an element $m\in M$.
The point is that $m$ is the sum of $m$ copies of $1$, and 
there is a distributivity formula for $x\cdot m$ in terms of the 
additive $H$-space structure $\ast$.  This implies that the space 
$\GA_1(X_M,\xi)$ is $M$-local. 

For (i), write $\ast( -n)\colon X_n\rtarr X_0$ for the equivalence given by
using $\ast$ to send $x$ to $x\ast y_{-n}$ for a 
point $y_{-n}\in X_{-n}$.  Formulas in the definition of a 
$(\sC,\sL)$-space imply that the following ladder is homotopy commutative.
\[ \xymatrix{
X_1\ar[r]^{m_1}\ar[d]_{\ast(- 1)}  & X_{n_1} \ar[r] \ar[d]^{\ast(-n_1)}  & \cdots  \ar[r] & X_{n_{i-1}}  \ar[r]^-{m_i}\ar[d]^{\ast( - n_{i-1})} 
& X_{n_i}\ar[d]^{\ast(-n_i)} \ar[r] &  \cdots \\
X_0 \ar[r]_{m_1} & X_0 \ar[r]  & \cdots  \ar[r] 
& X_0 \ar[r]_-{m_i} & X_0 \ar[r] & \cdots \\} \]
A standard construction of localizations of $H$-spaces gives that the 
telescope of the bottom row is a localization $X_0\rtarr (X_0)[M^{-1}]$, 
hence so is the telescope $X_1\rtarr \bar{X}_M$ of the top row, hence
so is its composite with $\bar{X}_M\rtarr \GA_1(X_M,\xi)$. This proves (i).

For (ii), we consider the additive analogue of our first ladder:
\[ \xymatrix{
X_1\ar[r]^{m_1}\ar[d]_{\et_{\oplus}}  & X_{n_1} \ar[r] \ar[d]^{\et_{\oplus}}  & \cdots  \ar[r] & X_{n_{i-1}}  \ar[r]^-{m_i}\ar[d]^{\et_{\oplus}} 
& X_{n_i}\ar[d]^{\et_{\oplus}} \ar[r] &  \cdots \\
\GA_1(X,\tha)\ar[r]_{m_1} & \GA_{n_1}(X,\tha) \ar[r]  & \cdots  \ar[r] 
& \GA_{n_{i-1}}(X,\tha)
\ar[r]_-{m_i} & \GA_{n_i}(X,\tha) \ar[r] & \cdots \\} \]
We say that $X$ is convergent at $M$ if, for each prime $p$ which does not divide any element of $M$, there is an eventually increasing sequence 
$n_i(p)$ such that 
\[ (\et_{\oplus})_*\colon H_j(X_i;\bF_p) \rtarr H_j(\GA_i(X,\tha);\bF_p) \]
is an isomorphism for all $j\leq n_i(p)$.  With this condition, the induced
map of telescopes is a mod $p$ homology isomorphism for such primes $p$. This 
implies the same statement for the map
\[ \GA_1\et_{\oplus}\colon \GA_1(X_M,\xi)\rtarr \GA_1(\GA_M(X,\tha),\xi).\]  
Since this is a map between $M$-local spaces, it is an equivalence.  This
proves (ii) on components of $1$, and it follows on other components.
\end{proof}

For a general example, consider $CY$ for an $L_+$-space $Y$.  

\begin{cor}\label{Tornehave}  There is a natural commutative diagram
of $E_{\infty}$ spaces
\[ \xymatrix{
\GA_1(CY,\tha)\ar[d] \ar[r]^-{\GA_1\al} 
& \GA_1(QY,\tha) \ar[d]& Q_1Y \ar[l]_-{\et_{\oplus}} \ar[d]\\
\GA_1(C_MY,\xi)\ar[r]_-{\GA_1\al} 
& \GA_1(Q_MY,\xi) & \GA_1(Q_MY,\xi) \ar@{=}[l] \\} \]
in which the horizontal arrows are weak equivalences
and the vertical arrows are localizations at $M$.
\end{cor}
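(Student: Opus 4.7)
The plan is to deduce this from Theorem \ref{unitsp} applied twice, combined with the structural properties of $\al \colon CY \to QY$. Since $Y$ is an $\sL_+$-space, Propositions \ref{CmonSpace} and \ref{formalQ} endow $CY$ and $QY$ with $(\sC,\sL)$-space structures, and Proposition \ref{CmonSpace2} says that $\al$ is a map of monads on $\sL_+[\sT]$, so $\al$ is a map of $(\sC,\sL)$-spaces, which restricts to an $\sL$-map $C_M Y \to Q_M Y$ on $M$-components. All four entries in the outer columns of the diagram therefore make sense, and $\al$ is a natural transformation between $(\sC,\sL)$-space valued functors that we can feed into both the additive and multiplicative halves of Theorem \ref{unitsp}.

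First I would obtain the left and middle vertical arrows as localizations at $M$ by applying Theorem \ref{unitsp} to $X = CY$ and $X = QY$; the required convergence at $M$ holds for $CY$ by \cite[VII.5.2]{MQR} as cited after Theorem \ref{unitsp}, and for $QY$ because $QY$ is additively grouplike. The left square commutes by naturality of the zigzag in Theorem \ref{unitsp} with respect to the $(\sC,\sL)$-map $\al$. Next I would verify the top horizontal equivalences. Since $QY = \OM^\infty \SI^\infty Y$ is the $0$-th space of the connective spectrum $\SI^\infty Y$, Theorem \ref{add} tells us that the counit $\bE(QY,\tha) \to \SI^\infty Y$ is a weak equivalence, so $\et_{\oplus} \colon Q_1 Y \to \GA_1(QY,\tha)$ is a weak equivalence. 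The map $\GA_1 \al$ is a weak equivalence because $\al$ is a group completion (Step \ref{step1}), and the additive machine $\bE$ sends group completions of $\sC$-spaces to weak equivalences of connective spectra: the two group completions $\et\colon CY \to \GA(CY,\tha)$ and $\et_{\oplus}\circ\al\colon CY \to \GA(QY,\tha)$ share source, so by the homological characterization of group completion the induced map $\bE\al$ is a weak equivalence.

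Finally, the right vertical is defined to be the composite of $\et_{\oplus}$ with the middle vertical arrow, so the right square commutes by construction; this composite is a localization at $M$ as the composite of a weak equivalence and a localization. The bottom $\GA_1\al$ is then a weak equivalence because in the commutative left square the vertical maps are both localizations at $M$ and the top map is a weak equivalence, whence the bottom map is a localization of a weak equivalence and so a weak equivalence itself. The main obstacle is verifying that the zigzag constructed in the proof of Theorem \ref{unitsp}, which is assembled from the two-sided bar construction and telescope ladders built by translating along points of $X_{m_i}$ via the multiplicative operad action, is natural enough in $X$ for the left square to commute on the nose; since $\al$ intertwines both the additive and the multiplicative operad actions, one can choose the translation elements in $CY$ and their images in $QY$ compatibly, so naturality reduces to properties of $\al$ already recorded in Proposition \ref{CmonSpace2}.
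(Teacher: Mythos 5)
Your proposal is correct and follows essentially the route the paper intends: the corollary is just Theorem \ref{unitsp} (and part (i) of its proof, which supplies the right vertical arrow $\GA i\com\et_{\otimes}$) applied to the $E_\infty$ ring spaces $CY$ and $QY$, combined with naturality in the $E_\infty$ ring map $\al$ and with the group completion theorem preceding it for the top horizontal equivalences. Your closing worry about choosing translation elements compatibly is unnecessary: the vertical localizations are the zigzags $(\GA\et_{\oplus})^{-1}\com \GA i\com \et_{\otimes}$, whose constituents are natural in the $E_\infty$ ring space $X$, the telescope ladders entering only into the proof that these maps are localizations, not into their construction.
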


Now specialize to the case $Y=S^0$.  Then $Q_1S^0$, the
unit component of the $0^{th}$ space of the sphere spectrum,
is the space $SL_1S = SF$ of degree $1$ stable homotopy equivalences 
of spheres.  We see that
its localization at $M$ is the infinite loop space constructed from the
$\sL$-space $C_MS^0$.
The latter space is the disjoint union of the Eilenberg-Mac\,Lane spaces
$\sC(m)/\SI_m = K(\SI_m,1)$, given an $E_{\infty}$ space 
structure that realizes the products $\SI_m\times \SI_n\rtarr \SI_{mn}$ 
determined by lexicographically ordering the products of sets of $m$ and $n$ 
elements for $m,n\in M$.  Thus the localizations of $SF$ can be recovered from
symmetric groups in a way that captures their infinite loop structures.

\section{$E_{\infty}$ ring spectra and commutative $S$-algebras}\label{EKMM}

Jumping ahead over twenty years, we here review the basic definitions of EKMM \cite{EKMM}, leaving
all details to that source. However, to establish context, let
us first recall the following result of Gaunce Lewis \cite{Lewis}. 

\begin{thm}\label{Lewis} Let $\sS$ be any category that is enriched in based topological
spaces and satisfies the following three properties.
\begin{enumerate}[(i)]
\item $\sS$ is closed symmetric monoidal under continuous smash product 
and function spectra functors $\sma$ and $F$ that satisfy the topological
adjunction 
\[ \sS(E\sma E', E'') \iso \sS(E,F(E',E'')). \]
\item There are continuous functors $\SI^{\infty}$ and
$\OM^{\infty}$ between spaces and spectra that satisfy the topological adjunction 
\[ \sS(\SI^{\infty}X,E)\iso \sT(X,\OM^{\infty}E). \]
\item The unit for the smash product in $\sS$ is $S\equiv \SI^{\infty}S^0$.
\end{enumerate}
Then, for any commutative monoid $R$ in $\sS$, such as $S$ itself, 
the component $SL_1(R)$ of the identity element in $\OM^{\infty} R$ is a product of 
Eilenberg-MacLane spaces.
\end{thm}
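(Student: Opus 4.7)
The plan is to equip $\OM^{\infty} R$ with the structure of a commutative topological monoid under cartesian product, note that its identity component $SL_1(R)$ is a submonoid, and then invoke the classical theorem of Moore that a connected commutative topological monoid (of the homotopy type of a CW complex) is weakly equivalent to a product of Eilenberg--MacLane spaces.

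First I would verify that $\SI^{\infty}$ is strong symmetric monoidal, so that by doctrinal adjunction $\OM^{\infty}$ is lax symmetric monoidal. The topological (enriched) form of the adjunction in hypothesis (ii), together with the closed structures in (i), gives a natural isomorphism $\OM^{\infty} F(\SI^{\infty} Y, E) \iso F_{\sT}(Y,\OM^{\infty} E)$. Combined with the unit hypothesis (iii), a Yoneda argument comparing
\[ \sS(\SI^{\infty} X \sma \SI^{\infty} Y, E) \iso \sT(X, F_{\sT}(Y,\OM^{\infty} E)) \iso \sT(X \sma Y, \OM^{\infty} E) \iso \sS(\SI^{\infty}(X\sma Y), E) \]
delivers a natural isomorphism $\SI^{\infty}(X\sma Y) \iso \SI^{\infty} X \sma \SI^{\infty} Y$ compatible with the symmetry, so the associated lax structure maps $\OM^{\infty} E \sma \OM^{\infty} E' \rtarr \OM^{\infty}(E\sma E')$ and $S^0 \rtarr \OM^{\infty} S$ are at hand.

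Next, applying the lax symmetric monoidal functor $\OM^{\infty}$ to the commutative monoid $R$ makes $\OM^{\infty} R$ a commutative monoid in based spaces under $\sma$: the unit is the composite $S^0 \rtarr \OM^{\infty} S \rtarr \OM^{\infty} R$, picking out a point $1$, and the multiplication is $\OM^{\infty} R \sma \OM^{\infty} R \rtarr \OM^{\infty}(R\sma R) \rtarr \OM^{\infty} R$. Precomposing with the canonical quotient $\OM^{\infty} R \times \OM^{\infty} R \rtarr \OM^{\infty} R \sma \OM^{\infty} R$ produces a continuous, strictly associative, strictly commutative product on the underlying unbased space $\OM^{\infty} R$, for which $1$ is a two-sided identity; hence $\OM^{\infty} R$ is a commutative topological monoid under $\times$. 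Because the multiplication is continuous and fixes the pair $(1,1)\mapsto 1$, the path component of $1$ is closed under it, and $SL_1(R)$ inherits the structure of a \emph{connected} commutative topological monoid.

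Finally I would invoke Moore's theorem: any path-connected commutative topological monoid of the homotopy type of a CW complex is weakly equivalent to a (generalized) product of Eilenberg--MacLane spaces. The classical argument observes that the bar construction $B(SL_1 R)$ is again such a monoid, iterates to obtain a sequence of deloopings, and then uses strict commutativity to split off successive homotopy groups as direct factors, killing all $k$-invariants. The main obstacle in the proof is Step~1: extracting strong monoidality of $\SI^{\infty}$ from the abstract hypotheses, since this is the step that genuinely uses (iii) and the topological form of the adjunction. Once it is in hand, the rest is formal, modulo a mild point-set check that $SL_1(R)$ has CW homotopy type so that Moore's theorem applies.
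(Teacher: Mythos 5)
Your proposal is correct and follows essentially the same route as the paper: make $\OM^{\infty}$ lax symmetric monoidal, observe that $\OM^{\infty}R$ (via $\times\rtarr\sma$) is then a commutative topological monoid whose identity component $SL_1R$ is connected, and conclude by Moore's theorem. The only difference is bookkeeping at the first step: the paper simply cites Lewis's lemmas (hypotheses (i)--(iii) imply $\sS$ is tensored over $\sT$, which implies $\OM^{\infty}$ is lax symmetric monoidal), whereas you derive strong symmetric monoidality of $\SI^{\infty}$ directly by the enriched Yoneda argument, which is a fine and self-contained substitute.
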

\begin{proof} The enrichment of the adjunctions means that the 
displayed isomorphisms are homeomorphisms. By \cite[3.4]{Lewis},  
the hypotheses imply that $\sS$ is tensored over $\sT$. In turn, 
by \cite[3.2]{Lewis}, this implies that the functor $\OM^{\infty}$
is lax symmetric monoidal with respect to the unit 
$\et\colon S^0\rtarr \OM^{\infty}\SI^{\infty}S^0 = \OM^{\infty}S$
of the adjunction and a natural transformation
\[ \ph\colon \OM^{\infty}D\sma \OM^{\infty}E\rtarr \OM^{\infty}(D\sma E).\]
Now let $D=E=R$ with product $\mu$ and unit $\et\colon S\rtarr R$. The adjoint
of $\et$ is a map $S^0\rtarr \OM^{\infty}R$, and we let $1\in \OM^{\infty}R$ 
be the image of $1\in S^0$.  The composite
\[\xymatrix@1{
\OM^{\infty}E\times \OM^{\infty}R \ar[r] &
\OM^{\infty}R\sma \OM^{\infty}R
\ar[r]^-{\ph} &\OM^{\infty}(R\sma R) \ar[r]^-{\OM^{\infty}\mu} & \OM^{\infty}R\\} \]
gives $\OM^{\infty}R$ a structure of commutative topological 
monoid with unit $1$. Restricting to the component $SL_1R$ of $1$, 
we have a connected commutative topological monoid,
and Moore's theorem (e.g. \cite[3.6]{Geo}) gives the conclusion.
\end{proof}

As Lewis goes on to say, if $\OM^{\infty}\SI^{\infty}X$ is homeomorphic
under $X$ to $QX$, as we have seen holds for the category $\sS$ of (LMS) spectra, and if (i)--(iii) hold, then we can conclude in particular that $SF=SL_1S$ is a product of  Eilenberg--Mac\,Lane spaces, which is false.   
We interpolate a model theoretic variant of this contradiction.

\begin{rem}\label{Lew2}
The sphere spectrum $S$ is a commutative ring spectrum in
any symmetric monoidal category of spectra $\sS$ with unit object $S$. 
Suppose that $S$ is cofibrant in some model structure on $\sS$ whose
homotopy category is equivalent to the stable homotopy category and
whose fibrant objects are $\OM$-prespectra.  More precisely, 
we require an underlying prespectrum functor $U\colon \sS\rtarr \sP$
such that $UE$ is an $\OM$-prespectrum if $E$ is fibrant, and we also
require the resulting $0^{th}$ space functor $U_0$ to be lax symmetric
monoidal. Then we cannot 
construct a model category of commutative ring spectra by letting the 
weak equivalences and fibrations be the maps that are weak equivalences 
and fibrations in $\sS$.  If we could, a fibrant approximation of $S$ 
as a commutative ring spectrum would be an $\OM$-spectrum whose $0^{th}$ 
space is equivalent to $QS^0$.  Its $1$-component would be a connected commutative monoid equivalent to $SF$. 
\end{rem}

All good modern categories of spectra satisfy (i) and (iii) (or their simplicially enriched analogues) and therefore cannot satisfy (ii).  
However, as our summary so far should make clear, one must not let go 
of (ii) lightly.  One needs something like it to avoid severing the 
relationship between spectrum and space level homotopy theory.  Our 
summaries of modern definitions will focus on the relationship between 
spectra and spaces. Since we are now switching towards a focus on stable homotopy theory, we start to keep track of model structures.  
Returning to our fixed category $\sS$ of (LMS) spectra, we shall describe a 
sequence of Quillen equivalences, in which the right adjoints labelled $\ell$ are both inclusions of subcategories.

\[ \xymatrix{
*+[F-:<9pt>]{\sP} \ar[d]<.5ex>^-{L} \\
*+[F-:<9pt>]{\sS} \ar[d]<.5ex>^-{\bL} \ar[u]<.5ex>^-{\ell}\\
*+[F-:<9pt>]{\bL[\sS]} \ar[d]<.5ex>^-{S\sma_{\sL}(-)} \ar[u]<.5ex>^-{\ell}\\
 *+[F-:<9pt>]{\sM_S} \ar[u]<.5ex>^-{F_{\sL}(S,-)} \\} \]
 
 \vspace{2mm}

The category $\sP$ of prespectra has a level model structure whose weak equivalences and fibrations are defined levelwise, and it has a stable model structure whose weak equivalences are the maps that induce
isomorphisms of (stabilized) homotopy groups and whose cofibrations are the level cofibrations; its fibrant objects are the $\OM$-prespectra. The category 
$\sS$ of spectra is a model category whose level model structure and stable model structures coincide.  That is, the weak equivalences and fibrations are 
defined levelwise, and these are already the correct stable weak equivalences
because the colimits that define the homotopy groups of a spectrum run over a 
system of isomorphisms.  The spectrification functor $L$ and inclusion $\ell$
give a Quillen equivalence between $\sP$ and $\sS$. 

Of course, $\sS$ satisfies (ii) but not (i) and (iii).
We take the main step towards the latter properties by introducing 
the category $\bL[\sS]$ of $\bL$-spectra. 

The space $\sL(1)$ is a monoid under composition, and we have the notion 
of an action of $\sL(1)$ on a spectrum $E$.  It is given by a map 
\[ \xi\colon \bL E = \sL(1)\thp E\rtarr E \]
that is unital and associative in the evident sense. 
Since $\sL(1)$ is contractible, the unit condition implies that 
$\xi$ must be a weak equivalence. Moreover, $\bL E$ is an $\bL$-spectrum
for any spectrum $E$, and the action map $\xi\colon \bL E\rtarr E$ is a map 
of $\bL$-spectra 
for any $\bL$-spectrum $E$. The inclusion $\ell\colon \bL[\sS]\rtarr \sS$ forgets the action maps. It is right adjoint to the free $\bL$-spectrum functor $\bL\colon \sS\rtarr \bL[\sS]$.  Define the weak equivalences and fibrations of $\bL$-spectra to be the maps 
$f$ such that $\ell f$ is a weak equivalence or fibration.  Then $\bL[\sS]$ 
is a model category and $(\bL,\ell)$ is a Quillen equivalence between $\sS$ and
$\bL[\sS]$.   Indeed, the unit $\et\colon E\rtarr \ell\bL E$ and counit 
$\xi\colon \bL\ell E\rtarr E$  of the adjunction are weak equivalences, 
and every object in both categories is fibrant, a very convenient property. 
 
Using the untwisting isomorphism (\ref{untwist}) and the projection 
$\sL(1)_+ \rtarr S^0$, we see that the spectra $\SI^{\infty}X$ are 
naturally $\bL$-spectra.  However $S =\SI^{\infty} S^0$, which is cofibrant 
in $\sS$, is not cofibrant in $\bL[\sS]$; rather, $\bL S$ is a cofibrant approximation.

We have a commutative and associative\footnote{This crucial property is a consequence of a remarkable motivating observation, due to Mike Hopkins, about special properties of the 
structure maps of the linear isometries operad.}  smash product 
$E\sma_\sL E'$ in $\bL[\sS]$; we write $\ta$ for the commutativity
isomorphism $E\sma_{\sL} E'\rtarr E'\sma_{\sL} E$.  The smash product is defined as a
coequalizer $\sL(2)\thp_{\sL(1)\times \sL(1)} E\sma E'$, but we refer the
reader to \cite{EKMM} for details.  There is a natural unit 
map $\la\colon S\sma_{\sL} E\rtarr E$.  It is a weak equivalence for all
$\bL$-spectra $E$, but it is not in general an isomorphism.  That is, 
$S$ is only a weak unit.  

Moreover, there is a natural isomorphism of $\bL$-spectra
\[ \SI^{\infty}(X\sma Y)\iso \SI^{\infty} X\sma_{\sL} \SI^{\infty} Y.\]
Note, however, that the $(\SI^{\infty},\OM^{\infty})$ adjunction must now change.  We may reasonably
continue to write $\OM^{\infty}$ for the $0^{th}$ space functor 
$\OM^{\infty}\com \ell$, but its left adjoint is now the composite
$\bL\com\SI^{\infty}$.  

While Lewis's contradictory desiderata do not hold, we are not too 
far off since we still have a sensible $0^{th}$ space functor.  We 
are also very close to a description of $E_{\infty}$ ring spectra as 
commutative monoids in a symmetric monoidal category.

\begin{defn}\label{Lcommon}
A commutative monoid in $\bL[\sS]$ is an $\bL$-spectrum
$R$ with a unit map $\et\colon S\rtarr R$ and a commutative
and associative product $\ph\colon R\sma_{\sL} R\rtarr R$ such that
the following unit diagram is commutative 
\[ \xymatrix{
S\sma_{\sL} R\ar[r]^-{\et\sma\id} \ar[dr]_{\la} 
& R\sma_{\sL} R \ar[d]^{\ph} 
& R\sma_{\sL} S \ar[l]_-{\id\sma\et}
\ar[dl]^{\la\ta} \\
& R & }\]
\end{defn}

The only difference from an honest commutative monoid is that
the diagonal unit arrows are weak equivalences rather than 
isomorphisms.  Thinking of the unit maps $e\colon S\rtarr R$
of $E_{\infty}$ ring spectra as preassigned, we can specify a
product $\ast$ in the category $\bL[\sS]_e$ of $\bL$-spectra under $S$
that gives that category a symmetric monoidal structure, and then 
an $E_{\infty}$ ring spectrum is an honest commutative monoid in
that category \cite[XIII.1.16]{EKMM}.  We prefer to keep to the 
perspective of Definition \ref{Lcommon}, and \cite[II.4.6]{EKMM} gives the 
following result. 

\begin{thm}  The category of commutative monoids in $\bL[\sS]$ is 
isomorphic to the category of $E_{\infty}$ ring spectra.
\end{thm}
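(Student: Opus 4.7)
The strategy is to identify both structures with the same underlying data, namely a coherent system of maps $\xi_j : \sL(j) \thp R^{[j]} \rtarr R$. The technical heart is the formula
\[ R^{\sma_{\sL} j} \iso \sL(j) \thp_{\sL(1)^j} R^{[j]}, \]
where $\sL(1)^j$ acts on $\sL(j)$ via the operad structure map $\ga\colon \sL(j)\times \sL(1)^j \rtarr \sL(j)$ and on $R^{[j]}$ through the $\bL$-action on each factor. This formula is proved by induction on $j$, unwinding the definition of $\sma_{\sL}$ as a coequalizer $\sL(2)\thp_{\sL(1)\times\sL(1)}(-\barwedge-)$, applying it to $R\sma_{\sL}(R\sma_{\sL}\cdots\sma_{\sL}R)$, and using the canonical iteration isomorphisms (\ref{butyes}) to collapse the nested twisted half-smashes into a single $\sL(j)\thp(-)$. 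Together with the identifications $\sL(0)\thp R^{[0]} = \SI^{\infty}S^0 = S$, this lets us translate freely between commutative monoid data and operadic action data.

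From $E_{\infty}$ ring spectrum to commutative monoid: $\xi_1$ provides the $\bL$-structure by definition; take $\et = \xi_0 \colon S\rtarr R$; and $\xi_2\colon \sL(2)\thp R^{[2]}\rtarr R$ is $\sL(1)\times\sL(1)$-equivariant by the $\ga$-compatibility applied to $\ga(-;\xi_1,\xi_1)$, so it factors through the coequalizer to yield $\ph\colon R\sma_{\sL}R\rtarr R$. Commutativity of $\ph$ comes from the $\SI_2$-equivariance of $\xi_2$; the unit diagram of Definition \ref{Lcommon} comes from $\ga$-compatibility applied to $\ga(\xi_2;\id,\xi_0)$ and $\ga(\xi_2;\xi_0,\id)$; associativity comes from comparing the two factorizations of $\xi_3$ through $\ga(\xi_2;\xi_2,\id)$ and $\ga(\xi_2;\id,\xi_2)$.

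Conversely, given $(R,\ph,\et)$, associativity and commutativity make the iterated $j$-fold product $\ph^{(j)}\colon R^{\sma_{\sL}j}\rtarr R$ unambiguous for every $j$; precomposing with the quotient $\sL(j)\thp R^{[j]}\rtarr R^{\sma_{\sL}j}$ coming from the formula above defines $\xi_j$, with $\xi_0 = \et$ and $\xi_1$ the $\bL$-action. The $\SI_j$-equivariance of $\xi_j$ is commutativity of $\ph$ in disguise, and the $\ga$-compatibility
\[ \xi_{j_1+\cdots+j_k}\com \ga(g;f_1,\ldots,f_k)_* = \xi_k \com (\xi_{j_1}\barwedge\cdots\barwedge\xi_{j_k}) \]
is forced by the compatibility of the iteration formula with (\ref{butyes}) together with associativity of $\ph$ applied to the various bracketings.

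The main obstacle is verifying the smash power formula and, more delicately, showing that the iterated $\sL(1)^j$-quotients occurring in the inductive construction of $R^{\sma_{\sL}j}$ do in fact assemble into the single quotient by the $\sL(1)^j$-action defined via $\ga$; this is where the special combinatorics of the linear isometries operad (its compatibility with composition) is used. Once that is in hand, one checks that the two assignments are mutually inverse \emph{on the nose}: $\xi_2$ recovers $\ph$ by descent, $\ph$ recovers $\xi_2$ by composition with the quotient, and the higher $\xi_j$ are forced in both directions. Because the passages use only strict (iso)morphisms rather than zigzags, the functors give an isomorphism of categories, not merely an equivalence.
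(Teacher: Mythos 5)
Your proposal is correct and is essentially the argument the paper itself relies on: the paper gives no independent proof but cites \cite[II.4.6]{EKMM}, whose proof rests on exactly your smash-power formula $R^{\sma_{\sL}j}\iso \sL(j)\thp_{\sL(1)^j}R^{[j]}$, which in turn comes from Hopkins' lemma that $\ga$ induces an isomorphism $\sL(2)\times_{\sL(1)\times\sL(1)}(\sL(i)\times\sL(j))\iso\sL(i+j)$ --- the ``special combinatorics of the linear isometries operad'' you correctly flag as the technical heart. So this is the same route as the cited proof, with the translation between $\xi_j$-data and $(\ph,\et)$-data carried out just as there.
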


Obviously, we have not lost the connection with $E_{\infty}$ ring spaces.
Since we are used to working in symmetric monoidal categories and want to 
work in a category of spectra rather than of spectra under $S$, we take 
one further step. The unit map $\la\colon S\sma_{\sL}E\rtarr E$ is often
an isomorphism.  This holds when $E=\SI^{\infty}X$ and when 
$E = S\sma_{\sL} E'$ for another $\bL$-spectrum $E'$.  We define an 
$S$-module to be an $\bL$-spectrum $E$ for which $\la$ is an isomorphism, 
and we let $\sM_S$ be the category of $S$-modules.  It is symmetric monoidal with unit $S$ under the smash product $E\sma_S E' = E\sma_{\sL} E'$ that is
inherited from $\bL[\sS]$, and it also inherits a natural isomorphism of $S$-modules
\[ \SI^{\infty}(X\sma Y)\iso \SI^{\infty} X\sma_S \SI^{\infty} Y.\]

Commutative monoids in $\sM_S$ are called commutative $S$-algebras.
They are those commutative monoids in $\bL[\sS]$ whose unit maps are isomorphisms.  Thus they are especially nice $E_{\infty}$ ring spectra. 
For any $E_{\infty}$ ring spectrum $R$, $S\sma_{\sL} R$ is a commutative
$S$-algebra and the unit equivalence  $S\sma_{\sL} R\rtarr R$ is a map of 
$E_{\infty}$ ring spectra.  Thus there is no real loss of generality in 
restricting attention to the commutative $S$-algebras.  Their $0^{th}$ 
spaces are still $E_{\infty}$ ring spaces.   

However the $0^{th}$ space functor $\OM^{\infty}\colon \sM_S\rtarr \sT$ is not
a right adjoint.  The functor 
$S\sma_{\sL}(-)\colon \bL[\sS]\rtarr \sM_S$ is {\em right} adjoint
to the inclusion $\ell\colon \sM_S\rtarr \bL[\sS]$, and it has a right adjoint
$F_{\sL}(S,-)$.  Thus, for $D\in\bL[\sS]$ and $E\in \sM_S$,  we have
\[ \sM_S(E, S\sma_{\sL} D)\iso \bL[\sS](\ell E, D)\]
and
\[ \sM_S(S\sma_{\sL}D, E)\iso  \bL[\sS](D, F_{\sL}(S,E)). \]
Letting the weak equivalences and fibrations in $\sM_S$ be created by the functor $F_{\sL}(S,-)$, the second adjunction gives a Quillen equivalence between $\bL[\sS]$ and $\sM_S$.  Since there is a natural weak equivalence 
$\tilde {\la}\colon \ell E\rtarr F_{\sL}(S,E)$, the weak equivalences, but 
not the fibrations, are also created by $\ell$.  On the $0^{th}$ space level, 
$\tilde{\la}$ induces a natural weak equivalence 
\[  \OM^{\infty} \ell E\htp \OM^{\infty} F_{\sL}(S,E). \]
We conclude that we have lost no $0^{th}$ space information beyond that which
would lead to a contradiction to Theorem \ref{Lewis} in our passage from $\sS$ to 
$\sM_S$. 

As explained in \cite[II\S2]{EKMM}, there is actually a ``mirror image'' category $\sM^S$ that is equivalent
to $\sM_S$ and whose $0^{th}$ space functor is equivalent, rather than just
weakly equivalent, to the right adjoint of a functor $\sS\rtarr \sM^S$.
It is the subcategory of objects in $\bL[\sS]$ whose counit maps 
$\tilde{\la}\colon E\rtarr F_{\sL}(S,E)$ are isomorphisms. It has adjunctions
that are mirror image to those of $\sM_S$, switching left and right.
Writing $r\colon \sM^S\rtarr \bL[\sS]$ for the inclusion and taking
$D\in\bL[\sS]$ and $E\in \sM^S$, we have 
 \[ \sM^S(F_{\sL}(S,D),E) \iso \bL[\sS](D,rE)\]
 and
\[ \sM^S(E, F_{\sL}(S,D))\iso  \bL[\sS](\ell (S\sma_{\sL} rE), D). \]

The following theorem from \cite[VII\S4]{EKMM} is more central to our story
and should be compared with Remark \ref{Lew2}. 

\begin{thm}\label{QuillE}  
The category of $E_{\infty}$ ring spectra is a Quillen
model category with fibrations and weak equivalences created by the
forgetful functor to $\bL[\sS]$.  The category of commutative $S$-algebras 
is a Quillen model category with fibrations and weak equivalences created 
by the forgetful functor to $\sM_S$.
\end{thm}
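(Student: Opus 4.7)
The plan is to apply the standard lifting theorem for cofibrantly generated model structures (Kan's transfer theorem) along the free--forgetful adjunction in each case. For $E_{\infty}$ ring spectra, the forgetful functor to $\bL[\sS]$ has left adjoint the monad $L_+$ of Section \ref{monad2}; for commutative $S$-algebras, the forgetful functor to $\sM_S$ has left adjoint the free commutative $S$-algebra functor $\bP E = \bigvee_{j \geq 0} E^{(j)}/\SI_j$, where $E^{(j)}$ denotes the $j$-fold $\sma_S$ power. Both $\bL[\sS]$ and $\sM_S$ are cofibrantly generated, their fibrations and weak equivalences being created (up to the natural equivalence $\tilde{\la}$) from those in $\sS$. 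The transfer theorem then reduces the proof to verifying two hypotheses: a smallness condition on the domains of the free images of the generating sets $I, J$, and the key homotopical condition that any relative cell complex built from free maps $\bP i$ (resp.\ $L_+ i$) with $i$ a generating acyclic cofibration is a weak equivalence after forgetting to the underlying category.

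Smallness follows from routine transfinite small-object arguments, since the free monads preserve filtered colimits and the required smallness in $\sS$ is standard. The crux is the homotopical hypothesis, which I would reduce to the following assertion: for any generating acyclic cofibration $i : A \to B$ in $\sM_S$ (resp.\ $\bL[\sS]$), any commutative monoid $R$, and any attaching map $\bP A \to R$ (resp.\ $L_+ A \to R$), the canonical map $R \to R \cup_{\bP A} \bP B$ is a weak equivalence underneath. The standard attack is to filter the pushout so that the $j$th associated graded piece has the form $R \sma_S \sL(j)_+ \sma_{\SI_j} Q^j(i)$, where $Q^j(i)$ is the $j$-fold iterated pushout-product cube built from $i$. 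Contractibility of $\sL(j)$ together with freeness of its $\SI_j$-action identifies these layers with homotopy orbit spectra; once one checks that $Q^j(i)$ is $\SI_j$-equivariantly acyclic cofibrant whenever $i$ is acyclic cofibrant, each layer is an acyclic cofibration in the ambient category, and summing over $j$ yields the required weak equivalence.

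The main obstacle will be carrying out the filtration analysis and verifying that each quotient layer is genuinely an acyclic cofibration, which requires delicate interaction between cellular induction on $i$ and the combinatorics of how $\sma_S$ (or $\sma_{\sL}$) interacts with $\SI_j$-actions. The geometry of $\sL$ is essential here: because $\sL(j)$ is a free contractible $\SI_j$-space, it supplies a canonical functorial $\SI_j$-cofibrant replacement of the unit, which is precisely what makes symmetric powers homotopically well-behaved in $\bL[\sS]$ and $\sM_S$, in contrast with $\sS$ or $\sP$ where Theorem \ref{Lewis} and Remark \ref{Lew2} forbid such a model structure. Full technical details are in \cite[Chapters VII, VIII]{EKMM}, but the conceptual content is that Hopkins' observation about $\sL$ invoked in Section \ref{EKMM} is exactly what is required to rigidify the free commutative monoid functor into a homotopically meaningful left Quillen functor, so that Kan's theorem delivers both model structures simultaneously with the stated creation property.
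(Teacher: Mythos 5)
The paper does not prove this theorem itself; it simply cites \cite[VII\S 4]{EKMM}, and your outline is essentially the argument given there: transfer the model structure along the free--forgetful adjunctions, the crux being that the operadic smash product builds the free contractible $\SI_j$-spaces $\sL(j)$ into smash powers, so that (as in Proposition \ref{rational}) symmetric powers of cofibrant objects are homotopy symmetric powers and pushouts along free maps on acyclic cofibrations are underlying weak equivalences --- exactly the point that fails in $\sS$ itself by Remark \ref{Lew2}. One small correction: the left adjoint of the forgetful functor from $E_{\infty}$ ring spectra to $\bL[\sS]$ is not the monad $L_+$ of \S\ref{monad2} (that is a monad on $\sS$, left adjoint to forgetting all the way down to spectra); it is the free commutative monoid functor $M\mapsto \bigvee_{j\geq 0}M^{(j)}/\SI_j$ formed with $\sma_{\sL}$-powers --- but since fibrations and weak equivalences in $\bL[\sS]$ are created in $\sS$, transferring along either adjunction produces the same classes, so your argument is unaffected.
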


\section{The comparison with commutative 
diagram ring spectra}\label{MMSS}

For purposes of comparison and to give some completeness to this survey,
we copy the following schematic diagram of Quillen 
equivalences\footnote{There is a caveat in that $\sF\sT$ only models 
connective spectra.} from \cite{MMSS}.

{ \small 

\[ \xymatrix{
& & *+[F-:<5pt>]{\sP} \ar[ddrr]<.5ex>^{\bP} \ar[ddll]<-.5ex>_{\bP} & & & &\\
& & & & & &  \\
*+[F-:<5pt>]{\SI\sS}  \ar[0,4]<.5ex>^{\bP} 
\ar[2,4]<.5ex>^{\bP} \ar[uurr]<-.5ex>_{\bU}
 & & & & *+[F-:<5pt>]{\sI \sS} \ar[0,-4]<.5ex>^{\bU} 
\ar[2,0]<.5ex>^{\bP} \ar[uull]<.5ex>^{\bU}\\
 & & & & & &  \\ 
 *+[F-:<5pt>]{\sF\sT}  \ar[0,4]<.5ex>^{\bP}  & & & &
 *+[F-:<5pt>]{\sW\sT}  \ar[-2,0]<.5ex>^{\bU} 
\ar[0,-4]<.5ex>^{\bU} \ar[-2,-4]<.5ex>^{\bU} \\} \]

}

We have a lexicon:

\begin{enumerate}[(i)]
\item  $\sP$ is the category of $\sN$-spectra, or (coordinatized) prespectra. 
\vspace{.5mm}
\item  $\SI  \sS$ is the category of $\SI$-spectra, or symmetric spectra.
\vspace{.5mm}
\item  $\sI \sS$ is the category of $\sI$-spectra, or orthogonal spectra.
\vspace{.5mm}
\item  $\sF\sT$ is the category of $\sF$-spaces, or $\GA$-spaces.
\vspace{.5mm}
\item  $\sW\sT$ is the category of $\sW$-spaces. 
\end{enumerate}

These categories all start with some small (topological)
category $\sD$ and the category $\sD\sT$ of (continuous) covariant functors $\sD\rtarr \sT$,
which are called $\sD$-spaces.  The domain categories have inclusions 
among them, as indicated in the following diagram of domain categories $\sD$.

\[ \xymatrix{
& *+[F-:<5pt>]{\sN} \ar[dr] \ar[dl] &  \\
*+[F-:<5pt>]{\SI}  \ar[rr] \ar[drr]
[ar][dr]<.0ex> 
 & & *+[F-:<5pt>]{\sI}  \ar[d] [ar][2,0] \\
 *+[F-:<5pt>]{\sF}  \ar[rr]  & &
*+[F-:<5pt>]{\sW}\\}  \]

To go from $\sD$-spaces to $\sD$-spectra, one starts with a sphere
space functor $S\colon \sD \rtarr \sT$ with smash products.  It makes sense
to define a module over $S$, and the $S$-modules are the $\sD$-spectra.
Alternatively but equivalently, one can 
use $S$ to build a new (topological) domain category $\sD_{S}$ such that 
a $\sD_S$-space is a $\sD$-spectrum.  Either way, we obtain the category
$\sD\sS$ of $\sD$-spectra.   When $\sD=\sF$ or $\sD=\sW$, there is no distinction between $\sD$-spaces and $\sD$-spectra and $\sD\sT=\sD\sS$.

In the previous diagram, $\sN$ is the category of non-negative integers,
$\SI$ is the category of symmetric groups, $\sI$ is the category of 
linear isometric isomorphisms as before, $\sF$ is the category of finite 
based sets, which is the opposite category of Segal's category $\GA$, and 
$\sW$ is the category of based spaces that are homeomorphic to finite CW complexes.  The functors $\bU$ in the first diagram are forgetful functors associated to these inclusions of domain categories, and the functors $\bP$ are prolongation functors left adjoint to the $\bU$. All of these categories except $\sP$ are symmetric monoidal. The reason is that the functor $S\colon \sD\rtarr \sT$ is symmetric monoidal in the other cases, but not in the case of $\sN$.  The functors $\bU$ between symmetric monoidal categories are lax symmetric monoidal, the functors 
$\bP$ between symmetric monoidal categories are strong symmetric monoidal, and the functors $\bP$ and $\bU$ restrict to adjoint pairs relating the various categories of rings, commutative rings, and modules over rings. 

We are working with spaces but, except that orthogonal spectra should be
omitted, we have an analogous diagram of categories of spectra 
that are based on simplicial sets \cite{BF, HSS, Lyd1, Lyd2, Sch1}.  That
diagram compares to ours via the usual adjunction between simplicial sets
and topological spaces.  Each of these categories of spectra has intrinsic interest, and they have various advantages and disadvantages.  We focus implicitly on symmetric and orthogonal spectra in what follows; up to a 
point, $\sW$-spaces and $\sF$-spaces work similarly. Full details are in
\cite{MMSS} and the references just cited.

We emphasize that no non-trivial symmetric (or orthogonal) spectrum $E$ can 
also be an LMS spectrum.  If it were, its $0^{th}$ space $E_0$, with trivial 
$\SI_2$-action, would be homeomorphic as a $\SI_2$-space to the non-trivial
$\SI_2$-space $\OM^2 E_2$. 

We recall briefly how smash products are defined in 
diagram categories.  There are two equivalent ways. Fix a symmetric
monoidal domain category $\sD$ with product denoted $\oplus$. For
$\sD$-spaces $T$ and $T'$, there is an external smash product
$T\barwedge T'$, which is a $\sD\times \sD$-space.  It is specified by
\[ (T\barwedge T')(d,e) = Td\sma T'e. \]
Applying left Kan extension along $\oplus$, one obtains a $\sD$-space
$T \sma T'$.  This construction is characterized by an adjunction
\[  ((\sD\times\sD)\sT)(T\barwedge T', V\com \oplus)\iso \sD\sT(T\sma T',V) \]
for $\sD$-spaces $V$.  When $T$ and $T'$ are $S$-modules, one can construct a 
``tensor product'' $T\sma_S T'$ by mimicking the coequalizer description of the 
tensor product of modules over a commutative ring.  That gives the required
internal smash product of $S$-modules.  Alternatively and equivalently, one
can observe that $\sD_S$ is a symmetric monoidal category when 
$S\colon \sD\rtarr \sT$ is a symmetric monoidal functor, and one can
then apply left Kan extension directly, with $\sD$ replaced by $\sD_S$.
Either way, $\sD\sS$ becomes a symmetric monoidal category with unit $S$. 

In view of the use of left Kan extension, monoids $R$ in $\sD\sS$ have an
external equivalent defined in terms of maps $R(d)\sma R(e)\rtarr R(d\oplus e)$.
These are called $\sD$-FSP's.  As we have already recalled, Thom spectra give 
naturally occurring examples of $\sI$-FSP's.

We also recall briefly how the model structures are defined.  We begin
with the evident level model structure. Its weak equivalences and fibrations 
are defined levelwise. We then define stable weak equivalences and use them
and the cofibrations of the level model structure to construct the stable model structure.  The resulting fibrant objects are the $\OM$-spectra.  In all of these categories except that of symmetric spectra, the stable weak equivalences are the maps whose underlying maps of prespectra induce isomorphisms of stabilized homotopy groups.\footnote{The ``underlying prespectrum'' of an 
$\sF$-space is obtained by first prolonging it to a $\sW$-space and then taking the underlying prespectrum of that, and we are suppressing some details.}  
It turns out that a map $f$ of symmetric spectra is a stable weak equivalence 
if and only if $\bP f$ is a stable weak equivalence of orthogonal spectra in 
the sense just defined.  There are other model structures here, as we shall see. The thing to notice is that, in the model structures just specified, the sphere spectra $S$ are cofibrant.  Compare Remark \ref{Lew2} and Theorem \ref{QuillE}.   

These model structures are compared in \cite{MMSS}.  Later
work of Schwede and Shipley \cite{Schw, SS, Ship} gives $\SI \sS$ a 
privileged role.  Given any other sufficiently good stable model 
category whose homotopy category is correct, in the sense that it is equivalent 
to Boardman's original stable homotopy category, there is a left Quillen equivalence from $\SI \sS$  to that category.  

However, this is not always the best way to compare two 
models for the stable homotopy category.  If one has models $\sS_1$ and 
$\sS_2$ and compares both to $\SI \sS$, then $\sS_1$ and $\sS_2$ are 
compared by a zigzag of Quillen equivalences.  It is preferable to avoid 
composing left and right Quillen adjoints, since such
composites do not preserve structure.  For example, using a necessary modification 
of the model structure on $\SI\sS$ to be explained shortly, Schwede gives a left 
Quillen equivalence $\SI\sS \rtarr \sM_S$ \cite{Schw}, 
and \cite{MMSS} shows that the prolongation functor $\bP\colon \SI\sS\rtarr \sI\sS$ 
is a left Quillen equivalence.  This gives a zigzag of Quillen equivalences between 
$\sM_S$ and $\sI\sS$.   These categories are both defined using $\sI$, albeit in quite 
different ways, and it is more natural and useful to construct a left Quillen equivalence 
$\bN\colon \sI\sS\rtarr \sM_S$.  Using a similar necessary modification of the
model structure on $\sI\sS$, this is done in Mandell and May \cite[Ch. I]{MM}; 
Schwede's left Quillen equivalence is the composite $\bN\com\bP$.

In any case, there is a web of explicit Quillen equivalences relating
all good known models for the stable homotopy category, and these
equivalences even preserve the symmetric monoidal structure and so 
preserve rings, modules, and algebras \cite{MM, MMSS, Schw, Ship}.  
Thus, as long as 
one focuses on stable homotopy theory, any convenient model 
can be used, and information can easily be transferred from one to another.
More precisely, if one focuses on criteria (i) and (iii)  of Theorem \ref{Lewis}, 
one encounters no problems. However, our focus is on (ii), the relationship 
between spectra and spaces, and here there are significant problems.  

For a start, it is clear that we cannot have a symmetric monoidal
Quillen left adjoint from $\SI\sS$ or $\sI\sS$ to $\sM_S$ with the model structures
that we have specified since the sphere spectra in $\SI\sS$ and $\sI\sS$ are cofibrant 
and the sphere spectrum in $\sM_S$ is not.   For the comparison, one must 
use different model structures on $\SI\sS$ and $\sI\sS$, namely the positive stable model 
structures.  These are obtained just as above but starting with the level model structures whose 
weak equivalences and fibrations are defined using only the positive levels, not the $0^{th}$ 
space level.  This does not change the stable weak equivalences, and the resulting positive 
stable model structures are Quillen 
equivalent to the original stable model structures.  

However, the 
fibrant spectra are now the positive $\OM$-spectra, for which the structure
maps $\tilde{\si}\colon T_n\rtarr \OM T_{n+1}$ of the underlying 
prespectrum are weak equivalences only for $n>0$.  This in principle
throws away all information about the $0^{th}$ space, even after
fibrant approximation. The analogue of Theorem \ref{QuillE} reads as follows.
Actually, a significant technical improvement of the positive stable
model structure has been obtained by Shipley \cite{Ship2}, but her
improvement does not effect the discussion here: one still must use the
positive variant. 

\begin{thm}\label{QuillE2}
The categories of commutative symmetric ring spectra and commutative
orthogonal ring spectra have Quillen model structures whose weak
equivalences and fibrations are created by the forgetful functors
to the categories of symmetric spectra and orthogonal spectra with
their positive stable model structures.
\end{thm}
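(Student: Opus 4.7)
The plan is to apply the standard Kan transfer (lifting) theorem for model structures along the free commutative monoid–forgetful adjunction
\[
\mathbb{P} : \sD\sS \rightleftarrows \text{Comm}(\sD\sS) : U,
\]
where $\sD\sS$ is either $\SI\sS$ or $\sI\sS$ equipped with the positive stable model structure, and $\mathbb{P}(E) = \bigvee_{n\geq 0} E^{\sma n}/\SI_n$ is the free commutative monoid functor. The weak equivalences and fibrations of $\text{Comm}(\sD\sS)$ are by definition those maps $f$ such that $Uf$ has the corresponding property in $\sD\sS$, and the cofibrations are determined by the left lifting property. To verify that this actually defines a model structure, it suffices to check the hypotheses of the transfer principle: smallness of the domains of the generating (acyclic) cofibrations $\mathbb{P}(I)$ and $\mathbb{P}(J)$, and that every relative $\mathbb{P}(J)$-cell complex is a weak equivalence in $\sD\sS$.

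Smallness poses no difficulty, since the underlying category is topologically cofibrantly generated and $\mathbb{P}$ preserves filtered colimits. The core of the argument is the cell attachment analysis: given a pushout in $\text{Comm}(\sD\sS)$
\[
\xymatrix{ \mathbb{P}(K) \ar[r] \ar[d] & A \ar[d] \\ \mathbb{P}(L) \ar[r] & B, }
\]
with $K\rtarr L$ a generating acyclic cofibration, one has to show $A\rtarr B$ is a stable equivalence. A standard filtration argument (following the pattern established for $S$-algebras in EKMM \cite{EKMM} and adapted to diagram spectra in \cite{MMSS}) decomposes $B$ into a sequence of pushouts whose associated graded pieces are, modulo $A$-actions, of the form $A\sma (L/K)^{\sma n}/\SI_n$ smashed into $L/K$ along filtrations of the pair $(L,K)$. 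Thus the entire question reduces to the homotopical behavior of the symmetric power functors $E\mapsto E^{\sma n}/\SI_n$ on (acyclic) cofibrations in the positive stable model structure.

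The hard part, and the entire reason for passing from the stable to the positive stable model structure, is exactly this symmetric power step. In the ordinary stable model structure the sphere $S$ is cofibrant, and then $S^{\sma n}/\SI_n = S$ coincides with $E\SI_n{}_+\sma_{\SI_n} S^{\sma n}$ only up to the wrong equivalence, so the functor $(-)^{\sma n}/\SI_n$ fails to preserve stable equivalences of cofibrant objects (compare the obstruction in Theorem \ref{Lewis} and Remark \ref{Lew2}). The positive model structure is engineered precisely to remedy this: a positively cofibrant symmetric or orthogonal spectrum $E$ has $\SI_n$ acting freely on $E^{\sma n}$ in the relevant equivariant sense, so that the projection
\[
(E\SI_n)_+\sma_{\SI_n} E^{\sma n}\rtarr E^{\sma n}/\SI_n
\]
is a weak equivalence and the orbit functor preserves stable equivalences between positively cofibrant objects. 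Verifying this freeness-plus-preservation statement, both for cofibrations and for acyclic cofibrations, is where the real work lies; it is carried out in \cite{MMSS} for symmetric and orthogonal spectra (with a later streamlining in \cite{Ship2}).

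Granting the symmetric power lemma, the filtration argument shows that relative $\mathbb{P}(J)$-cell complexes are built from stable equivalences between well-behaved objects (wedge summands of the form $A\sma (\text{free }\SI_n\text{-cell})$), and transfinite composition preserves this property. The transfer theorem then produces the desired model structure on $\text{Comm}(\sD\sS)$, with $U$ creating weak equivalences and fibrations by construction. The analogous statement for orthogonal spectra proceeds identically, the only change being that the relevant equivariant freeness holds with respect to the richer indexing category $\sI$ rather than $\SI$.
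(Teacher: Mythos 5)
Your proposal is correct and follows exactly the route of the paper's cited source \cite{MMSS} (with the streamlining of \cite{Ship2}): transfer the positive stable model structure along the free commutative monoid adjunction, reduce the cell-attachment analysis by the standard filtration to the behavior of symmetric powers on positively cofibrant objects, and invoke the equivalence $(E\SI_n)_+\sma_{\SI_n}E^{(n)}\rtarr E^{(n)}/\SI_n$ of Proposition \ref{rational}, which is precisely why the positive structure is needed. The paper itself states the theorem with reference rather than proving it, so there is nothing further to compare.
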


Parenthetically, as far as I know it is unclear whether or not 
there is an analogue of this result for $\sW$-spaces.  The results of
\cite{MM, MMSS, Schw, Ship} already referred to give the following 
comparisons, provided that we use the positive model structures on the
diagram spectrum level. 

\begin{thm}  There are Quillen equivalences from the category of
commutative symmetric ring spectra to the category of commutative
orthogonal ring spectra and from the latter to the category of
commutative $S$-algebras.
\end{thm}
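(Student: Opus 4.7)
The plan is to lift the known Quillen equivalences between the underlying spectra categories to their categories of commutative monoids, using the fact that the relevant left adjoints are strong symmetric monoidal. Specifically, I would use the prolongation functor $\bP \colon \SI\sS \rtarr \sI\sS$ and its right adjoint $\bU$ from \cite{MMSS}, and the functor $\bN \colon \sI\sS \rtarr \sM_S$ with its right adjoint $\bN^{\#}$ from \cite[Ch. I]{MM}. Each pair is already a Quillen equivalence between the underlying categories equipped with their positive stable model structures (on the diagram side) and with the model structure on $\sM_S$ (on the EKMM side).

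First, I would verify that $\bP$ is strong symmetric monoidal. This follows because $\bP$ is defined by left Kan extension along the symmetric monoidal inclusion $\SI\rtarr\sI$, and left Kan extension of a strong symmetric monoidal functor along a strong symmetric monoidal functor is strong symmetric monoidal. Consequently $\bU$ is lax symmetric monoidal, so the adjunction $(\bP,\bU)$ restricts to an adjunction between the categories of commutative monoids, which by Theorem \ref{QuillE2} carry model structures created by the forgetful functors. Since these forgetful functors create fibrations and weak equivalences and $\bU$ preserves both at the underlying level, the restricted adjunction is a Quillen adjunction. The same argument handles $(\bN,\bN^{\#})$: by \cite[Ch. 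I]{MM}, $\bN$ is a strong symmetric monoidal left Quillen equivalence, so it restricts to a Quillen adjunction between commutative orthogonal ring spectra and commutative $S$-algebras.

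The hard step is to upgrade each restricted Quillen adjunction to a Quillen \emph{equivalence}. The derived unit and counit on commutative monoids must be shown to be weak equivalences on the underlying spectrum level. For this one proves that if $R$ is a cofibrant commutative monoid (in the appropriate positive stable model structure), then its underlying spectrum, while not cofibrant in the absolute sense, has the property that the derived free commutative monoid construction
\[
\mathbb{P}E = \bigvee_{j\geq 0} E^{\sma j}/\SI_j
\]
agrees on it with the corresponding symmetric power construction on the other side via the comparison map. This in turn hinges on the crucial property of the \emph{positive} stable model structure that $\SI_j$ acts freely in a homotopical sense on the $j$-fold smash powers of cofibrant objects, so that orbit spectra compute homotopy orbits. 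It is precisely this technical point that forces the use of the positive variant rather than the original stable model structure of \cite{MMSS}, since in the latter the sphere itself is cofibrant and $S^{\sma j}/\SI_j = S$ is not the homotopy orbit.

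Granting this homotopical freeness of the symmetric group actions on cofibrant commutative monoids, one checks that the underlying-spectrum Quillen equivalences $(\bP,\bU)$ and $(\bN,\bN^{\#})$ descend to Quillen equivalences on commutative monoids by a standard derived unit/counit argument: the derived unit $R\rtarr \bU\bP R$ for cofibrant commutative $R$ is a weak equivalence because it is so on underlying spectra (using the freeness of $\SI_j$-actions to identify the derived free commutative monoid on both sides), and the derived counit is handled dually. Composing the two Quillen equivalences yields the asserted comparison from commutative symmetric ring spectra through commutative orthogonal ring spectra to commutative $S$-algebras.
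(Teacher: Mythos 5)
Your outline is correct and is essentially the argument of the sources the paper itself invokes for this statement (\cite{MMSS} for lifting $\bP\colon \SI\sS\rtarr\sI\sS$ and \cite{MM} for $\bN\colon\sI\sS\rtarr\sM_S$); the paper gives no independent proof beyond that citation. In particular, your identification of the decisive technical input—the positive stable model structures of Theorem \ref{QuillE2} together with the weak equivalence $(E\SI_j)_+\sma_{\SI_j}E^{(j)}\rtarr E^{(j)}/\SI_j$ for positively cofibrant $E$—is exactly the point the paper emphasizes in Proposition \ref{rational} and the surrounding discussion.
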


Thus we have comparison functors
\[  \xymatrix{
\text{Commutative symmetric ring spectra}\ar[d]^{\bP} \\
\text{Commutative orthogonal ring spectra}\ar[d]^{\bN} \\
\text{Commutative $S$-algebras} \\
E_{\infty}\ \, \text{ring spectra} 
\ar[u]_{S\sma_{\sL}(-)} \ar[d]^{\OM^{\infty}}\\
E_{\infty}\ \, \text{ring spaces}.\\} \]
The functors $\bP$, $\bN$, and $S\sma_{\sL}(-)$ are {\em left}\, 
Quillen equivalences.  The functor $\OM^{\infty}$ is a
{\em right}\, adjoint.  The composite is not homotopically 
meaningful since, after fibrant approximation, commutative symmetric
ring spectra do not have meaningful $0^{th}$ spaces; in fact, their
$0^{th}$ spaces are then just $S^0$.  If one only uses diagram spectra,
the original $E_{\infty}$ ring theory relating spaces and spectra
is lost.

\section{Naive $E_{\infty}$ ring spectra}\label{naive}

Again recall that we can define operads and operad actions in any symmetric 
monoidal category.  If a symmetric monoidal category $\sW$ is tensored over a
symmetric monoidal category $\sV$, then we can just as well define actions
of operads in $\sV$ on objects of $\sW$.  All good modern categories of 
spectra are tensored over based spaces (or simplicial sets).  We can
therefore define an action of an operad $\sO_+$ in $\sT$ on a spectrum in any
such category.  Continuing to write $\sma$ for the tensor of a space and a spectrum, 
such an action on a spectrum $R$ is given by maps of spectra
\[  \sO(j)_+ \sma R^{(j)}\rtarr R. \]  
Taking $\sO$ to be an $E_{\infty}$ operad, we call such $\sO$-spectra
naive $E_{\infty}$ ring spectra.\footnote{In recent e-mails, Tyler Lawson 
has jokingly called these MIT $E_{\infty}$ ring spectra, to contrast them
with the original Chicago variety.  He galerted me to the fact that some
people working in the area may be unaware of or indifferent to the distinction.}
They are defined in terms of the already constructed internal smash product
and thus have nothing to do with the internalization of an external smash product
that is intrinsic to the original definition of $E_{\infty}$ ring spectra.

They are of interest because some natural constructions land in naive
$E_{\infty}$ spectra (of one kind or another).  In some cases, such as 
$\sW$-spaces, where we do not know of a model structure on commutative ring spectra, 
naive $E_{\infty}$ ring spectra provide an adequate stopgap.  In other cases, 
including symmetric spectra, orthogonal spectra, and $S$-modules, we
can convert naive $E_{\infty}$ ring spectra to equivalent commutative
ring spectra, as we noted without proof in \cite[0.14]{MMSS}. The
reason is the following result, which deserves considerable emphasis.
See \cite[III.5.5]{EKMM}, \cite[15.5]{MMSS}, and, more recently and 
efficiently, \cite[3.3]{Ship2} for the proof.

\begin{prop}\label{rational} For a positive cofibrant symmetric or 
orthogonal spectrum or for a cofibrant $S$-module $E$, the projection
\[\pi\colon (E\SI_j)_+\sma_{\SI_j} E^{(j)} \rtarr E^{(j)}/\SI_j  \]
(induced by $E\SI_j\rtarr *$) is a weak equivalence.
\end{prop}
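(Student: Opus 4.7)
The plan is to reduce the assertion, in each of the three cases, to a verification on generating cofibrant cells of the corresponding model structure. The slogan is that the positive model structure on symmetric or orthogonal spectra, and the standard model structure on $\sM_S$, have been rigged precisely so that $\SI_j$ acts freely on the $j$-fold smash powers of cells off the basepoint, and that freeness is exactly what is needed for the natural projection $\pi\colon (E\SI_j)_+ \sma_{\SI_j} X \rtarr X/\SI_j$ to be a weak equivalence when $X = E^{(j)}$.

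First I would note that both functors $F_j E = (E\SI_j)_+ \sma_{\SI_j} E^{(j)}$ and $G_j E = E^{(j)}/\SI_j$ preserve filtered colimits and behave well with respect to the pushouts that build up cell complexes, and that $\pi$ is natural between them. Attaching a cofibration $K \hookrightarrow L$ to a cofibrant $E$ produces a pushout whose $j$-fold smash power admits a $\SI_j$-equivariant filtration indexed by how many of the $j$ smash factors lie in $L$ rather than $E$. Comparing $\pi$ along successive filtration quotients and applying the five-lemma inductively reduces everything to the case where $E$ is a single generating cell.

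For symmetric and orthogonal spectra with the positive stable model structure, the generating cofibrations have the form $F_n i$ with $n\geq 1$, and a direct calculation shows $(F_n K)^{(j)}\iso F_{nj}(G_+ \sma_H K^{(j)})$, where $G$ is $\SI_{nj}$ or $O(nj)$ and $H = \SI_n \wr \SI_j$ is the wreath-product subgroup acting by block permutations. The point is that, when $n\geq 1$, the outer permutation subgroup $\SI_j \subset G$ meets $H$ only in the identity, so $\SI_j$ acts freely on $G/H$ off the basepoint, and $\pi$ is then a weak equivalence for such cells by the standard fact that $(E\SI_j)_+ \sma_{\SI_j} X \rtarr X/\SI_j$ is one whenever $\SI_j$ acts freely on the non-basepoint part of $X$. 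Positivity is essential: for $n=0$ we would have $(F_0 K)^{(j)} = \SI^{\infty} K^{(j)}$ with trivial $\SI_j$-action on the loop coordinates, and $\pi$ would typically fail to be an equivalence. For cofibrant $S$-modules, the $j$-fold smash power $E^{(j)}$ unfolds, via the definition of $\sma_S$ and the unit equivalence $\la$, into a construction built from the twisted half-smash product $\sL(j) \thp_{\SI_j} \ell E^{\barwedge j}$; since $\sL(j)$ is a free contractible $\SI_j$-space, this already computes the genuine homotopy $\SI_j$-orbits on the external smash power, and checking $\pi$ on the generating cells of $\sM_S$ of the form $S \sma_{\sL} F_V i$ reduces to that single observation.

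The main obstacle will be the cell-induction bookkeeping. Because $F_j$ and $G_j$ are $j$-homogeneous in $E$, attaching a single cell to $E$ produces, after $j$-fold smash powering, an intricate filtered pushout whose successive quotients carry mixed $\SI_j$-actions on old and new pieces. The substantive work is to verify that at each filtration stage the $\SI_j$-action is free away from the basepoint, so that the five lemma applies to $\pi$ at every stage. Once that combinatorial step is in place, the proposition follows by induction and passage to colimits, and the whole argument is really just an elaboration of the slogan that the positive (respectively $S$-module) cofibrancy conditions were designed to make homotopy orbits agree with strict orbits on smash powers.
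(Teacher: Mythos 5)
Two remarks, one about provenance and one about a flaw in your key step. On provenance: the paper itself gives no proof of this proposition; it points to \cite[III.5.5]{EKMM}, \cite[15.5]{MMSS}, and \cite[3.3]{Ship2}, and your overall architecture (cell induction over the positive generating cells, filtration of smash powers of pushouts, and, for $S$-modules, the observation that $\sma_S$-powers of cell $S$-modules are built from $\sL(j)\thp_{\sL(1)^j}E^{\barwedge j}$ with $\sL(j)$ a free contractible $\SI_j$-space) is exactly the strategy of those proofs, so the plan is sound.

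However, the pivotal computation you give for diagram spectra is wrong as stated, in two ways. First, the identification should be $(F_nK)^{(j)}\iso F_{nj}(K^{(j)})$, not $F_{nj}(\SI_{nj+}\sma_{\SI_n\wr\SI_j}K^{(j)})$; your formula (with $F_{nj}$ replaced by its ``semifree'' analogue) describes the orbit spectrum $(F_nK)^{(j)}/\SI_j$ rather than the smash power itself. Second, the block-permutation copy of $\SI_j$ inside $\SI_{nj}$ is \emph{contained} in the wreath product $\SI_n\wr\SI_j$, so it is false that it meets that subgroup only in the identity, and $\SI_j$ does not act freely on $\SI_{nj}/(\SI_n\wr\SI_j)$; an argument resting on that claim fails. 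The freeness you actually need is levelwise: at level $p$ one has $(F_{nj}K^{(j)})_p\iso \SI_{p+}\sma_{\SI_{p-nj}}(K^{(j)}\sma S^{p-nj})$, and $\si\in\SI_j$ acts through the block permutation $\si_n\in\SI_{nj}\subset\SI_p$ on the $\SI_p$-coordinate together with the permutation of the factors of $K^{(j)}$; a fixed point off the basepoint would force $\si_n$ to lie in $\SI_{p-nj}$, which is impossible for $\si\neq e$ precisely because $n\geq 1$ (and is exactly what fails at $n=0$, where $\si_n=e$). With the freeness statement corrected in this form (and its orthogonal analogue using $O(p)$ and $O(p-nj)$), your induction and colimit bookkeeping do go through, and the result is the argument of \cite[15.5]{MMSS} and \cite[3.3]{Ship2}; the $S$-module case you sketch via $\sL(j)$ is that of \cite[III.5.5]{EKMM}.
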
  

This is analogous to something that is only true in characteristic
zero in the setting of differential graded modules over a field.   
In that context, it implies that $E_{\infty}$ DGA's can be 
approximated functorially by quasi-isomorphic commutative 
DGA's \cite[II.1.5]{KM}.  The following result is precisely
analogous to the cited result and can be proven in much the 
same way.  That way, it is another exercise in the use of the two-sided 
monadic bar construction.  The cofibrancy issues can be handled 
with the methods of \cite{Ship}, and I have no doubt that in all cases
the following result can be upgraded to a Quillen equivalence. The more 
general result \cite[1.4]{EM} shows this to be true for
simplicial symmetric spectra and suitable simplicial operads, so 
I will be purposefully vague and leave details to the interested reader.  
Let $\sO$ be an $E_{\infty}$ operad and work in one of the categories 
of spectra cited in Proposition \ref{rational}.  

\begin{prop}\label{rational2} There is a functor that assigns a weakly 
equivalent commutative ring spectrum to a (suitably cofibrant) naive 
$\sO$-spectrum. The homotopy categories of naive $\sO$-spectra and 
commutative ring spectra are equivalent.
\end{prop}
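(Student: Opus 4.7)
My plan is to exhibit the functor by means of the two-sided monadic bar construction, mirroring the proof of the analogous characteristic zero result for $E_{\infty}$ DGA's in \cite{KM}. Write $\sN$ for the commutativity operad ($\sN(j)=\ast$) and $N_+$ for the associated monad on spectra, so that $N_+ E = \bigvee_{j\geq 0} E^{(j)}/\SI_j$ and commutative ring spectra coincide with $N_+$-algebras; naive $\sO$-spectra are $O_+$-algebras where $O_+ E = \bigvee_{j\geq 0} \sO(j)_+\sma_{\SI_j} E^{(j)}$. The unique map of operads $\pi\colon \sO\rtarr \sN$ induces a map of monads $\pi\colon O_+\rtarr N_+$ whose $j^{th}$ wedge summand is precisely the projection $\sO(j)_+\sma_{\SI_j}E^{(j)}\rtarr E^{(j)}/\SI_j$ appearing in Proposition \ref{rational}, with the contractible free $\SI_j$-space $\sO(j)$ playing the role of $E\SI_j$.

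Given a naive $\sO$-spectrum $(R,\xi)$, define $\bar R = B(N_+, O_+, R)$, the geometric realization of the simplicial spectrum with $B_q(N_+, O_+, R) = N_+ O_+^q R$ and the evident faces and degeneracies built from $\mu_{N_+}$, $\pi$, $\mu_{O_+}$, $\xi$, and $\et_{O_+}$. The left factor $N_+$ makes each $B_q$ a free commutative ring spectrum, and under appropriate cofibrancy hypotheses realization preserves this structure, giving $\bar R$ a natural structure of commutative ring spectrum. The comparison proceeds via the natural zigzag of naive $\sO$-spectra
\[ \xymatrix@1{
R & B(O_+, O_+, R) \ar[l]_-{\epz} \ar[rr]^-{B(\pi,\id,\id)}
& & B(N_+, O_+, R) = \bar R.\\} \]
The left map $\epz$ is a homotopy equivalence via the standard extra-degeneracy argument, exactly as reviewed in \S\ref{phil}.

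For the right map, by Step \ref{step2} it suffices to prove that each $B_q(\pi,\id,\id)\colon O_+^{q+1} R\rtarr N_+ O_+^q R$ is a weak equivalence. But this is just $\pi$ applied to $O_+^q R$, which decomposes as a wedge over $j$ of the projections $\sO(j)_+\sma_{\SI_j}(O_+^q R)^{(j)}\rtarr (O_+^q R)^{(j)}/\SI_j$; each is a weak equivalence by Proposition \ref{rational}, provided $O_+^q R$ is positive cofibrant in $\SI\sS$ or $\sI\sS$, or cofibrant in $\sM_S$. The main obstacle is precisely this cofibrancy bookkeeping: one first replaces $R$ by a cofibrant approximation in the relevant model structure, then verifies that the monad $O_+$ preserves the requisite cofibrancy, along the lines of the arguments in \cite{Ship, Ship2}. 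One must also ensure Reedy cofibrancy of the simplicial spectrum $B_*(N_+,O_+,R)$ so that realization converts the levelwise weak equivalences into weak equivalences of spectra.

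For the equivalence of homotopy categories, observe that any commutative ring spectrum $R'$ is a naive $\sO$-spectrum by pullback along $\pi$. Applied to such an $R'$, all three terms of the zigzag are naturally commutative ring spectra and the two displayed maps are maps of commutative ring spectra; composing with the augmentation $B(N_+, N_+, R')\xrightarrow{\epz} R'$ and the natural map $B(N_+,O_+, R')\rtarr B(N_+, N_+, R')$ (which is again a levelwise weak equivalence by the same Proposition \ref{rational} argument) provides the homotopy inverse functor. The resulting natural zigzags of weak equivalences descend to the homotopy category to give the asserted adjoint equivalence between naive $\sO$-spectra and commutative ring spectra. As indicated in the excerpt, the full Quillen equivalence can presumably be obtained by upgrading this outline with the model theoretic techniques of \cite{Ship, EM}; that upgrade is where I expect the technical work to be concentrated.
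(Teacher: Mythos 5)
Your proposal is correct and takes essentially the approach the paper intends: the paper deliberately leaves details to the reader, saying only that the result is another exercise in the two-sided monadic bar construction analogous to \cite[II.1.5]{KM} with cofibrancy handled by the methods of \cite{Ship}, and your zigzag $R \leftarrow B(O_+,O_+,R)\rightarrow B(N_+,O_+,R)$, with the levelwise weak equivalences supplied by Proposition \ref{rational} and the comparison for commutative ring spectra via pullback along $\pi$, is exactly that argument. (Only note that the statement asserts an equivalence of homotopy categories rather than an adjoint equivalence, and that your appeal to Proposition \ref{rational} for $B(N_+,O_+,R')\rightarrow B(N_+,N_+,R')$ also implicitly uses that symmetric powers preserve weak equivalences between suitably cofibrant spectra, which is part of the same circle of cofibrancy bookkeeping you acknowledge.)
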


It is immediately clear from this result and the discussion in the 
previous section that naive $E_{\infty}$ ring spectra in $\SI\sS$ and
$\sI\sS$ have nothing to do with $E_{\infty}$ ring spaces, whereas 
the $0^{th}$ spaces of naive $E_{\infty}$ ring spectra in $\sM_S$ are
weakly equivalent to $E_{\infty}$ ring spaces.

\section{Appendix A. Monadicity of functors and comparisons of monads}

Change of monad results are well-known to category theorists, but perhaps
not as readily accessible in the categorical literature as they might be, 
so we give some elementary details here.\footnote{This appendix is written
jointly with Michael Shulman.}  We first make precise two notions of a 
map relating monads $(C,\mu,\io)$ and $(D,\nu,\ze)$ in different 
categories $\sV$ and $\sW$.  We have used both,
relying on context to determine which one is intended.  

\begin{defn}\label{moniso} Let $(C,\mu,\io)$ and $(D,\nu,\ze)$ be monads on 
categories $\sV$ and $\sW$.  
An op-lax map $(F,\al)$ from $C$ to $D$ is a functor $F\colon \sV\rtarr \sW$
and a natural transformation $\al\colon FC\rtarr DF$ such that the following
diagrams commute.
\[ \xymatrix{
FCC \ar[d]_{F\mu} \ar[r]^-{\al C} & DFC \ar[r]^-{ D\al} & DDF \ar[d]^{\nu F}\\
FC \ar[rr]_-{\al} & & DF \\ } 
\ \  \ \text{and} \ \ \ 
 \xymatrix{
& F \ar[dl]_{F\io} \ar[dr]^{\ze F} & \\
FC \ar[rr]_-{\al} & & DF \\  } \]
A lax map $(F,\be)$ from $C$ to $D$ is a functor $F\colon \sV\rtarr \sW$
and a natural transformation $\be\colon DF\rtarr FC$ such that the following
diagrams commute.
\[ \xymatrix{
DDF \ar[d]_{\nu F} \ar[r]^-{D \be} & DFC \ar[r]^-{\be D} & FCC \ar[d]^{F\mu}\\
DF \ar[rr]_-{\be} & & FC \\ } 
\ \  \ \text{and} \ \ \ 
 \xymatrix{
& F \ar[dl]_{\ze F} \ar[dr]^{F\io } & \\
DF \ar[rr]_-{\be} & & FC \\  } \]
If $\al\colon FC\rtarr DF$ is a natural isomorphism, then $(F,\al)$ is an 
op-lax map $C\rtarr D$ if and only if $(F,\al^{-1})$ is a lax map $D\rtarr C$.  When this holds, we say that $\al$ and $\al^{-1}$ are monadic natural isomorphisms.
\end{defn}

These notions are most familiar when $\sV=\sW$ and $F = \text{Id}$.  
In this case, a lax map $D\rtarr C$ coincides with an op-lax map 
$C\rtarr D$, and this is the usual notion of a map of monads from
$C$ to $D$. The map $\al\colon C\rtarr Q$ used in the approximation 
theorem and the recognition principle is an example. As we have used extensively, maps $(\text{Id},\al)$ lead to pullback of action functors.

\begin{lem} If $(\text{Id},\al)$ is a map of monads $C\rtarr D$ on
a category $\sV$, then a left or right action of $D$ on a functor induces 
a left or right action of $C$ by pullback of the action along $\al$.
In particular, if $(Y,\ch)$ is a $D$-algebra in $\sV$, then $(Y,\ch\com \al)$ 
is a $C$-algebra in $\sV$. 
\end{lem}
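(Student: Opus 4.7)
The plan is to write the pullback action explicitly and then verify the two algebra axioms by short diagram chases that invoke precisely the two axioms in the definition of a map of monads. For a $D$-algebra $(Y,\ch)$, the putative $C$-action $\xi\colon CY\rtarr Y$ is the composite $\ch\com\al_Y$; more generally, for a right $D$-action $\rh\colon \PS D\rtarr \PS$ on a functor $\PS\colon\sV\rtarr\sW$ the pulled back right $C$-action is $\rh\com\PS\al$, and for a left $D$-action $\la\colon D\PH\rtarr \PH$ on $\PH\colon\sU\rtarr\sV$ the pulled back left $C$-action is $\la\com\al\PH$. Since a $D$-algebra $(Y,\ch)$ is the special case of a left $D$-action on the functor $Y\colon \ast\rtarr\sV$, it suffices to treat the two functor cases, and they are completely symmetric; I will spell out the left action case.

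The unit axiom $\la\com\al\PH\com\io\PH = \id_{\PH}$ follows at once: the unit triangle for the monad map $\al$ gives $\al\com\io = \ze$, and then the unit axiom of the original $D$-action gives $\la\com\ze\PH = \id_{\PH}$. For associativity, we must compare the two composites $CC\PH\rightrightarrows\PH$, namely $\la\com\al\PH\com C(\la\com\al\PH)$ and $\la\com\al\PH\com\mu\PH$. By naturality of $\al$ applied to the morphism $\la\com\al\PH$, the first equals $\la\com D(\la\com\al\PH)\com\al(C\PH)$, which expands to $\la\com D\la\com D\al\PH\com\al(C\PH)$. By the associativity square in the definition of $\al$, the second equals $\la\com\nu\PH\com D\al\PH\com\al(C\PH)$. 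The two expressions agree by the associativity axiom $\la\com\nu\PH = \la\com D\la$ of the left $D$-action on $\PH$.

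There is no genuine obstacle: the axioms defining a map of monads were chosen precisely so that pullback of actions is well-defined, and the lemma is their immediate shadow. The only bookkeeping is keeping the naturality squares for $\al$ separate from the axiom diagrams of the monad map and of the $D$-action, but once the composite $\la\com\al\PH$ is written down, every step is forced.
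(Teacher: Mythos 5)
Your verification is correct, and it is exactly the routine diagram chase the paper leaves implicit (the lemma is stated without proof in Appendix A, its verification being regarded as straightforward): unit triangle of the monad map plus unit of the $D$-action give the unit axiom, and naturality of $\al$ together with the associativity square for $\al$ and associativity of the $D$-action give the associativity axiom, with the algebra case recovered as a left action on a functor $\ast\rtarr\sV$ just as the paper itself notes in \S8. Nothing is missing.
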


We have also used pushforward actions when $\sV$ and $\sW$ vary, and for 
that we need lax maps.

\begin{lem} If $(F,\be)$ is a lax map from a monad $C$ on $\sV$ to a monad $D$ on $\sW$, then a left or right action of $C$ on a functor induces 
a left or right action of $D$ by pushforward of the action along $(F,\be)$. 
In particular, if $(X,\xi)$ is a $C$-algebra in $\sV$, then 
$(FX,F\xi\com \be)$ is a $D$-algebra in $\sW$.
\end{lem}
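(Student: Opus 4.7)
The plan is to verify the two monad algebra axioms directly for the proposed pushforward structure. Given a $C$-algebra $(X,\xi)$ in $\sV$, take as putative $D$-action on $FX$ the composite $\ch = F\xi\com\be_X\colon DFX \rtarr FCX \rtarr FX$, as asserted. Two axioms need checking: the unit condition $\ch\com\ze_{FX} = \id_{FX}$ and the associativity condition $\ch\com D\ch = \ch\com\nu_{FX}$.

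The unit axiom is essentially immediate. The unit triangle in Definition \ref{moniso} gives $\be_X\com\ze_{FX} = F\io_X$. Postcomposing with $F\xi$ and using the unit axiom $\xi\com\io_X = \id_X$ of the $C$-algebra yields $F\xi\com F\io_X = F(\id_X) = \id_{FX}$. For associativity, the plan is a three-step diagram chase combining naturality of $\be$, the pentagon axiom for the lax map, and associativity of $\xi$. Expanding definitions, $\ch\com D\ch = F\xi\com\be_X\com DF\xi\com D\be_X$. First, naturality of $\be\colon DF\Rightarrow FC$ applied to the morphism $\xi\colon CX\rtarr X$ rewrites $\be_X\com DF\xi$ as $FC\xi\com\be_{CX}$. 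Next, the $C$-algebra associativity $\xi\com C\xi = \xi\com\mu_X$ lets one replace $F\xi\com FC\xi$ by $F\xi\com F\mu_X$. Finally, the lax-map pentagon $F\mu\com\be C\com D\be = \be\com\nu F$, evaluated at $X$, collapses $F\mu_X\com\be_{CX}\com D\be_X$ into $\be_X\com\nu_{FX}$. Threading the three substitutions together produces $\ch\com D\ch = F\xi\com\be_X\com\nu_{FX} = \ch\com\nu_{FX}$, as required.

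The generalization from algebras to left or right actions on a functor $\PH$ is formal, with no new ideas needed: such an action is a natural transformation satisfying the same two axioms, the pushforward is defined by the same formula $F\la\com\be_{\PH(-)}$ componentwise, and the verification reduces objectwise to the argument above. The only point at which a reader might stumble is keeping the lax/op-lax variance straight, but the direction $\be\colon DF\rtarr FC$ is exactly what is needed so that $\be_X$ precedes $F\xi$ in the definition of $\ch$; the preceding lemma uses the opposite variance in the degenerate case $\sV = \sW$ and $F = \id$, which recovers the standard pullback of actions along a map of monads. I expect no serious obstacle: the proof is a compact exercise in pasting the axioms of Definition \ref{moniso} against the axioms of a $C$-algebra, and all the required equalities are built directly into those axioms.
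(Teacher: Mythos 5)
Your verification is correct, and it is exactly the argument the paper intends: the lemma is stated there without proof as a routine consequence of pasting the lax-map diagrams of Definition \ref{moniso} (whose middle arrow $\be D$ is a typo for $\be C$, as you implicitly use) against the unit and associativity axioms of the $C$-algebra, which is precisely the chase you carry out, with the unit triangle giving the unit axiom and naturality of $\be$ plus the pentagon plus associativity of $\xi$ giving the multiplication axiom. The extension to actions on functors by whiskering is likewise the intended formality, so there is nothing to add.
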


Now let $F$ have a right adjoint $U$. Let $\et\colon \text{Id}\rtarr UF$ and 
$\epz\colon FU\rtarr \text{Id}$ be the unit and counit of the adjunction.  
We have encountered several examples of monadic natural isomorphisms $(F,\be)$ 
relating a monad $C$ in $\sV$ to a monad $D$ in $\sW$, where $F$ has a left
adjoint $U$.  Thus $\be$ is a natural isomorphism $DF\rtarr FC$.  In this 
situation, we have a natural map $\de\colon CU\rtarr UD$, namely the composite
\[ \xymatrix@1{
CU\ar[r]^-{\et CU} & UFCU \ar[r]^-{U\be^{-1}U} & UDFU \ar[r]^{UD\epz} & UD.\\ }
\]
It is usually not an isomorphism, and in particular is not an isomorphism
in our examples.
Implicitly or explicitly, we have several times used the following result.

\begin{prop}\label{Appcute} The pair $(U,\de)$ is a lax map from 
the monad $D$ in 
$\sW$ to the monad $C$ in $\sV$.  Via pushforward along $(F,\be)$ and 
$(U,\de)$, the adjoint pair $(F,U)$ induces an adjoint pair of functors
between the categories $C[\sV]$ and $D[\sW]$ of $C$-algebras in $\sV$ and 
$D$-algebras in $\sW$:
\[  D[\sW](FX, Y)\iso C[\sV](X,UY). \]
\end{prop}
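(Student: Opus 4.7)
The plan is to verify the two claims of the proposition in sequence. For the first claim that $(U,\de)$ is a lax map from $D$ to $C$, I observe that $\de$ is precisely the mate of $\be^{-1}\colon FC\rtarr DF$ under the adjunction $F\dashv U$; this already strongly suggests why it works, since the formation of mates is known in general $2$-categorical algebra to preserve monadic structure. Concretely, I will check the two axioms of Definition \ref{moniso} by direct diagram chase. The unit axiom requires showing $\de\com \io U = U\ze$; substituting the formula for $\de$, sliding $\io U$ past $\et$ via naturality of $\et$, then applying $U$ to the unit equation $\be\com \ze F = F\io$ for $(F,\be)$ to replace $U\be^{-1}\com UF\io$ by $U\ze F$, and finally invoking naturality of $\ze$ together with the triangle identity $U\epz\com \et U = \id_U$, collapses the composite to $U\ze$.

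For the associativity axiom $U\nu\com \de D \com C\de = \de\com \mu U$, I expand both sides using the defining formula $\de = UD\epz \com U\be^{-1}U \com \et CU$ and reduce to the associativity axiom for $(F,\be)$, namely $F\mu\com \be C\com D\be = \be\com \nu F$, equivalently $D\be^{-1}\com \be^{-1}C\com F\mu = \nu F\com \be^{-1}$. The key step is to interleave the two occurrences of $\be^{-1}$ that appear on the left-hand side, using naturality of $\be^{-1}$ and of $\epz$ to move an inner $\epz$ past an outer $\be^{-1}$ (or equivalently to cancel an $\et$--$\epz$ pair via the other triangle identity $\epz F\com F\et = \id_F$). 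After these manipulations both sides reduce to the same composite, with $U$ applied to the associativity equation for $(F,\be)$ supplying the core of the identification.

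For the second claim, the pushforward functors $F_*\colon C[\sV]\rtarr D[\sW]$ and $U_*\colon D[\sW]\rtarr C[\sV]$ are provided by the two lemmas preceding the proposition: $F_*(X,\xi) = (FX, F\xi\com \be)$ and $U_*(Y,\ch) = (UY, U\ch\com \de)$. To establish the adjunction, I take a $D$-algebra map $f\colon FX\rtarr Y$ and show that its ordinary adjoint $\tilde{f} = Uf\com \et_X\colon X\rtarr UY$ is a map of $C$-algebras, the reverse direction being symmetric. The required identity $\tilde{f}\com \xi = U\ch\com \de\com C\tilde{f}$ is proved by expanding $\de$, applying naturality of $\et$ to pull $C\tilde f = CUf\com C\et_X$ through the composite, using the hypothesis $f\com F\xi\com \be = \ch\com Df$ that $f$ respects the $D$-actions, and finally applying the triangle identity $U\epz\com \et U = \id_U$ to cancel an $\et$--$\epz$ pair. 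The resulting bijection on hom-sets is natural because the underlying adjunction is natural and the compatibility conditions with the algebra structures have just been verified.

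The main obstacle is the associativity diagram for $(U,\de)$, which expands to a composite of six natural transformations containing two nested occurrences of $\be^{-1}$ that must be reorganized past each other. This is purely formal bookkeeping with the monad and adjunction axioms, but it is the place where one most wants the conceptual viewpoint: the whole proposition is an instance of the general principle that an adjunction in a base $2$-category, equipped with a monadic isomorphism between lifts of monads, lifts to an adjunction in the $2$-category of monads and lax morphisms, with the right adjoint's structure $2$-cell given by the mate of the left adjoint's inverse structure $2$-cell.
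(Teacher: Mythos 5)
Your proposal is correct and follows essentially the same route as the paper, whose proof is itself just the observation that everything is a straightforward diagram chase with the adjunction triangle identities. The only cosmetic difference is that the paper organizes the adjunction step by noting that $\et\colon X\rtarr UFX$ is a map of $C$-algebras and $\epz\colon FUY\rtarr Y$ is a map of $D$-algebras (the bijection then restricting by functoriality of the pushforwards), whereas you chase the adjunct $Uf\com\et_X$ directly; these are the same computation, and your mate/doctrinal-adjunction framing is a fine conceptual gloss on it.
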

\begin{proof}[Sketch proof] The arguments are straightforward
diagram chases. The essential point is that, for a $C$-algebra $(X,\xi)$
and a $D$-algebra $(Y,\ch)$, the map $\et\colon X\rtarr UFX$ is a map
of $C$-algebras and the map $\epz\colon FUY\rtarr Y$ is a map of 
$D$-algebras.  
\end{proof}

These observations are closely related to the categorical study of 
monadicity.  A functor $U\colon \sW\rtarr \sV$ is said to be 
{\em monadic} if it has a left adjoint $F$ such that $U$ induces
an equivalence from $\sV$ to the category of algebras over the 
monad $UF$.  This is a property of the functor $U$.  If $U$ is monadic,
then its left adjoint $F$ and the induced monad $UF$ such that $\sV$ is equivalent to the category of $UF$-algebras are uniquely determined by 
$\sV$, $\sW$, and the functor $U$.

This discussion illuminates the comparisons of monads in \S\ref{monad}
and \S\ref{monad2}.  For the first it is helpful to consider the following 
diagram of forgetful functors. 
\[\xymatrix{\sT_e\ar[d] &
  \sO\text{-spaces with zero} \ar[l]\ar[dl] \\
  \sT \ar[d] & \sO\text{-spaces}\ar[dl] \ar[l]\\
  \sU}
\]
Note that the operadic unit point is $1$ in $\sO\text{-spaces with zero}$
but $0$ in $\sO$-spaces; the diagonal arrows are obtained by 
forgetting the respective operadic unit points.  These forgetful functors are all monadic.  
If we abuse
notation by using the same name for each left adjoint and for the monad
induced by the corresponding adjunction, then we have the following diagram of left adjoints.
\[\xymatrix{\sT_e \ar[r]^<>(.5){O^\sT} &
  \sO\text{-spaces with zero} \\
  \sT \ar[u]^{S^0\vee -} \ar[r]_<>(.5){O^\sU} \ar[ur]^<>(.3){O^\sT_+} &
  \sO\text{-spaces} \\
  \sU \ar[u]^{(-)_+} \ar[ur]_{O^\sU_+}}
\]
Since the original diagram of forgetful functors obviously commutes,
so does the corresponding diagram of left adjoints. This formally implies 
the relations
\[ O^\sU(X_+) \iso O_+^\sU(X) \ \ \ \text{and}\ \ \ 
  O^\sT(S^0\vee X) \iso O^\sT_+(X) \]
of (\ref{Omore1}) and (\ref{Omore2}). 
The explicit descriptions of the four monads $O^\sU_+$, $O^\sU$,
$O^\sT_+$, and $O^\sT$ are, of course, necessary to the applications,
but it is helpful conceptually to remember that their definitions are
forced on us by knowledge of the corresponding forgetful functors.
As an incidental point, it is also important to remember that, unlike 
the case of adjunctions, the composite of two monadic functors need 
not be monadic, although it is in many examples, such as those above.

Similarly, for \S\ref{monad2}, it is helpful to consider
two commutative diagrams of forgetful functors. In both, 
all functors other than the $\OM^{\infty}$ are monadic. 
The first is 
\[\xymatrix{
\sS_e\ar[d] & \sL\text{-spectra under}\  S \ar[l] \ar[d]^{\OM^{\infty}} \ar[dl]\\
\sS \ar[d]_{\OM^{\infty}} &  \sL\text{-spaces with zero}\ar[dl]\\
\sT. & \\ }
\]
The lower diagonal arrow forgets the action of $\sL$ and remembers
the basepoint $0$.  The corresponding diagram of left adjoints is
\[\xymatrix{
\sS_e  \ar[r]^-{L} & \sL\text{-spectra under}\  S \\
\sS \ar[u]^{S\wed -} \ar[ru]^(0.3){L_+} 
& \sL\text{-spaces with zero}\ar[u]_{\SI^{\infty}}\\
\sT \ar[u]^{\SI^{\infty}} \ar[ur]_{L_+}. &  \\  }
\]
Its commutativity implies the relations
\[ L(S\wed E) \iso L_+(E) \ \ \ \text{and}\ \ \ 
 L_+\SI^{\infty}X\iso \SI^{\infty}L_+ X \]
of (\ref{Omore3}) and (\ref{newmon}).  The second diagram of forgetful 
functors is
\[\xymatrix{
\sS_e\ar[d]_{\OM^{\infty}} & \sL\text{-spectra under}\  S \ar[l] \ar[d]^{\OM^{\infty}}\\
\sT_e \ar[d] &  \sL\text{-spaces with zero}\ar[d]\\
\sT_1 &  \sL\text{-spaces}. \ar[l] \\ }
\]
Here $\sT_1$ denotes the category of based spaces with basepoint $1$.
The lower two vertical arrows forget the basepoint $0$ and remember 
the operadic unit $1$ as basepoint.
The corresponding diagram of left adjoints is
\[\xymatrix{
\sS_e \ar[r]^-{L} & \sL\text{-spectra under}\  S \\
\sT_e \ar[u]^{\SI^{\infty}} &  \sL\text{-spaces with zero}\ar[u]_{\SI^{\infty}}\\
\sT_1 \ar[u]^{(-)_+} \ar[r]_-{L} &  \sL\text{-spaces}. \ar[u]_{(-)_+} \\ }
\]
This implies the relation $L\SI^{\infty}(X_+) \iso \SI^{\infty} (LX)_+$ of (\ref{oldmon}),
which came as a computational surprise when it was first discovered.

\section{Appendix B. Loop spaces of $E_{\infty}$ spaces and the recognition principle}

Let $X$ be an $\sO$-space, where $\sO$ is an $E_{\infty}$ operad.  Either replacing $X$ by
an equivalent $\sC$-space or using the additive infinite loop space machine on 
$\sC\times \sO$-spaces, we construct a spectrum $\bE X$ as in Theorem \ref{add}.  For definiteness, 
we use notations corresponding to the first choice.  As promised in Remark \ref{OME}, we shall 
reprove the following result.  The proof will give more precise information than the statement,
and we will recall a consequence that will be relevant to our discussion of orientation theory in
the second sequel \cite{Sequel2} after giving the proof.

\begin{thm}\label{OME2} The space $\OM X$ is an $\sO$-space and there is a natural map of
spectra $\om\colon \SI \bE \OM X\rtarr \bE X$ that is a weak equivalence if $X$ is connected. 
Therefore its adjoint $\tilde{\om}\colon \bE\OM X\rtarr \OM\bE X$ is also a weak
equivalence when $X$ is connected.
\end{thm}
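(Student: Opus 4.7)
First I would note that $\OM X = F(S^1, X)$ inherits an $\sO$-action since $F(S^1,-)$ preserves finite products: applying $\OM$ to the action maps $\sO(j) \times X^j \to X$ yields $\sO(j) \times (\OM X)^j \iso \sO(j) \times \OM(X^j) \to \OM X$, with equivariance, unit, and associativity inherited from those of $X$. The plan is then to construct $\tilde\om$ directly as a natural composite of maps of spectra and take $\om$ as its $(\SI,\OM)$-adjoint; since $\SI$ and $\OM$ are mutually inverse on the stable homotopy category, $\om$ will be a weak equivalence iff $\tilde\om$ is.

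I would define $\tilde\om$ as the composite
\[ \bE \OM X \xrightarrow{\bE(\OM \eta_X)} \bE \OM \OM^\infty \bE X = \bE \OM^\infty \OM \bE X \xrightarrow{\epz_{\OM \bE X}} \OM \bE X, \]
where $\eta_X\colon X \to \OM^\infty \bE X$ is the natural group-completion $\sO$-map from Theorem \ref{add} (so $\OM \eta_X$ is again an $\sO$-map to which $\bE$ can be applied), the middle equality uses that $\OM^\infty$ is the $0$-th space while $\OM$ on spectra is levelwise ($\OM^\infty \OM E = \OM E_0 = \OM \OM^\infty E$), and $\epz_Y\colon \bE \OM^\infty Y \to Y$ is the natural counit from Theorem \ref{add}. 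Naturality in $X$ is immediate from the naturality of each factor.

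To show $\tilde\om$ is a weak equivalence when $X$ is connected, I would argue factor by factor. A connected $\sC$-space is trivially grouplike, so Theorem \ref{add} makes $\eta_X$ a weak equivalence, whence $\OM \eta_X$ is too; since $\bE = B(\SI^\infty, C, -)$ preserves weak equivalences of well-based $\sC$-spaces by standard properties of the two-sided bar construction, $\bE(\OM \eta_X)$ is a weak equivalence. Because $\eta_X$ is a group completion and $\pi_0 X = 0$, we have $\pi_0 \bE X = 0$, so the connective spectrum $\bE X$ is in fact $0$-connected and $\OM \bE X$ is connective. Theorem \ref{add} then makes $\epz_{\OM \bE X}$ a weak equivalence, so the composite $\tilde\om$ is one too.

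The hard part, and arguably the whole point of a conceptual reworking, is recognizing that this purely formal composite suffices: the argument reduces entirely to the adjoint equivalence packaged in Theorem \ref{add}, together with the easy connectivity observation that $\bE X$ is $0$-connected (not merely connective) when $X$ is connected, so $\OM \bE X$ lies in the range where $\epz$ is an equivalence. No explicit manipulation of $\OM$ past the bar construction or the free $\sC$-space monad $C$ should be needed, in contrast to the older proofs that May found impenetrable.
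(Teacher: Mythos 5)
There is a genuine gap at the heart of your construction: you apply $\bE$ to $\OM\eta_X$, asserting parenthetically that $\eta_X\colon X\rtarr \OM^{\infty}\bE X$ is an $\sO$-map. It is not. As the paper is careful to point out in \S\ref{phil} and again in \S\ref{unitspec}, the map $\et\colon X\rtarr B(C,C,X)$ is a map of spaces but \emph{not} a map of $C$-algebras; the group completion $\et\colon X\rtarr \OM^{\infty}\bE X$ of Theorem \ref{add} is only a composite of operad maps with the homotopy \emph{inverse} of an operad map, hence an ``$E_{\infty}$ map'' only after inverting a weak equivalence. Since $\bE = B(\SI^{\infty},C,-)$ is a point-set functor on strict $\sC$-algebras and strict maps, $\bE(\OM\eta_X)$ is simply undefined, and your composite defining $\tilde\om$ does not exist as a natural map of spectra. (A second, smaller point: the identification $\bE\OM\OM^{\infty}\bE X = \bE\OM^{\infty}\OM\bE X$ needs the looped operad action on $\OM\OM^{\infty}\bE X$ to coincide with the intrinsic action on the $0^{th}$ space of $\OM\bE X$; this is true but is exactly the kind of compatibility the monadic lemmas of the paper's proof are there to control.)

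You could repair the argument by replacing $\eta_X$ with the zigzag of $\sC$-maps from Theorem \ref{add}, looping it, applying $\bE$, and using homotopy invariance of the bar construction on the wrong-way arrow; together with your (correct) observations that $\pi_0\bE X = 0$ when $X$ is connected and that $\epz\colon \bE\OM^{\infty}Y\rtarr Y$ is an equivalence for connective $Y$, this proves the weak equivalence statement. But it only produces $\om$ as a map in the homotopy category, whereas the theorem asserts a natural point-set map $\om\colon \SI\bE\OM X\rtarr \bE X$ for \emph{all} $\sO$-spaces $X$, and that is the real content (it is what feeds into Corollary \ref{BGOM} and the consistency statements). Constructing that map is precisely what the paper's proof does, via the monadic interchange $\xi\colon \SI C\rtarr C\SI$ and $\rh\colon \SI Q\rtarr Q\SI$ (Lemmas \ref{Oldup} and \ref{Newup}), the map $B(\id,\tilde\xi,\id)$, and the degreewise application of $\epz\colon \SI\OM\rtarr \text{Id}$ in the bar construction; the equivalence for connected $X$ is then read off from the commuting triangle relating $\tilde\om_0$ to the group completions $\et$ and $\OM\et$. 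So the purely formal reduction to Theorem \ref{add} that you propose does not suffice as stated.
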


We begin with a general result on monads, but stated with notations that suggest our
application.  It is an elaboration of \cite[5.3]{Geo}. The proof is easy diagram chasing.

\begin{lem}\label{Oldup} Let $\sT$ be any category, let $C$ be a monad on $\sT$, and let
$(\SI,\OM)$ be an adjoint pair of endofunctors on $\sT$.  Let $\xi\colon \SI C\rtarr C\SI$
be a monadic natural isomorphism, so that the following diagrams commute.
\[
\xymatrix{
\SI C C \ar[d]_{\SI \mu} \ar[rr]^-{C\xi\com \xi C} & & CC\SI \ar[d]^{\mu\SI} \\
\SI C \ar[rr]_-{\xi} & & C\SI\\}
 \ \ \ \text{and} \ \ \ 
\xymatrix{
& \SI  \ar[dl]_{\SI\et} \ar[dr]^{\et \SI} & \\
\SI C \ar[rr]_{\xi} & & C\SI \\}
\]
\begin{enumerate}[(i)]
\item The functor $\OM C \SI$ is a monad on $\sT$ with 
unit and product the composites 
\[ \xymatrix@1{  \text{Id} \ar[r]^-{\et} &  \OM \SI \ar[r]^-{\OM\et\SI} & \OM C \SI \\} \]
and
\[ \xymatrix@1{ \OM C \SI \OM C \SI  \ar[rr]^-{\OM C\epz C\SI} & &
\OM C C \SI  \ar[rr]^-{\OM \mu \SI} & & \OM C \SI.\\} \]
Moreover, the adjoint $\tilde{\xi}\colon C\rtarr \OM C \SI$ of $\xi$ is a map of monads on $\sT$. 
\item If $(X,\tha)$ is a $C$-algebra, then $\OM X$ is an $\OM C\SI$-algebra with action map
\[ \xymatrix@1{  \OM C\SI \OM X \ar[rr]^-{\OM C\epz} & & \OM C X \ar[rr]^-{\OM \tha} & & \OM X,\\} \]
hence $\OM X$ is a $C$-algebra by pull back along $\tilde{\xi}$.
\item If $(F,\nu)$ is a $C$-functor ($F\colon \sT\rtarr \sV$ for some category $\sV$), then $F\SI$ is an $\OM C\SI$-functor with action transformation
\[\xymatrix@1{
F\SI \OM C \SI \ar[rr]^-{F\epz C\SI} & & F C\SI \ar[rr]^{\nu\SI} & & F\SI, \\} \]
hence $F\SI$ is a $C$-functor by pull back along $\tilde{\xi}$.
\end{enumerate}
If $\al\colon C\rtarr C'$ is a map of monads on $\sT$, then so is $\OM\al \SI\colon \OM C\SI\rtarr
\OM C'\SI$. 
\end{lem}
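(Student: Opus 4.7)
The plan is to verify each of (i), (ii), (iii), and the final naturality claim by direct diagram chasing, exploiting three ingredients: the monad axioms for $C$, the triangle identities for the adjunction $\SI\dashv\OM$, and the two commutative diagrams that express the monadicity of the isomorphism $\xi\colon \SI C\rtarr C\SI$. The lemma is self-contained categorical bookkeeping, so I do not expect any conceptual difficulty; the issue is organizing the diagrams cleanly.

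First I would check that $(\OM C\SI,\mu',\et')$ is a monad, where $\et' = \OM\et\SI\com\et$ and $\mu' = \OM\mu\SI\com \OM C\epz C\SI$. The left unit law $\mu'\com\et'\OM C\SI = \id$ expands to $\OM\mu\SI\com\OM C\epz C\SI\com \OM\et\SI\OM C\SI\com \et\OM C\SI$; the triangle identity $\OM\epz\com\et\OM = \id_{\OM}$ collapses the last three arrows to $\OM\et C\SI$, after which the left unit law for $C$ finishes the job. The right unit law is symmetric and uses the other triangle identity $\epz\SI\com\SI\et = \id_{\SI}$ together with the right unit law for $C$. Associativity of $\mu'$ is obtained by writing out both composites $(\OM C\SI)^3\rtarr \OM C\SI$, using naturality of $\epz$ to slide the middle counit past the adjacent copy of $C$ so that two nested multiplications $\mu$ appear side by side, and then invoking associativity of $\mu$.

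Next I would check that $\tilde{\xi} = \OM\xi\com\et C\colon C\rtarr \OM C\SI$ is a map of monads. Compatibility with units reduces to the second monadicity triangle for $\xi$ (which says $\xi\com\SI\et = \et\SI$); passing to adjoints gives $\tilde{\xi}\com\et = \et'$. Compatibility with multiplications requires $\tilde{\xi}\com\mu = \mu'\com\tilde{\xi}\tilde{\xi}$; one expands both sides, uses naturality of $\et$ and $\epz$ to rearrange, and applies the first monadicity square for $\xi$ (involving $\SI\mu$ and $\mu\SI$) to identify the two. With (i) in hand, the action maps on $\OM X$ in (ii) and on $F\SI$ in (iii) are verified by the same pattern: the unit axiom uses a triangle identity applied inside the ambient monad action or $C$-functor structure, and the associativity axiom uses naturality of $\epz$ together with the associativity of the original action. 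The pulled-back $C$-algebra and $C$-functor structures then arise automatically from the monad map $\tilde{\xi}$ of (i). Finally, for a map of monads $\al\colon C\rtarr C'$ on $\sT$, the natural transformation $\OM\al\SI\colon \OM C\SI\rtarr \OM C'\SI$ preserves $\et'$ and $\mu'$ because $\al$ preserves $\et$ and $\mu$ and because the formulas for $\et'$ and $\mu'$ depend on the coefficient monad only through $\et$ and $\mu$ (with all uses of $\epz$ natural in the functor variable).

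The hard part will be pure bookkeeping: the composites that verify associativity of $\mu'$ and the second monad-map axiom for $\tilde{\xi}$ each contain five or six factors, and one must insert each instance of the counit $\epz$ between the correct adjacent pair of $C$'s before associativity of $\mu$ or monadicity of $\xi$ can be applied. Once the naturality squares are drawn explicitly, each identity reduces to a single appeal to a hypothesis, and the lemma follows.
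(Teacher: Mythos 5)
Your proposal is correct and matches the paper, which simply states that ``the proof is easy diagram chasing'' and leaves exactly the verifications you outline to the reader. The individual steps you describe (naturality of $\et$ and $\epz$, the triangle identities, the two monadicity diagrams for $\xi$, and the monad/algebra axioms for $C$) are precisely the ingredients needed, and your reductions of the unit laws and of the unit condition for $\tilde{\xi}$ are accurate.
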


The relevant examples start with the loop suspension adjunction $(\SI,\OM)$ on $\sT$.

\begin{lem}\label{Newup}  For any (reduced) operad $\sC$ in $\sU$ with associated monad $C$ 
on $\sT$,
there is a monadic natural transformation $\xi\colon \SI C\rtarr C\SI$.  There is also
a monadic natural transformation $\rh\colon \SI Q\rtarr Q\SI$ such that the following
diagram commutes, where $\sC$ is the Steiner operad (or its product with any other operad).
\[  \xymatrix{
\SI C \ar[r]^-{\xi} \ar[d]_{\SI\al} & C\SI \ar[d]^{\al\SI} \\
\SI Q \ar[r]_{\rh} & Q\SI \\} \]
\end{lem}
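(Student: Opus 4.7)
The plan is to construct $\xi$ and $\rh$ by explicit natural formulas, verify monad-compatibility essentially by inspection, and then check that the square commutes by a single calculation that exploits naturality of the Steiner operad action in the target variable.

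For $\xi\colon \SI C \rtarr C\SI$, recall from \S\ref{monad} that $CX = \coprod_{j\geq 0} \sC(j)\times_{\SI_j} X^j/(\sim)$, where $(\sim)$ imposes the basepoint identifications. The pointed, $\SI_j$-equivariant map $X^j \sma S^1 \rtarr (X\sma S^1)^j$ given by $(x_1,\ldots,x_j)\sma s \mapsto (x_1\sma s,\ldots,x_j\sma s)$ descends through orbits and basepoint identifications to the desired map
\[
\xi\colon CX \sma S^1 \rtarr C(X \sma S^1), \qquad [(c;x_1,\ldots,x_j),s] \mapsto [c;x_1\sma s,\ldots,x_j\sma s].
\]
I would note that $\xi$ is typically not an isomorphism. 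The two compatibility diagrams of Definition \ref{moniso} reduce to pointwise identities: the unit diagram says $\xi$ sends $(\id;x)\sma s$ to $(\id;x\sma s)$, which is immediate, and the product diagram is equally immediate because smashing with $s$ is applied uniformly to all input points and commutes with the operadic composition $\ga$.

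For $\rh\colon \SI Q \rtarr Q\SI$, I would take the standard assembly map adjoint to the composite $\SI^{\infty}\SI\OM^{\infty}E \iso \SI \SI^{\infty}\OM^{\infty}E \xrightarrow{\SI\epz} \SI E$ with $E = \SI^{\infty}X$. Equivalently, using $QX = \colim_V \OM^V\SI^V X$, $\rh$ is the colimit over $V\subset U$ of the assembly maps $\SI\OM^V\SI^V X \rtarr \OM^V \SI \SI^V X$ sending $\ga\sma s$ to the loop $v \mapsto \ga(v)\sma s$. Monad-compatibility for $\rh$ follows formally from the triangular identities for $(\SI^{\infty},\OM^{\infty})$ and the naturality of its counit.

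The only substantive step is the commutation of the square involving $\al = \tha\com C\et$, where $\tha$ is the Steiner action on $QX$. Evaluating both composites on $[(c;x_1,\ldots,x_j),s]$, I expect both to equal $\tha(c;\et(x_1\sma s),\ldots,\et(x_j\sma s))$. The verification reduces to two facts. First, $\rh(\et(x)\sma s) = \et(x\sma s)$, which is just the triangular identity read off at the $V=0$ level. Second, and this is the geometric input, the action $\tha\colon \sK_V(j)\times (\OM^V Y)^j \rtarr \OM^V Y$ is natural in $Y$, hence commutes with the assembly map $\OM^V Y \rtarr \OM^V(Y\sma S^1)$ sending $\ga$ to $v\mapsto \ga(v)\sma s$. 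This naturality is built into the defining formula of \S\ref{Steiner}: the $\text{Emb}_V$-action factors through a pinch map $S^V \rtarr \bigvee_j S^V$ that depends only on the embeddings, followed by the $j$-tuple of loops, so any map $Y\rtarr Y'$ commutes with it. The Steiner action, pulled back along $\pi\colon \sK_V \rtarr \text{Emb}_V$, inherits the same naturality. Passage to colimits over $V\subset U$ gives the result for $\sC = \sK_U$; for $\sC\times\sO$ the action on $QX$ is pulled back along the projection to $\sK_U$ and the same argument applies verbatim. The main obstacle is bookkeeping: one must ensure that the colimits defining $Q$, the smash product defining $\SI$, and the orbit quotients defining $C$ all interact correctly at the pointset level, so that the symmetric group quotients pass through smashing with $S^1$ and the assembly maps assemble compatibly as $V$ grows.
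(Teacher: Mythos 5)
Your proposal is correct and is essentially the paper's own argument: the same pointwise formula for $\xi$, the same $\rh$ (the paper writes it as $\rh(f\sma t)(y)=x\sma t\sma z$ where $f(y)=x\sma z$, i.e.\ your assembly map with the suspension coordinate inserted adjacent to $X$ so that it is strictly compatible with the colimit over $V$), and the same verification of the square by expanding $\al=\tha\com C\et$ into the unit compatibility $\rh\com\SI\et=\et\SI$ plus the commutation of $\rh$ with the Steiner action, which holds because the $\text{Emb}_V$-action only reroutes source points through the loops and hence is natural in the target variable. The only difference is one of emphasis: you observe that monadicity of $\rh$ can be seen formally from the adjunctions, where the paper simply records it as a (laborious) diagram chase from the explicit formula.
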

\begin{proof}  For $c\in\sC(j)$, $x_i\in X$, and $t\in I$, we define 
\[ \xi((c;x_1,\cdots,x_j)\sma t) = (c;x_1\sma t,\cdots, x_j\sma t) \]
and check monadicity by diagram chases.   A point $f\in QX$ can be
represented by a map $f\colon S^n\rtarr X\sma S^n$ for $n$ sufficiently 
large and a point of $Q\SI X$ can be represented by a map $g\colon S^n\rtarr X\sma S^1 \sma S^n$.  
We define
\[  \rh(f\sma t)(y) = x\sma t\sma z, \]
where $y\in S^n$ and $f(y) = x\sma z\in X\sma S^n$ and check monadicity by somewhat laborious
diagram chases.  For the diagram, recall that $\al$ is the composite
\[ \xymatrix@1{  CX \ar[r]^{C\et} & CQX \ar[r]^{\tha} & QX \\} \]
and expand the required diagram accordingly to get
\[  \xymatrix{
\SI C \ar[rr]^-{\xi} \ar[d]_{\SI C\et } & & C\SI \ar[d]^{C\et \SI} \ar[dl]_{C\SI\et}\\
\SI CQX \ar[r]^-{\xi Q} \ar[d]_{\SI \tha} & C\SI QX \ar[r]^-{C\rh} & CQ\SI X \ar[d]^{\tha \SI} \\
\SI Q \ar[rr]_-{\rh} & &  Q\SI. \\} \]
The top left trapezoid is a naturality diagram and the top right triangle is easily seen
to commute by checking before application of $C$.  The bottom rectangle requires going 
back to the definition of the action $\tha$, but it is easily checked from that.
\end{proof} 

For any operad $C$, the action $\tilde{\tha}$ of $C$ on $\OM X$ induced via Lemma \ref{Oldup}(ii)
from an action $\tha$ of $C$ on $X$ is given by the obvious pointwise formula
\[   \tilde{\tha} (c;f_1,\cdots, f_j)(t) = \tha(c;f_1(t),\cdots, f_j(t)) \]
for $c\in\sC(j)$ and $f_i\in \OM X$.  The conceptual description leads to the
following proof.

\begin{proof}[Proof of Theorem \ref{OME2}]  For a spectrum $E$, $(\OM E)_0 = \OM(E_0)$, and 
it follows that we have a natural isomorphism of adjoints
$\chi\colon \SI \SI^{\infty} \rtarr \SI^{\infty}\SI$.  We claim that this is an
isomorphism of $C$-functors, where the action of $C$ on the right is given by
Lemma \ref{Oldup}(iii).  To see this, we recall that the action of $C$ on $\SI^{\infty}$ is
the composite
\[ \xymatrix@1{
\SI^{\infty}C \ar[r]^-{\SI^{\infty}\al} & \SI^{\infty} Q = 
\SI^{\infty}\OM^{\infty}\SI^{\infty}  \ar[r]^-{\epz\SI^{\infty}} 
& \SI^{\infty}\\} \]
and check that the following diagram commutes.
\[\xymatrix{
\SI\SI^{\infty} C \ar[r]^-{\ch C} \ar[d]_{\SI\SI^{\infty}\al} &
\SI^{\infty}\SI C \ar[d]_{\SI^{\infty}\SI\al} \ar[r]^{\SI^{\infty}\xi} &
\SI^{\infty} C\SI \ar[d]^{\SI^{\infty}\al\SI} \\
\SI\SI^{\infty} Q \ar[r]^-{\ch Q} \ar[d]_{\SI\epz} &
\SI^{\infty}\SI Q \ar[r]^{\SI^{\infty}\rh} &
\SI^{\infty} Q\SI \ar[d]^{\epz\SI^{\infty}\SI} \\
\SI\SI^{\infty} \ar[rr]_-{\ch} & & \SI^{\infty}\SI\\} \]
The top left square is a naturality diagram and the top right square is $\SI^{\infty}$
applied to the diagram of Lemma \ref{Newup}.  The bottom rectangle is another chase. 
The functor $\SI$ on spectra commutes with geometric realization, and there results
an identification
\begin{equation}\label{yes1} \SI \bE \OM X = \SI B(\SI^{\infty}, C, \OM X)
\iso B(\SI^{\infty}\SI , C, \OM X). \end{equation}
The action of $C$ on $\OM X$ is given by Lemma \ref{Oldup}(ii), and we have a map
\begin{equation}\label{yes2} B(\id,\tilde{\xi}, \id)\colon B(\SI^{\infty}\SI , C, \OM X)
\rtarr B(\SI^{\infty}\SI , \OM C\SI, \OM X). \end{equation}
The target is the geometric realization of a simplicial spectrum with $q$-simplices
\[\SI^{\infty}\SI(\OM C\SI)^q\OM X = \SI^{\infty}(\SI \OM C)^q\SI\OM X. \]
Applying $\epz\colon \SI\OM \rtarr \text{Id}$ in the $q+1$ positions, we obtain maps
\[ \SI^{\infty}(\SI \OM C)^q\SI\OM X\rtarr \SI^{\infty}C^q X. \]
By further diagram chases showing compatibility with faces and degeneracies, 
these maps specify a map of simplicial spectra. Its geometric realization is a map
\begin{equation}\label{yes3} B(\SI^{\infty}\SI , \OM C\SI, \OM X)\rtarr B(\SI^{\infty} , C, X) = \bE X. 
\end{equation}
Composing (\ref{yes1}), (\ref{yes2}), and (\ref{yes3}), we have the required map of spectra
\[ \om\colon \SI \bE \OM X \rtarr \bE X. \]
Passing to adjoints and $0^{th}$ spaces, we find that the following diagram commutes.
\[ \xymatrix{
& \OM X \ar[dl]_{\et} \ar[dr]^{\OM \et} & \\
\bE_0\OM X \ar[rr]_{\tilde{\om}_0} && \OM \bE_0 X. \\}\]
Since $\et$ is a group completion in general, both $\et$ and $\OM\et$ in the diagram
are equivalences when $X$ is connected and therefore $\om$ is then an equivalence.
\end{proof}

As was observed in \cite[3.4]{MayPer}, 
if $G$ is a monoid in $\sO[\sT]$, then $BG$ is an $\sO$-space such that $G\rtarr \OM BG$ 
is a map of $\sO$-spaces.  Since $(\OM\bE X)_1 = \OM \bE_1 X \iso \bE_0 X$,
Theorem \ref{OME2} has the following consequence \cite[3.7]{MayPer}.  

\begin{cor}\label{BBG}\label{BGOM}  If $G$ is a monoid in $\sO[\sT]$, Then $\bE G$, 
$\bE\OM BG$, and $\OM \bE BG$ are naturally equivalent spectra. Therefore the first delooping 
$\bE_1G$ and the classical classifying space $BG\htp \bE_0BG$ are equivalent as $\sO$-spaces.  
\end{cor}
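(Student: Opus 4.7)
The plan is to chain together the classical equivalence $G \simeq \OM BG$ of $\sO$-spaces, the additive infinite loop space functor $\bE$, and Theorem \ref{OME2}, then read off the space-level statement by extracting $1^{st}$ spaces from the resulting spectrum equivalences.

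First I would invoke the cited result \cite[3.4]{MayPer} to equip $BG$ with its natural $\sO$-space structure and to exhibit the canonical map $G \rtarr \OM BG$ as a map of $\sO$-spaces. This map is a group completion in the sense of the previous section: on $\pi_0$ it is the Grothendieck construction, and homologically the bar/loop comparison gives the required localization. Since any additive infinite loop space machine inverts group completions (Theorem \ref{add}), applying $\bE$ turns $G \rtarr \OM BG$ into a weak equivalence of spectra $\bE G \rtarr \bE\OM BG$.

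Next I would apply Theorem \ref{OME2} to the connected $\sO$-space $X = BG$ (connectedness of $BG$ is classical) to obtain the natural weak equivalence $\tilde{\om}\colon \bE\OM BG \rtarr \OM \bE BG$. Composing with the previous step yields the chain of natural weak equivalences
\[ \bE G \htp \bE \OM BG \htp \OM \bE BG, \]
which is the first assertion.

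For the second assertion I would pass to $1^{st}$ spaces. The identity $(\OM \bE BG)_1 = \OM\bE_1 BG \iso \bE_0 BG$ recorded just before the corollary (it is the $\OM$-spectrum property of $\bE BG$) turns the spectrum level equivalence into $\bE_1 G \htp \bE_0 BG$. This is an equivalence of $\sO$-spaces because every arrow in the chain is either a map of $\sO$-spaces to begin with ($G \rtarr \OM BG$) or a map of spectra whose level maps automatically respect the $\sO$-structure on the relevant spaces. Finally, since $BG$ is connected and hence grouplike, the group completion $\et\colon BG \rtarr \bE_0 BG$ is itself a weak equivalence of $\sO$-spaces by Theorem \ref{add}, completing the identification $\bE_1 G \htp BG$ as $\sO$-spaces.

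The hard part is already behind us in Theorem \ref{OME2}; the only residual obstacle is bookkeeping, namely verifying that each equivalence is one of $\sO$-spaces rather than merely of underlying spaces and that the indexing conventions are handled correctly, so that the $1^{st}$ space of $\OM \bE BG$ is $\bE_0 BG$ and not a higher delooping.
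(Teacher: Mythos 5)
Your argument is correct and is essentially the paper's own: cite \cite[3.4]{MayPer} for the $\sO$-structure on $BG$ and the $\sO$-map $G\rtarr \OM BG$, use that this map is a group completion (so $\bE$ converts it to an equivalence, since $G$ is homotopy commutative and $\et$ is a group completion by Theorem \ref{add}), apply Theorem \ref{OME2} to the connected space $BG$, and read off first spaces via $(\OM\bE BG)_1 = \OM\bE_1 BG\iso \bE_0 BG$ together with $BG\htp \bE_0 BG$ for grouplike $BG$. Your write-up just makes explicit the group-completion and bookkeeping steps that the paper leaves implicit in the sentence preceding the corollary.
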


\end{document}